\documentclass[reqno,11pt]{amsart}
\usepackage{amsmath, latexsym, amsfonts, stmaryrd, amssymb, amsthm, amscd, array, caption}
\usepackage[toc,page]{appendix}
\usepackage[utf8]{inputenc}
\usepackage{graphics,epsf,psfrag,epsfig}
\usepackage[colorlinks=true, pdfstartview=FitV, linkcolor=blue, citecolor=blue, urlcolor=blue,pagebackref=false]{hyperref}
\usepackage{bm}

\usepackage[showlabels,sections,floats,textmath,displaymath]{}

\setlength{\oddsidemargin}{0mm}
\setlength{\evensidemargin}{0mm}
\setlength{\textwidth}{160mm}
\setlength{\headheight}{0mm}
\setlength{\headsep}{12mm}
\setlength{\topmargin}{0mm}
\setlength{\textheight}{220mm}

\numberwithin{equation}{section}
\setcounter{secnumdepth}{3}
\setcounter{tocdepth}{2}

\newtheorem{theorem}{Theorem}[section]
\newtheorem{lemma}[theorem]{Lemma}
\newtheorem{proposition}[theorem]{Proposition}

\newtheorem{rem}[theorem]{Remark}
\newtheorem{claim}[theorem]{Claim}

\renewenvironment{proof}[1][Proof]{\begin{trivlist}
\item[\hskip \labelsep {\bfseries #1}]}{\qed\end{trivlist}}

\newcommand{\ind}{\mathbf{1}}
\renewcommand{\ge}{\geq}
\renewcommand{\le}{\leq}

\renewcommand{\tilde}{\widetilde}
\renewcommand{\hat}{\widehat}

\DeclareMathSymbol{\leqslant}{\mathalpha}{AMSa}{"36} 
\DeclareMathSymbol{\geqslant}{\mathalpha}{AMSa}{"3E} 
\DeclareMathSymbol{\eset}{\mathalpha}{AMSb}{"3F}     
\renewcommand{\leq}{\;\leqslant\;}                   
\renewcommand{\geq}{\;\geqslant\;}                   
\newcommand{\dd}{\,\text{\rm d}}             


\newcommand{\bP}{{\ensuremath{\mathbf P}} }
\newcommand{\bE}{{\ensuremath{\mathbf E}} }

\newcommand{\bbN}{{\ensuremath{\mathbb N}} }

\newcommand{\bbR}{{\ensuremath{\mathbb R}} }

\newcommand{\bbZ}{{\ensuremath{\mathbb Z}} }


\newcommand{\ga}{\alpha}

\newcommand{\gd}{\delta}
\newcommand{\gep}{\varepsilon}       
\newcommand{\gp}{\varphi}

\makeatletter
\def\captionfont@{\footnotesize}
\def\captionheadfont@{\scshape}

\long\def\@makecaption#1#2{%
  \vspace{2mm}
  \setbox\@tempboxa\vbox{\color@setgroup
    \advance\hsize-6pc\noindent
    \captionfont@\captionheadfont@#1\@xp\@ifnotempty\@xp
        {\@cdr#2\@nil}{.\captionfont@\upshape\enspace#2}%
    \unskip\kern-6pc\par
    \global\setbox\@ne\lastbox\color@endgroup}%
  \ifhbox\@ne 
    \setbox\@ne\hbox{\unhbox\@ne\unskip\unskip\unpenalty\unkern}%
  \fi
  \ifdim\wd\@tempboxa=\z@ 
    \setbox\@ne\hbox to\columnwidth{\hss\kern-6pc\box\@ne\hss}%
  \else 
    \setbox\@ne\vbox{\unvbox\@tempboxa\parskip\z@skip
        \noindent\unhbox\@ne\advance\hsize-6pc\par}%
\fi
  \ifnum\@tempcnta<64 
    \addvspace\abovecaptionskip
    \moveright 3pc\box\@ne
  \else 
    \moveright 3pc\box\@ne
    \nobreak
    \vskip\belowcaptionskip
  \fi
\relax
}
\makeatother
\def\writefig#1 #2 #3 {\rlap{\kern #1 truecm
\raise #2 truecm \hbox{#3}}}


\newcommand{\simn}{\stackrel{n\to+\infty}{\sim}}

\newcommand{\simx}{ \stackrel{x\to +\infty}{\sim}}


\begin{document}

\title[Random Walks in the Cauchy domain of attraction]{Notes on Random Walks\\ in the Cauchy domain of attraction}

\author[Q. Berger]{Quentin Berger}
\address{LPSM, Sorbonne Universit\'e\\
Campus  Pierre et Marie Curie, case 188\\
4 place Jussieu, 75252 Paris Cedex 5, France}
\email{quentin.berger@sorbonne-universite.fr}

\date{}

\begin{abstract}
The goal of these notes is to fill some gaps in the literature about random walks in the Cauchy domain of attraction, which has often been left aside because of its additional technical difficulties. We prove here several results in that case: a Fuk-Nagaev inequality and a local version of it; a large deviation theorem; two types of local large deviation theorems. We also derive two important applications of these results: a sharp estimate of the tail of the first ladder epochs, and renewal theorems.
Most of our techniques carry through to the case of random walks in the domain of attraction of an $\alpha$-stable law with $\alpha \in(0,2)$, so we also present results in that case---some of them are improvement of the existing literature.
   \\[3pt]
  2010 \textit{Mathematics Subject Classification: 60G50, 60F05, 60F10}
  \\[3pt]
  \textit{Keywords:  Random Walk, Cauchy Domain of Attraction, Stable Distribution, Local Large Deviations, Ladder Epochs, Renewal Theorem, Fuk-Nagaev Inequalities.}
\end{abstract}

\maketitle

\section{Introduction}

This paper initiated when, consulting some colleagues about random walks in the Cauchy domain of attraction, they all shared the same observation that this case was often left aside in the literature, and that many very natural results were not proven (to the best of our knowledge). These notes therefore aim at filling as many gaps as possible.


\subsection{Setting and first notations}
\label{sec:notations}

Let $(X_i)_{i\ge 1}$ be a sequence of i.i.d.\ $\mathbb{Z}$-valued 
 random variables. We denote $S_n:= \sum_{i=1}^n X_i$, and $M_n:= \max_{1\leq i\leq n} \{ X_i\}$.
We assume that $(S_n)_{n\ge 0}$ is in the domain of attraction of an $\ga$-stable distribution, with $\alpha\in(0,2)$. We will put emphasis on the  case $\ga=1$, but we introduce notations in the general case.

We assume that there is some $\ga \in(0,2)$ and some slowly varying function $L(\cdot)$, such that as $x\to+\infty$, $\bP(|X_1|>x) \sim L(x) x^{-\ga}$ and
 \begin{equation}
\label{def:tail} 
 \bP(X_1>x) \sim p L(x)\, x^{-\ga} \,, \qquad \bP(X_1< -x) \sim q L(x)\, x^{-\ga} , 
 \end{equation}
with $ p+q=1$.  If $p=0$ (or $q=0$), then we interpret \eqref{def:tail} as $ o( L(x) x^{-\alpha})$. We stress that the condition~\eqref{def:tail} is equivalent to $(S_n)_{n\ge 0}$ being in the domain of attraction of an $\ga$-stable distribution (see \cite[IX.8, Eq.~(8.14)]{cf:Feller})

We define $a_n$ the scaling sequence, characterized up to asymptotic equivalence by the following relation ($\ga\in (0,2)$)
\begin{equation}
\label{def:an}
L(a_n) (a_n)^{-\ga} \sim \frac1n \quad \text{as } n\to\infty\, .
\end{equation}
We also define the centering sequence, that we denote $b_n$. We set $\mu:=\bE[X_1]$ if $X_1$ is integrable (if $X_1$ is not integrable we abbreviate it as $|\mu|=+\infty$), and we let
\begin{equation}
\label{def:bn}
\begin{split}
&b_n\equiv 0 \quad \text{if } \ga\in(0,1)\, ; \qquad \qquad  b_n =  n \mu \quad \text{if } \ga>1\, ;\\
&b_n = n \mu(a_n) \quad \text{with  } \mu(x) = \bE \big[X_1  \ind_{\{ |X_1|\leq x \}} \big] \quad \text{if } \ga=1 \, .
\end{split}
\end{equation}

We then have that $(S_n-b_n)/a_n$ converges in distribution to a non-trivial $\ga$-stable distribution, see e.g.\ \cite[IX.8]{cf:Feller} (in particular p.~315 (8.15) for the centering). We denote $g(\cdot)$ the density of the limiting distribution, which is continuous and bounded.
Under these assumptions,  Gnedenko's local limit theorem gives (see e.g.\ \cite[Ch.~9, \S50]{cf:Gnedenko}):
\begin{equation}
\label{eq:LLT}
\sup_{x\in\bbZ^d} \Big| a_n \bP\big(S_n  = x \big) - g\Big( \frac{x-b_n}{a_n} \Big) \Big| \stackrel{n\to+\infty}{\longrightarrow} 0 \, .
\end{equation}
This is sharp in the range when $|x-b_n|$ is of order $a_n$, but does not give much information when $|x-b_n|/a_n \to +\infty$: one aim of our paper is to provide large and local large deviations estimates, in particular in the case $\ga=1$ which was left aside in many cases.

\subsection{Organization of the paper and outline of the results}
Let us now present a brief overview of the paper.

In Section \ref{sec:largedev}, we present large and local large deviation results. First, our Theorem~\ref{thm:largedev} gives a standard large deviation estimate which seemed to be missing in the case $\ga=1$ in full generality. We provide a Fuk-Nagaev inequality and a local version of it (Theorem~\ref{thm:FukNagalpha1}), which is a cornerstone of our paper. We derive a local large deviation Theorem~\ref{thm:localLD} that extends Caravenna and Doney's result \cite[Thm.~1.1]{cf:CD} to the case $\ga=1$. Furthermore, with an additional locality assumption on the distribution of $X_1$, we provide an improved local large deviation Theorem~\ref{thm:localDoney}, extending that of Doney \cite[Thm.~A]{cf:Don97} to the case $\ga\in[1,2)$.

In Section \ref{sec:appli}, we give applications of these results. First, we consider the first descending ladder epoch $T_- = \inf \{ n ; S_n <0\}$, and give a sharp asymptotic of $\bP(T_- >n)$ in the case $\ga=1$ with infinite mean (Theorem~\ref{thm:ladder}). We also treat the finite mean case (Theorem~\ref{thm:ladder2}, done in \cite{cf:Don89} for $\ga\neq 1$). Some subtleties arise in the case $\ga=1$ with $\mu=0$, that we collect in Theorem~\ref{thm:laddermu0}.
Second, we consider renewal theorems for transient random walks: in the case $\ga=1$, we give sufficient conditions for the random walk to be transient, and we give the asymptotics of the Green function $G(x) = \sum_{n=0}^{+\infty} \bP(S_n =x)$ as $x\to+\infty$, see Theorem~\ref{thm:renewcentered} in the ``centered'' case $p=q$ and Theorem~\ref{thm:renewal} in the infinite mean case with $p\neq q$. We also present the result in the case of finite non-zero mean (Theorem~\ref{thm:renewalfinite}).

In Section \ref{sec:alpha1}, we develop further on the case $\ga=1$. In particular, we discuss the  question of the transience/recurrence of the random walk when $|\mu|=+\infty$: this is actually quite subtle, since the random walk, even if it may go to $+\infty$ or $-\infty$ in probability, is shown not to drift to $+\infty$ or $-\infty$ (at least when $p,q\neq 0$, see Proposition~\ref{prop:drift}). We give sufficient conditions for the random walk to be transient in the ``centered'' case $p=q$ (Proposition~\ref{prop:transient1}) or in the case $p\neq q$ (Proposition~\ref{prop:transient2}).
We also provide useful estimates on slowly varying functions (more precisely de Haan functions) related to the case $\ga=1$.

All the proofs are collected in Sections \ref{sec:FukNagaev}-\ref{sec:locallarge}-\ref{sec:ladder}-\ref{sec:renewals}.
In Section \ref{sec:FukNagaev} we state and prove Fuk-Nagaev inequalities, which are the central tool for proving the large and local large deviation estimates, that are derived in Section~\ref{sec:locallarge}.
In Section~\ref{sec:ladder}, we focus on  the ladder epochs theorems  in the case $\ga=1$ with $|\mu|=+\infty$ (and then adapt the proof to the case $\mu=0$).
In Section~\ref{sec:renewals}, we prove the renewal theorems, first in the case $\ga=1$ with infinite mean, and then in the finite mean case.

\begin{rem}
Let us mention that the assumption that $X_1 \in \mathbb{Z}$ is not essential, but is used for the sake of the presentation (mostly for statements of local results such as \eqref{eq:LLT}). For instance, if $X_1$ is \emph{non-lattice} (\textit{i.e.}\ if it is not supported by some lattice $a+b\mathbb{Z}$), then \eqref{eq:LLT} is replaced by Stone's local limit theorem~\cite{cf:Stone}: for any $h>0$
\begin{equation}
\label{eq:llt2}
\sup_{x\in\bbZ^d} \Big| \frac{a_n}{h} \bP\big(S_n  \in [x,x+h) \big) -  g\Big( \frac{x-b_n}{a_n} \Big) \Big| \stackrel{n\to+\infty}{\longrightarrow} 0 \, .
\end{equation}
All the rest of the paper can therefore easily be adapted to the case of a non-lattice distribution, using~\eqref{eq:llt2} in place of~\eqref{eq:LLT}---the only results needing some adaptations are  local results, for instance Theorem~\ref{thm:FukNagalpha1}-\eqref{eq:localLD}, Theorems~\ref{thm:localLD}-\ref{thm:localDoney} or Theorems~\ref{thm:renewcentered}-\ref{thm:renewal}-\ref{thm:renewalfinite}.
\end{rem}

\section{Large and local large deviations}
\label{sec:largedev}

Let us begin by stating a large deviation theorem which is standard when $\ga \neq 1$ (see \cite{cf:Nag79} and references therein, also \cite{cf:CH}), but appears to be missing in the case $\ga=1$.
\begin{theorem}
\label{thm:largedev}
Assume that \eqref{def:tail} holds with $\ga\in(0,2)$, and define $a_n$ as in \eqref{def:an} and $b_n$ as in \eqref{def:bn}. Then,
\[
\begin{split}
\bP(S_n - b_n > x) \sim n p L(x) x^{-\ga}, \qquad \text{ as } x/a_n \to +\infty \, ,\\
\bP(S_n - b_n <-  x) \sim n q L(x) x^{-\ga}, \qquad \text{ as } x/a_n \to +\infty \, .
\end{split}
\]
(If $p=0$ or $q=0$, one interpret this as $o(n L(x) x^{-\ga})$.)
\end{theorem}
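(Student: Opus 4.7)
The strategy is the classical \emph{one big jump} heuristic: when $x/a_n\to+\infty$, the event $\{S_n-b_n>x\}$ is typically realized by a single summand $X_i$ being of order $x$, while the remaining $n-1$ summands contribute only their typical $O(a_n)\ll x$ fluctuations. I describe only the upper tail; the lower tail follows identically after replacing each $X_i$ by $-X_i$ (which swaps $p$ and $q$). A recurring observation is that $nL(x)x^{-\ga}\to 0$ in this regime: indeed $nL(a_n)a_n^{-\ga}\to 1$ by \eqref{def:an}, and $(L(x)/L(a_n))(a_n/x)^{\ga}\to 0$ by Potter bounds since $x/a_n\to\infty$.

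For the lower bound I fix $\eta>0$ and apply Bonferroni to $A_i=\{X_i>(1+\eta)x\}$:
\[
\bP(S_n-b_n>x)\;\geq\; n\,\bP(X_1>(1+\eta)x)\,\bP(S_{n-1}-b_n>-\eta x)-\binom{n}{2}\bP(X_1>(1+\eta)x)^2.
\]
The conditional factor tends to $1$ because $(S_{n-1}-b_n)/a_n$ is tight---using $b_{n-1}-b_n=o(a_n)$, immediate for $\ga\neq 1$ and a de Haan estimate from Section~\ref{sec:alpha1} for $\ga=1$---and $\eta x/a_n\to\infty$. The quadratic correction is $O((nL(x)x^{-\ga})^2)=o(nL(x)x^{-\ga})$. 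Letting $\eta\downarrow 0$ and using slow variation of $L$ yields $\bP(S_n-b_n>x)\geq (1+o(1))\,npL(x)x^{-\ga}$.

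For the upper bound I split on the largest jump: with $M_n=\max_i X_i$ and $\gep>0$ small,
\[
\bP(S_n-b_n>x)\;\leq\; \bP(M_n\leq\gep x,\,S_n-b_n>x)+\sum_{i=1}^n\bP(X_i>\gep x,\,S_n-b_n>x).
\]
The first piece is handled by the Fuk-Nagaev inequality of Theorem~\ref{thm:FukNagalpha1}, applied to the truncations $Y_i=X_i\ind_{\{|X_i|\leq\gep x\}}$: once the mean-shift $b_n-n\bE[Y_1]$ is shown to be $o(x)$, the bound takes the form $(C\,nL(\gep x)(\gep x)^{-\ga})^{1/\gep}=o(nL(x)x^{-\ga})$ for $\gep$ small enough. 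For each summand in the sum, conditioning on $X_i$ gives the integral $\int_{\gep x}^\infty\bP(X_1\in du)\,\bP(S_{n-1}-b_n>x-u)$, which I split at $u=(1\pm\eta)x$. The tail $u\geq(1+\eta)x$ contributes at most $\bP(X_1>(1+\eta)x)$; the middle strip $u\in[(1-\eta)x,(1+\eta)x]$ contributes at most $\bP(X_1\in[(1-\eta)x,(1+\eta)x])\sim pL(x)x^{-\ga}\bigl((1-\eta)^{-\ga}-(1+\eta)^{-\ga}\bigr)$, which vanishes as $\eta\downarrow 0$; and the range $u\in[\gep x,(1-\eta)x]$ uses $\bP(S_{n-1}-b_n>\eta x)\leq \bP(M_{n-1}>\gep x)+(\text{Fuk-Nagaev})=O(nL(x)x^{-\ga})$ which, combined with $\bP(X_1>\gep x)=O(L(x)x^{-\ga})$, gives a contribution of order $(nL(x)x^{-\ga})^2/n$ per summand. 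Summing over $i$ (factor $n$) and letting $\eta\downarrow 0$ then $\gep\downarrow 0$ produces the matching upper bound.

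The main obstacle is the case $\ga=1$: the centering $b_n=n\mu(a_n)$ is itself a drifting, slowly varying quantity, and the truncation argument forces a re-centering at $n\mu(\gep x)$. Controlling the de Haan difference $n\bigl(\mu(\gep x)-\mu(a_n)\bigr)$ and verifying it is $o(x)$ is not automatic and relies on the slowly varying estimates of Section~\ref{sec:alpha1}; without this input the Fuk-Nagaev step would produce a spurious drift potentially of order $x$. All other ingredients are essentially the same as in the classical treatment of $\ga\neq 1$ (as in \cite{cf:Nag79} and references therein).
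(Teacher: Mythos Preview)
Your proof is correct and follows essentially the same one-big-jump strategy as the paper: the lower bound via Bonferroni is identical, and your upper-bound decomposition (split on $M_n\le \gep x$ then integrate over the value of the large jump in three ranges) is a minor variant of the paper's split at $M_n>(1-\gep)x$, $M_n\in(x/8,(1-\gep)x]$, $M_n\le x/8$, with the same Fuk--Nagaev input and the same tightness argument for the residual $\bP(S_{n-1}-b_n>\eta x)$. You have also correctly isolated the only genuinely new difficulty, namely the de~Haan control of $n(\mu(\gep x)-\mu(a_n))$ in the case $\ga=1$, which is exactly what the paper handles via Claim~\ref{claim:mu} inside Proposition~\ref{thm:alpha1}; note only that for $\ga\neq 1$ you should cite Theorem~\ref{thm:fuknag} rather than Theorem~\ref{thm:FukNagalpha1}.
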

This result  asserts that the large deviation is realized by a so-called \textit{one-jump} strategy (see  \cite{cf:DDS08} for a general setting). For the case $\ga=1$, \cite{cf:Nag79} or \cite{cf:DDS08} appear to be giving the correct behavior only when the step distribution is sufficiently centered, that is $\sup_{n} |b_n|/a_n <+\infty$ (see condition (27) in \cite{cf:DDS08}), or when  $x\ge \gd b_n$ for some $\gd>0$. The contribution of Theorem~\ref{thm:largedev} is therefore to extend the result in the case $\ga=1$ to the whole range $|x|/a_n \to+\infty$, without any restriction on $b_n$. 
In Section~\ref{sec:FukNagaev}, we  recall the central tool to prove this theorem, the so-called Fuk-Nagaev inequalities (we prove a new one in the case $\ga=1$): we then prove Theorem~\ref{thm:largedev} as a simple consequence of these inequalities.

\subsection{Local large deviations}

As far as a local version of Theorem \ref{thm:largedev} is concerned, results can be found in \cite[\S9]{cf:DDS08} in the ``centered'' case $\sup_{n} |b_n|/a_n <+\infty$.
Recently, Caravenna and Doney \cite[Thm.~1.1]{cf:CD} gave an improved local limit estimate in the case $\ga\in (0,1)\cup(1,2)$ (assuming $\mu=0$ when $\ga\in(1,2)$): given $\gamma>0$, they prove that there is some constant $C_0=C_0(\gamma)<+\infty$ such that for all $x\ge 0$,
\[\bP(S_n =x , M_n \le \gamma x) \le \frac{C_0}{a_n} \, \big( n \bP(X_1>x) \big)^{\lceil 1/\gamma \rceil}.\]
We extend their result to the case $\ga=1$:
we give a Fuk-Nagaev inequality and a local version of it, which are new.

\begin{theorem}
\label{thm:FukNagalpha1}
Assume that \eqref{def:tail} holds with $\ga=1$. For every $\gep>0$ there exist constants $c_1,c_2,c_3>0$ such that for any $1 \le y \le x$ and  $x\ge  a_n$, we have
 \begin{gather}
 \bP( S_n -  b_n  \ge x  ; M_n \le y) \le  e^{\frac xy}\Big( 1+ \frac{c_1 x}{ n L(y)} \Big)^{ - (1-\gep) \frac xy} +  e^{ - c_2\big( x/a_n \big)^{1/\gep} } \, 
 \label{eq:fuknag1}\\
 c_3 a_n \times \bP( S_n - \lfloor b_n \rfloor = x  ; M_n \le y)   \le e^{\frac{x}{y}} \Big( 1+ \frac{c_1 x}{ n L(y)} \Big)^{ -(1-\gep) \frac{x}{2y}} +  e^{ - c_2 \big( x/a_n \big)^{1/\gep} } \, . 
 \label{eq:localLD}
 \end{gather}
 This can be improved if we assume that $X_1$ has a symmetric distribution (see \eqref{eq:symmetric}), or that $X_1\geq 0$ (see Proposition~\ref{thm:alpha1}).
\end{theorem}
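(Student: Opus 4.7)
The strategy is the classical Fuk-Nagaev truncation/Cramér-Chernoff scheme, adapted to the specifics of $\ga=1$, where the centering $b_n = n\mu(a_n)$ involves the truncated mean at scale $a_n$ while the natural exponential-moment cutoff is at scale $y$. First, on $\{M_n \le y\}$ every $X_i$ is at most $y$, and since the extreme negative part $X_i\ind_{\{X_i<-y\}}$ is non-positive, one has $S_n \le \hat S_n := \sum_{i=1}^n X_i \ind_{\{|X_i|\le y\}}$ on this event. Writing $\Delta_n(y) := n(\mu(y)-\mu(a_n))$, this yields
\[
\bP(S_n - b_n \ge x,\, M_n \le y) \le \bP\bigl(\hat S_n - n\mu(y) \ge x - \Delta_n(y)\bigr).
\]

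I would then apply Markov's inequality to $e^{\lambda(\hat S_n - n\mu(y))}$ for $\lambda>0$ and bound the centered moment generating function. Using the elementary inequality $e^u-1-u\le u^2 e^{u^+}$, integration by parts and the tail bound $\bP(|X_1|>t)\le C L(t)/t$ together with Potter bounds for the slowly varying function $L$, one obtains, for $0<\lambda\le C/y$,
\[
\log \bE\bigl[e^{\lambda(\hat X_1-\mu(y))}\bigr] \;\le\; C\, L(y)\, \frac{e^{\lambda y}-1-\lambda y}{y}.
\]
Plugging this into the Cramér bound and choosing $\lambda=\tfrac{1-\gep}{y}\log\bigl(1+c\, x'/(nL(y))\bigr)$ with $x':=x-\Delta_n(y)$ produces the Fuk-Nagaev form
\[
\bP\bigl(\hat S_n - n\mu(y) \ge x'\bigr) \le e^{C x'/y}\Bigl(1+\frac{c_1 x'}{nL(y)}\Bigr)^{-(1-\gep)\, x'/y}.
\]

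The delicate ingredient, peculiar to $\ga=1$, is controlling the shift $\Delta_n(y)$, since $\mu$ is no longer asymptotically constant but a de Haan slowly varying function. Integration by parts together with \eqref{def:tail} yields $|\Delta_n(y)|\le C n\int_{a_n}^{y\vee a_n} L(t)/t\,dt \le C' a_n \log_+(y/a_n)(1+o(1))$, using $nL(a_n)\sim a_n$; the refined statements will be collected in Section~\ref{sec:alpha1}. If $|\Delta_n(y)|\le x/2$ then $x'\ge x/2$ and the Fuk-Nagaev bound above, with constants adjusted, yields the first term of \eqref{eq:fuknag1}. Otherwise $y\gtrsim a_n\exp(c x/a_n)$; we then apply the same Fuk-Nagaev estimate at a smaller truncation level $y'<y$ chosen so that $|\Delta_n(y')|\le x/2$, together with the elementary bound $\bP(y'<M_n\le y)\le n\bP(X_1>y')$. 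Optimizing $y'$ (roughly of the form $y'=a_n (x/a_n)^{\gep'}$ for a suitable $\gep'=\gep'(\gep)$) produces the correction term $e^{-c_2(x/a_n)^{1/\gep}}$.

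Finally, the local version \eqref{eq:localLD} follows via a standard halving argument. Splitting $S_n=S'+S''$ into two independent i.i.d.\ sums of lengths $\lceil n/2\rceil$, $\lfloor n/2\rfloor$, at least one of $S',S''$ must exceed half of $k:=x+\lfloor b_n\rfloor$, so
\[
\bP(S_n=k,\, M_n\le y) \le 2\sum_{j\ge k/2} \bP(S'=j,\, M'\le y)\,\bP(S''=k-j,\, M''\le y).
\]
Gnedenko's local limit theorem \eqref{eq:LLT} gives $\bP(S''=k-j)\le C/a_{n/2}\le C'/a_n$, whence $\bP(S_n=k,\, M_n\le y) \le (C'/a_n)\bP(S'\ge k/2,\, M'\le y)$. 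Since $k/2-b_{n/2}=x/2+O(a_n)$ (the discrepancy $\lfloor b_n\rfloor/2-b_{n/2}$ being $O(a_n)$ by the de Haan property of $\mu$), this reduces to the non-local bound \eqref{eq:fuknag1} applied to $S'$ with deviation of order $x/2$, which explains the exponent $x/(2y)$ in place of $x/y$. The main obstacle in the whole argument is the careful management of $\Delta_n(y)$: unlike in the case $\ga\in(0,1)\cup(1,2)$ this shift is unbounded in $y/a_n$, and it is precisely what forces the appearance of the correction term in both \eqref{eq:fuknag1} and \eqref{eq:localLD}.
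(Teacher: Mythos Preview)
Your halving argument for the local bound \eqref{eq:localLD} is correct and matches the paper's Proposition~\ref{thm:localfuknag}. The gap is in your derivation of \eqref{eq:fuknag1}, specifically in the control of the shift $\Delta_n(y)=n(\mu(y)-\mu(a_n))$ and in the origin of the correction term $e^{-c_2(x/a_n)^{1/\gep}}$.

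Your bound $|\Delta_n(y)|\le Cn\int_{a_n}^{y\vee a_n}L(t)t^{-1}\,dt$ is simply wrong for $y<a_n$: it gives zero, whereas in fact $\Delta_n(y)=-n\,\bE[X_1\ind_{\{y<|X_1|\le a_n\}}]\sim (q-p)\,n\int_y^{a_n}L(t)t^{-1}\,dt$. When $q>p$ and $y\ll a_n$, this is positive and of order $n\ell(a_n)\sim|b_n|\gg a_n$, so $x'=x-\Delta_n(y)$ can be very negative for all $x$ in the range $a_n\le x\ll|b_n|$, and your Cram\'er bound on $\hat S_n-n\mu(y)$ becomes vacuous. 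Concretely, take $p=0$, $q=1$ and $y=1$: then $M_n\le y$ always, the true probability $\bP(S_n-b_n\ge x)$ is of order $e^{-c(x/a_n)^{1/\gep}}$ (it is a left deviation for the non-negative sum $-S_n$), but your argument gives nothing. Your proposed fix---lowering the truncation to some $y'<y$ and bounding $\bP(y'<M_n\le y)\le n\bP(X_1>y')$---cannot help either: with $y'=a_n(x/a_n)^{\gep'}$ that bound is of order $(x/a_n)^{-\gep'}$, polynomially and not stretched-exponentially small.

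The paper circumvents this by first splitting $X_i=X_i^+-X_i^-$ into non-negative parts, writing
\[
\bP(S_n-b_n\ge x,\,M_n\le y)\le \bP\big(S_n^+-b_n^+\ge(1-\gep)x,\,\max_i X_i^+\le y\big)+\bP\big(S_n^--b_n^-\le-\gep x\big),
\]
and then proving Proposition~\ref{thm:alpha1} for non-negative summands. For the positive part the shift is harmless (if $y\le a_n$ then $\mu^+(y)\le\mu^+(a_n)$; if $y>a_n$ then Claim~\ref{claim:mu} gives $n(\mu^+(y)-\mu^+(a_n))\le c\,a_n(y/a_n)^{1/2}\le\gep x$), yielding the first term of \eqref{eq:fuknag1}. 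The second term is exactly the left deviation $\bP(S_n^--b_n^-\le-\gep x)$ of a non-negative sum, which is bounded via an exponential Markov inequality with parameter $t=a_n^{-1}(x/a_n)^{(1-\gep)/\gep}$; this is where the stretched exponential genuinely comes from, and it is not affected by the constraint $M_n\le y$ at all.
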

The corresponding bounds hold in the case $\ga\in(0,1)\cup (1,2)$, see Theorem~\ref{thm:fuknag} and Proposition~\ref{thm:localfuknag}. 
This has a simple consequence, which is an extension of the local limit theorem \eqref{eq:LLT} to the case when $|x|/a_n \to+\infty$.
\begin{theorem}
\label{thm:localLD}
For any $\ga \in(0,2)$ (the case $\ga\in (0,1)\cup (1,2)$ is in \cite[Thm. 1.1.]{cf:CD}), there exists a constant $C_0>0$ such that for any $x\in \bbZ$
\begin{equation}
\label{eq:localLDgen}
\bP(S_n - \lfloor b_n \rfloor =x) \le \frac{C_0}{a_n} \,  n L(|x|) (1+|x|)^{-\ga} \, .
\end{equation}
If $p=0$ ($q=0$ being symmetric),  we get
$\bP(S_n - \lfloor b_n \rfloor =x)  = o\big( \frac{1}{a_n} n L(x) x^{-\ga} \big)$ as $\frac{x}{a_n} \to +\infty$.
\end{theorem}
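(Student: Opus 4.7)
Here is how I would proceed. By symmetry (applying the result to $-X_i$), it suffices to treat $x \ge 0$. I would split the proof into two regimes, separated by a threshold $K a_n$ where $K$ is a large constant chosen at the end:

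\textbf{Moderate deviations $x \le K a_n$.} Here the classical local limit theorem~\eqref{eq:LLT} gives $\bP(S_n = y) \le C/a_n$ uniformly in $y$, so it suffices to check that $n L(x) (1+x)^{-\ga} \geq c_K > 0$ throughout the range $[0, K a_n]$. This is a direct consequence of regular variation: the function $L(\cdot)(\cdot)^{-\ga}$ is regularly varying with negative index $-\ga$, hence up to Potter bounds it is (asymptotically) decreasing, so $nL(x)(1+x)^{-\ga} \geq c \cdot n L(Ka_n)(Ka_n)^{-\ga} \sim c K^{-\ga}$ for $x$ in the intermediate part of the range, and it is of order $n$ when $x$ is bounded. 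This handles the small-$x$ regime.

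\textbf{Large deviations $x > K a_n$.} Pick $\gamma \in (0, 1/2)$ (say $\gamma = 1/3$) and decompose according to the size of the maximum:
\[
\bP(S_n - \lfloor b_n\rfloor = x) \le \bP(S_n - \lfloor b_n\rfloor = x,\, M_n > \gamma x) + \bP(S_n - \lfloor b_n\rfloor = x,\, M_n \le \gamma x).
\]
For the first term, a union bound over the index of the large $X_i$, conditioning on its value, and using the uniform LLT bound $\bP(S_{n-1}=z)\le C/a_n$ on the remaining independent sum, yields
\[
\bP(S_n - \lfloor b_n\rfloor = x,\, M_n > \gamma x) \;\le\; n \cdot \frac{C}{a_n}\, \bP(X_1 > \gamma x) \;\le\; \frac{C'}{a_n}\, n L(x) x^{-\ga},
\]
the last step by \eqref{def:tail} and slow variation of $L$. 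For the second term, I would invoke the local Fuk--Nagaev inequality \eqref{eq:localLD} of Theorem~\ref{thm:FukNagalpha1} (respectively its analog Proposition~\ref{thm:localfuknag} when $\ga\in(0,1)\cup(1,2)$) with $y = \gamma x$. Its main contribution then takes the form
\[
\frac{C}{a_n}\, e^{1/\gamma}\, \bigl(1 + c_1 x/(n L(\gamma x))\bigr)^{-(1-\gep)/(2\gamma)} \;+\; \frac{C}{a_n}\, e^{-c_2(x/a_n)^{1/\gep}},
\]
and upon choosing first $\gep$ so small that $(1-\gep)/(2\gamma) > 1$, this polynomial term is bounded by a constant times $(n L(x) x^{-\ga})^{(1-\gep)/(2\gamma)}$, which in the regime $x \ge K a_n$ is at most $C'' (n L(x)x^{-\ga})/K^{\delta}$ for some $\delta > 0$ (using $n L(x) x^{-\ga} \le C/K^\ga$ from regular variation of $a_n$). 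The exponential remainder is negligible against $n L(x) x^{-\ga}$ since the latter decays only polynomially in $x/a_n$.

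\textbf{Main obstacle.} The crux is the calibration in the second term: the local Fuk--Nagaev inequality~\eqref{eq:localLD} only yields the exponent $(1-\gep)x/(2y)$, which is \emph{less} than the $x/y$ exponent appearing in the non-local version~\eqref{eq:fuknag1}, so merely taking $y=x/2$ (the natural analog of Caravenna--Doney's $\gamma = 1/2$) gives exponent $(1-\gep) < 1$ and would produce a bound \emph{weaker} than the desired $n L(x) x^{-\ga}$. It is precisely to counter this loss by a factor of $2$ that I take $\gamma < 1/2$, so that the resulting power $1/(2\gamma)$ is strictly bigger than $1$; smallness of $n L(x) x^{-\ga}$ for $x \ge K a_n$ then absorbs the remaining gap. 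Finally, the case $p = 0$ follows from the same scheme since in that case $\bP(X_1 > \gamma x) = o(L(x) x^{-\ga})$, which transfers to the first term above; combined with the Fuk--Nagaev bound (which does not depend on whether $p=0$) it yields the stated $o(\cdot)$ estimate.
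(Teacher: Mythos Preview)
Your proof is correct and follows the natural route that the paper leaves implicit. The paper does not spell out a separate proof of Theorem~\ref{thm:localLD}, presenting it instead as ``a simple consequence'' of the local Fuk--Nagaev inequality~\eqref{eq:localLD}; your two-regime decomposition (LLT for $x\le K a_n$, then split by $M_n \gtrless \gamma x$ for $x>Ka_n$) is exactly how one makes this explicit, and mirrors the structure of the paper's proofs of Theorems~\ref{thm:largedev} and~\ref{thm:localDoney} in Sections~\ref{sec:FukNagaev}--\ref{sec:locallarge}. Your identification of the ``halved exponent'' issue in~\eqref{eq:localLD} and its resolution via $\gamma<1/2$ is spot on; the paper's own calculations (e.g.\ taking $y=x/8$ in the proof of Theorem~\ref{thm:largedev}) use the same device.
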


Let us underline that this statement is somehow optimal under the mere assumption \eqref{def:tail}: for any sequence $(\gep_n)_{n\ge 0}$ with $\gep_n \to 0$, one may find distributions verifying \eqref{def:tail} and a sequence $x_n$ such that $x_n/a_n \to+\infty$ with
\begin{equation}
\limsup_{n\to + \infty}  \frac{a_n \bP(S_n=x_n)}{\gep_n n L(x_n) x_n^{-\ga} } =+\infty \, .
\end{equation}

\subsection{Improved local large deviation}
 We may improve Theorem \ref{thm:localLD} if we assume that the  (left or right)  tail of the distribution of $X_1$ verifies a more local condition than \eqref{def:tail}, as considered for example by Doney in \cite{cf:Don97} in the case $\ga\in(0,1)$.
A natural condition is that there exists a constant $C_1>0$ (resp.~$C_2$) such that 
 \begin{gather}
 \label{hyp:localtail1}
\bP(X_1= x)  \le C_1 L(x) (1+x)^{-(1+\ga)}  \qquad \text{for all }  x\in \bbN\, , \\
\bP(X_1= -x)  \le C_2 L(x) (1+x)^{-(1+\ga)}  \qquad \text{for all }  x\in \bbN\, .
\label{hyp:localtail2}
 \end{gather}
Another stronger assumption, analogous to Eq.~(1.3) in \cite{cf:Don97}, is that $\bP(X_1 =x)$ (resp.\ $\bP(X_1=-x)$) is regularly varying, that is that
\begin{gather}
\label{hyp:doney}
\bP(X_1 = x) \sim  p \ga L(x) x^{-(1+\ga)} \qquad \text{ as } x\to+\infty \, , \\
\bP(X_1 = - x) \sim  q \ga L(x) x^{-(1+\ga)} \qquad \text{ as } x\to+\infty \,.
\label{hyp:doney2}
\end{gather}
(If $p=0$ resp.\ $q=0$, we interpret this as $o(L(x) x^{-(1+\ga)})$.)

\begin{theorem}
\label{thm:localDoney}
Assume that \eqref{def:tail} holds with $\ga\in(0,2)$.
If in addition we have \eqref{hyp:localtail1}, then there is a constant $C>0$ such that for any $x\ge a_n$,
\begin{equation}
\label{eq:localLdoney}
\bP(S_n - \lfloor b_n \rfloor = x) \le C n L(x) x^{-(1+\ga)} \sim  \frac{C}{x} \frac{L(x)}{L(a_n)} \left( \frac{x}{a_n} \right)^{-\ga}\, .
\end{equation}
If we have \eqref{hyp:doney}, then as $n\to +\infty$, $x/a_n\to+\infty$
\begin{equation}
\label{eq:localLdoney2}
\bP(S_n - \lfloor b_n \rfloor = x)  \sim n  p \ga L(x) x^{-(1+\ga)} \, .
\end{equation}
The analogous conclusion to \eqref{eq:localLdoney} (resp.~\eqref{eq:localLdoney2}) holds for $\bP(S_n - \lfloor b_n \rfloor = - x) $ if we assume \eqref{hyp:localtail2} (resp.~\eqref{hyp:doney2}).
\end{theorem}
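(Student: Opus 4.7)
The plan is to use the one-big-jump heuristic: for $x \ge a_n$, the event $\{S_n - \lfloor b_n\rfloor = x\}$ is essentially realised by a single $X_i$ of order $x$ while the other $n-1$ summands behave typically. Fix $\gamma \in (0,1)$ (to be chosen small in terms of $\ga$) and decompose
\begin{equation*}
\bP(S_n - \lfloor b_n\rfloor = x) \;=\; \bP(S_n - \lfloor b_n\rfloor = x,\, M_n \le \gamma x) + \bP(S_n - \lfloor b_n\rfloor = x,\, M_n > \gamma x)\,.
\end{equation*}
The ``small-jump'' piece will be absorbed by a local bound with high power of $n L(x) x^{-\ga}$ (the Caravenna--Doney estimate cited before Theorem~\ref{thm:FukNagalpha1} for $\ga \ne 1$, and its analogue for $\ga=1$ extracted from~\eqref{eq:localLD}), while the ``big-jump'' piece carries the main contribution and is handled by a union bound together with the local tail hypothesis.

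For the upper bound~\eqref{eq:localLdoney}, the big-jump piece admits the very crude estimate
\begin{equation*}
\bP(S_n - \lfloor b_n\rfloor = x,\, M_n > \gamma x) \;\le\; n\!\!\sum_{y > \gamma x}\!\! \bP(X_1 = y)\, \bP(S_{n-1}' = x + \lfloor b_n\rfloor - y) \;\le\; n \sup_{y > \gamma x} \bP(X_1 = y)\,,
\end{equation*}
where $S_{n-1}' := X_2 + \cdots + X_n$, so that $\sum_z \bP(S_{n-1}' = z) = 1$. Hypothesis~\eqref{hyp:localtail1} combined with a Potter-type bound on $L$ then gives $\sup_{y > \gamma x} \bP(X_1 = y) \le C L(x) x^{-(1+\ga)}$, which is exactly what is claimed. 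The small-jump piece is dominated by $C a_n^{-1}(n L(x) x^{-\ga})^{r}$ with $r = \lceil 1/\gamma\rceil$; dividing by $n L(x) x^{-(1+\ga)}$ and using $n L(a_n) a_n^{-\ga} \asymp 1$ produces the ratio $(L(x)/L(a_n))^{r-1}(x/a_n)^{1-\ga(r-1)}$, which tends to $0$ as $x/a_n \to \infty$ provided $\gamma$ is chosen small enough that $r > 1 + 1/\ga$. The left-tail statement follows by running the same argument under~\eqref{hyp:localtail2}.

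For the sharp asymptotic~\eqref{eq:localLdoney2}, I would sandwich
\begin{equation*}
n\bP(X_n > \gamma x,\, M_{n-1} \le \gamma x,\, S_n - \lfloor b_n\rfloor = x) \;\le\; \bP(S_n - \lfloor b_n\rfloor = x,\, M_n > \gamma x) \;\le\; n\bP(X_n > \gamma x,\, S_n - \lfloor b_n\rfloor = x)\,,
\end{equation*}
the lower inequality following from the disjointness of the ``exactly-one-$X_i$-exceeding-$\gamma x$'' events together with exchangeability, and show that both extremes are equivalent to $np\ga L(x) x^{-(1+\ga)}$. After the change of variable $u = S_{n-1}' - \lfloor b_{n-1}\rfloor$, so that $y = x - u + (\lfloor b_n\rfloor - \lfloor b_{n-1}\rfloor)$, I split the sum according to whether $|u| \le K a_n$ (``typical'') or $|u| > K a_n$ (``atypical''). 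In the typical range, $y \sim x$, hence~\eqref{hyp:doney} and slow variation give $\bP(X_n = y) = (p+o(1))\ga L(x) x^{-(1+\ga)}$, and Gnedenko's local limit theorem~\eqref{eq:LLT} yields the contribution $np\ga L(x) x^{-(1+\ga)}\int_{-K}^K g(v)\dd v$, which converges to $np\ga L(x) x^{-(1+\ga)}$ as $K \to \infty$. The main obstacle is the atypical range in the upper bound: the uniform estimate $\bP(S_{n-1}' = \cdot) \le C/a_{n-1}$ is too weak for $\ga \le 1$, and one must instead feed back the bound~\eqref{eq:localLdoney} just established. An elementary estimation $\sum_{|u|>K a_n} L(u) u^{-(1+\ga)} \lesssim L(K a_n)(K a_n)^{-\ga}$ combined once more with $n L(a_n) a_n^{-\ga} \asymp 1$ then produces an extra factor $K^{-\ga}$, which vanishes as $K \to \infty$. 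The case $p=0$ and the left-tail statements under~\eqref{hyp:localtail2}--\eqref{hyp:doney2} follow by the same argument with obvious modifications.
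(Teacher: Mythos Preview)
Your proof is essentially correct and follows the same one-big-jump philosophy as the paper, but the two decompositions differ in structure. The paper splits into \emph{three} ranges of $M_n$: a main range $M_n\ge(1-\gep)x$ (handled exactly as your big-jump piece), an intermediate range $M_n\in(\gep x,(1-\gep)x)$ (bounded by $C_\gep nL(x)x^{-(1+\ga)}\bP(S_{n-1}-\lfloor b_n\rfloor\ge\gep x)=o(nL(x)x^{-(1+\ga)})$), and a small range $M_n\le\gep x$ which is further cut at $M_n\le c a_n$ (local Fuk--Nagaev) and treated dyadically for $M_n\in(c a_n,\gep x)$. Your two-way split, with the Caravenna--Doney/\eqref{eq:localLD} bound directly handling $M_n\le\gamma x$, is cleaner and avoids the dyadic argument entirely; the price is that your constant on the big-jump piece depends on $\gamma$, whereas the paper isolates the sharp constant on the top range and absorbs everything else into $o(\cdot)$ as $\gep\to0$.

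There is one small gap in your treatment of the atypical range for the sharp asymptotic. You propose to feed back \eqref{eq:localLdoney} to bound $\bP(S_{n-1}'-\lfloor b_{n-1}\rfloor=u)$ for $|u|>Ka_n$, but \eqref{eq:localLdoney} for negative $u$ requires \eqref{hyp:localtail2}, which is \emph{not} assumed in \eqref{eq:localLdoney2}. The fix is simpler than what you wrote and is exactly the paper's argument for its intermediate range: since $y(u)>\gamma x$ throughout, bound $\bP(X_n=y(u))\le C_\gamma L(x)x^{-(1+\ga)}$ uniformly (via \eqref{hyp:localtail1} and Potter), and then
\[
n\sum_{|u|>Ka_n}\bP(X_n=y(u))\bP(S_{n-1}'-\lfloor b_{n-1}\rfloor=u)\le C_\gamma\, nL(x)x^{-(1+\ga)}\,\bP\big(|S_{n-1}'-\lfloor b_{n-1}\rfloor|>Ka_n\big),
\]
which is $o(nL(x)x^{-(1+\ga)})$ as $K\to\infty$ by tightness of $(S_{n-1}'-b_{n-1})/a_n$. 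No feedback is needed; the ``uniform $C/a_{n-1}$'' bound is indeed too weak, but only because you then sum $\bP(X_n=y)$ over all $y>\gamma x$ rather than bounding it pointwise first. Finally, for $\ga=1$ your small-jump bound from \eqref{eq:localLD} carries the additional term $e^{-c_2(x/a_n)^{1/\gep}}$, which you should mention is also $o(nL(x)x^{-(1+\ga)})$.
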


\section{Applications: ladder epochs and renewal theorems}
\label{sec:appli}

In this section, we put more emphasis on the case $\ga=1$ (although we will also state results in the case $\ga\in(1,2)$).
Slowly varying functions will be interpreted  as  functions of the integers as well as differentiable functions of positive real numbers.
In the case $\ga=1$, we define
\begin{equation}
\label{def:ell}
\ell(n) := \int_1^n \frac{L(u)}{u} {\rm d} u   \ \ \text{if } |\mu|=+\infty \, ; \quad
\ell^\star(n) :=  \int_{n}^{+\infty} \frac{L(u)}{u} {\rm d} u \ \ \text{if } |\mu|<+\infty \, .
\end{equation}
We have that both $\ell(\cdot)$, $\ell^{\star}(\cdot)$ are slowly varying function (in fact, de Haan functions), with  $\ell(n)\to + \infty$, $\ell^\star (n) \to 0$ as $n\to+\infty$, and also $\ell(n)/L(n), \ell^\star(n)/L(n) \to +\infty$, see \cite[Prop.~1.5.9.a.]{cf:BGT}.

In the case $|\mu|=+\infty$, because of \eqref{def:tail} we have $\mu(t) \stackrel{t\to +\infty}{\sim} (p-q) \ell(t)$ and $b_n \simn (p-q) n  \ell(a_n)$. 
Similarly, in the case $|\mu|<+\infty$, we also get thanks to \eqref{def:tail} that  $\mu-\mu(t) \stackrel{t\to+\infty}{\sim} (p-q) \ell^\star(t)$: we end up with $b_n \sim \mu n$ if $\mu\neq 0$, and $b_n \sim (q-p) n \ell^{\star}(a_n)$ if $\mu=0$.
We therefore see that $|b_n|/a_n \to +\infty$ since $\ell(n)/L(n), \ell^\star(n)/L(n) \to +\infty$ (recall \eqref{def:an}), except possibly when $p=q$ and $\mu=0$ or $\pm\infty$.
In the case $p=q$ and $|\mu|=+\infty$ (resp.\ $\mu=0$), we get that $b_n = o(n\ell(a_n))$ (resp.\ $b_n = o( n \ell^\star(a_n))$,
and the general study gets much more subtle since we do not have a priori the asymptotic behavior of $b_n$. We will sometimes consider  the case $b_n/a_n \to b \in \bbR$.

\subsection{Ladder epochs}

Denote $T_-=\inf \{ n \, ;\, S_n <S_0 =0 \}$ and $T_+ = \inf \{ n \, ;\, S_n >S_0 =0 \}$ the first descending and ascending ladder epochs. A very natural question is first to know whether $T_-, T_+$ are defective (if so, then $S_n$ is said to \emph{drift} to $+\infty$, resp.\ $-\infty$), and to obtain the asymptotics of the tail probabilities $\bP(T_->n ), \bP(T_+ >n)$.

A crucial tool for this study is our Theorem \ref{thm:largedev}, which gives as an easy consequence the following precise asymptotics (see Lemmas \ref{lem:P<0} and \ref{lem:P<0bis} below):

(i) in the case $\ga=1$ with $|\mu|=+\infty$, we have $b_n/a_n \to +\infty$ if $p>q$ and $b_n/a_n \to -\infty$ if $p<q$, so that
\begin{equation}
\label{eq:forT>n}
\bP(S_n <0)   \sim \frac{q}{p-q} \frac{L( |b_n| )}{\ell( |b_n| )} \ \ \text{if } p>q \, ; \quad 
 \bP(S_n >0)  \sim \frac{p}{q-p} \frac{L(|b_n|)}{\ell(|b_n|)} \ \ \text{if } p<q \, ;
\end{equation}

(ii) in the case $\ga=1$ with $\mu=0$, we have $b_n/a_n \to -\infty$ if $p>q$ and $b_n/a_n \to +\infty$ if $p<q$, so that
 \begin{equation}
\label{eq:forT>n2}
\bP(S_n >0)   \sim \frac{p}{p-q} \frac{L( |b_n| )}{\ell^\star( |b_n| )} \ \ \text{if } p>q \, ; \quad 
 \bP(S_n <0)  \sim \frac{q}{q-p} \frac{L(|b_n|)}{\ell^\star(|b_n|)} \ \ \text{if } p<q \, .
\end{equation}
(One may naturally find asymptotics in the general case $\ga\in(0,2)$.)
From this, we may use Theorem 2 in \cite[XII.7]{cf:Feller} and Lemma \ref{lem:sumell} below, to get the following.
\begin{proposition}
\label{prop:drift}
Assume that \eqref{def:tail} holds with $\ga=1$, and assume that $|\mu|=+\infty$. If $q\neq 0$ then
\[\sum_{n=1}^{+\infty} \frac1n \bP(S_n<0)  = +\infty ,\]
and the random walk does not drift to $+\infty$, in the sense that $T_- <+\infty$ a.s.
Analogously, if $p\ne 0$ then the random walk does not drift  to $-\infty$, \textit{i.e.}\ $T_+ <+\infty$ a.s.
\end{proposition}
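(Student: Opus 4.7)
The plan is to combine the large deviation asymptotic of Theorem~\ref{thm:largedev} (summarised in~\eqref{eq:forT>n}) with the Spitzer--Rogozin dichotomy \cite[Thm.~2 in XII.7]{cf:Feller}: the random walk does not drift to $+\infty$ (equivalently, $T_- < +\infty$ almost surely) if and only if $\sum_{n\ge 1} n^{-1}\bP(S_n<0)=+\infty$. It therefore suffices to prove divergence of this series under the sole hypothesis $q\neq 0$; the analogous statement for $T_+$ then follows at once by applying the same reasoning to $(-X_i)_{i\ge 1}$, which swaps the roles of $p$ and $q$.

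I would split the argument according to the sign of $p-q$. When $p<q$ (so in particular $q\neq 0$), the discussion following~\eqref{def:ell} yields $b_n/a_n\to -\infty$, and since $(S_n-b_n)/a_n$ converges weakly to a non-degenerate $1$-stable law, the identity $\bP(S_n<0)=\bP\bigl((S_n-b_n)/a_n < -b_n/a_n\bigr)$ forces $\bP(S_n<0)\to 1$, so the series diverges trivially. When $p>q$, the estimate $\bP(S_n<0)\sim \frac{q}{p-q}\,L(b_n)/\ell(b_n)$ of~\eqref{eq:forT>n} is genuinely nonzero (precisely because $q\neq 0$), and the task reduces to $\sum_{n\ge 1} n^{-1}L(b_n)/\ell(b_n)=+\infty$, which is the content of Lemma~\ref{lem:sumell}. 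The heart of that lemma is the elementary identity $(\log\ell)'(u)=\ell'(u)/\ell(u)=L(u)/(u\ell(u))$, so $\int^{t} L(u)/(u\ell(u))\,\dd u = \log\ell(t)+O(1) \to +\infty$ because $\ell(t)\to+\infty$; the passage from this integral to the series over $b_n$ is a routine integral-comparison argument using Potter's bounds and the slow variation of $L$ and $\ell$, exploiting that $b_n\sim (p-q)n\ell(a_n)$ is regularly varying of index $1$ in $n$.

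The main obstacle I anticipate is the borderline case $p=q\neq 0$, in which $b_n=o(n\ell(a_n))$ and~\eqref{eq:forT>n} no longer furnishes an asymptotic for $\bP(S_n<0)$; indeed $b_n/a_n$ may fluctuate with no definite limit. To handle it I would symmetrise: let $S_n'$ be an independent copy of $S_n$ and $\tilde S_n=S_n-S_n'$. Then $\tilde S_n$ is symmetric, so $\bP(\tilde S_n<0)\ge \tfrac12\bigl(1-\bP(\tilde S_n=0)\bigr)$, and Gnedenko's local limit theorem~\eqref{eq:LLT} applied to $(\tilde S_n)$ (itself in the Cauchy domain of attraction with scaling $2a_n$) gives $\bP(\tilde S_n=0)=O(1/a_n)=o(1)$, whence $\bP(\tilde S_n<0)\ge 1/4$ for all large $n$. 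Combining this with a L\'evy-type concentration inequality to compare $\bP(\tilde S_n<0)$ with $\bP(S_n<m_n)$ for a median $m_n$ of $S_n$, and using the weak convergence of $(S_n-b_n)/a_n$ to the \emph{symmetric} limit (since $p=q$) to show $m_n=b_n+o(a_n)$, reduces the problem to controlling the sign and magnitude of $b_n$; a final case analysis, invoking Theorem~\ref{thm:largedev} when $|b_n|/a_n\to+\infty$ and the weak limit otherwise, yields $\sum_{n\ge 1} n^{-1}\bP(S_n<0)=+\infty$ in this case as well.
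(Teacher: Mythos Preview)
Your treatment of the cases $p<q$ and $p>q$ is correct and coincides with the paper's argument (Lemma~\ref{lem:P<0} combined with Lemma~\ref{lem:sumell} and \cite[XII.7, Thm.~2]{cf:Feller}).

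For $p=q$, however, the symmetrisation detour does not accomplish what you claim. The bound $\bP(\tilde S_n<0)\ge 1/4$ is correct but does not help: there is no L\'evy-type symmetrisation inequality that converts this into a \emph{lower} bound on $\bP(S_n<0)$ --- the classical inequalities run the other way, bounding $\bP(|S_n-m_n|>t)$ \emph{above} by $2\bP(|\tilde S_n|>t)$. And comparing with $\bP(S_n<m_n)$ is vacuous, since that probability is $\ge 1/2$ by definition of a median regardless of any hypothesis. So the symmetrisation and the ``L\'evy concentration'' step contribute nothing; your ``final case analysis'' is in fact the entire argument.

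The paper handles $p=q$ more directly, and this is essentially your case analysis made uniform in $k$ rather than along subsequences. For each $k$: if $|b_k|/a_k$ stays bounded, weak convergence of $(S_k-b_k)/a_k$ to the \emph{symmetric} Cauchy limit keeps $\bP(S_k<0)$ bounded away from~$0$; if $b_k/a_k$ is large positive, Theorem~\ref{thm:largedev} gives $\bP(S_k<0)\sim \tfrac12 k L(b_k)/b_k \ge c'\,L(k\ell(a_k))/\ell(a_k)$ since $b_k=o(k\ell(a_k))$; if $b_k/a_k$ is large negative, $\bP(S_k<0)\to 1$. In every case $k^{-1}\bP(S_k<0)\ge c\,L(k\ell(a_k))/(k\ell(a_k))$ for some uniform $c>0$, and the divergence of $\sum_k L(k\ell(a_k))/(k\ell(a_k))$ follows from the same change-of-variables computation underlying Lemma~\ref{lem:sumell}. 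You should drop the symmetrisation and state this directly; note also that your phrasing ``when $|b_n|/a_n\to+\infty$'' is imprecise here, since $b_n/a_n$ need not have any limit --- the argument must be made for each fixed $k$.
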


Note that in the case of finite non-zero mean $\mu\neq 0$, the strong law of large numbers gives that $(S_n)_{n\ge 1}$ is transient, and drifts to $+\infty$ (resp.\ to $-\infty$)  if $\mu>0$ (resp.\ $\mu<0$). In the case $\mu=0$, it is known that $(S_n)_{n\ge 1}$ is recurrent, so $T_-,T_+ <+\infty$ a.s.

Proposition~\ref{prop:drift} tells that if $\ga=1$ with $|\mu| = +\infty$ and $q\ne 0$, then even if $b_n/a_n \to +\infty$ so that $S_n$ goes to $+\infty$ in probability (more precisely $S_n/b_n$ converges in probability to~$1$), the random walk does not drift to $+\infty$ (and $\liminf S_n = -\infty$ a.s.).
This is due to the fact that even if the random walk is in probability ``close'' to $b_n \to+\infty$, once in a while a large jump to the left occurs (of length of order $b_n$), making $S_n<0$ (and in fact, of order $-b_n$). 
We will see below in Section \ref{sec:transient} that the random walk may still be transient (in the sense that $0$ may be visited only a finite number of times), but that determining  transience/recurrence is a more complicated matter.

\smallskip

As far as the asymptotics of $\bP(T_->n)$ are concerned (the estimates for  $T_+$ are symmetric by considering $(-S_n)_{n\geq 0}$), 
we refer to the seminal papers of Rogozin \cite{cf:Rog} and of Doney \cite{cf:Don82,cf:Don89}, and to \cite{cf:BorII} or \cite{cf:VW} for more recent results. 
However, the case $\ga=1$ (to our knowledge) does not appear to have been treated in the literature, apart from the recent work of Budd, Curien and Marzouk \cite[Prop.~1]{cf:BCM} where a rough estimate on $\bP(T_->n)$ is given in the case where $L(n)$ is constant. We shall give a sharp asymptotic, in the case of a general slowly varying function.

We give two levels of sharpness, depending on whether we assume that $L(\cdot)$ in \eqref{def:tail} is slowly varying in the Flajolet-Odlyzko sense (see conditions V1-V2 in \cite{cf:FO}), that is verifies:
\begin{itemize}
\item[V1.] there exists some $x_0>0$ and some $\phi \in (\pi/2,\pi)$ such that $L(z)$ is analytic in the region $ \{ z\, ;\,  \arg (z -x_0) \in [-\phi,\phi]  \}$
\item[V2.] we have, for any $\theta \in [-\phi,\phi]$ and $x\ge x_0$
\[ \Big| \frac{L(x e^{i\theta})}{L(x)} -1\Big| \le \gep(x) \, , \qquad  \Big| \frac{L(x \log x )}{L(x)} -1\Big| \le \gep(x) \, , \qquad \text{for some } \gep(x) \stackrel{x\to +\infty}{\to} 0\, .\]
\end{itemize}
This is satisfied for example if $L(x)$ is equal to $(\log x)^a$ or $(\log \log x )^a$for some $a\in \bbR$, but we stress that V2 fails for instance if $L(x) = \exp( (\log x)^b  )$ for some $b\in (0,1)$.

\subsubsection{Case $\ga=1$, $|\mu|=+\infty$}

\begin{theorem}
\label{thm:ladder}
Assume that \eqref{def:tail} holds with $\ga=1$ and $|\mu|=+\infty$.
Recall the definitions of $b_n= n \mu(a_n)$ in \eqref{def:bn} and of $\ell(\cdot)$ in \eqref{def:ell}.

{\rm (i)} If $p=q$ and  $b= \lim_{n\to +\infty} b_n /a_n$ exists, then there is a slowly varying $\gp(\cdot)$ such that 
\[\bP(T_->n)  = \varphi(n)\, n^{-\rho} \qquad \text{with } \rho = \frac12 + \frac1\pi \arctan\Big(\frac{2b}{\pi} \Big) \, .\]

{\rm (ii)} If $p < q$, then $b_n \sim -(q-p) n \ell(a_n) \to -\infty$ as $n\to +\infty$, and
\[\bP(T_- >n)  =
  \frac{L(|b_n|)}{n} \ell(|b_n|)^{\frac{p}{q-p} -1 +o(1)}  \, .
\]

{\rm (iii)} If $p>q$, then $b_n \sim (p-q) n \ell(a_n) \to +\infty$ as $n\to+\infty$, and
\[  \bP(T_->n) = \ell(b_n)^{ - \frac{q}{p-q} +o(1)} \, . \]
If additionally V1-V2 above holds, then we can make the $o(1)$ in {\rm (ii)} and {\rm (iii)} more precise: in both cases there is a slowly varying $\tilde L(\cdot)$ such that $\ell(|b_n|)^{o(1)}$ can be replaced by~$\tilde L\big( \ell(|b_n|)\big)$.
\end{theorem}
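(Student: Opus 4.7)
The proof is based on the Sparre Andersen identity
\[
F(z) := \sum_{n\ge 0} \bP(T_->n)\, z^n \;=\; \exp\!\Big(\sum_{k\ge 1} \frac{z^k}{k}\,\bP(S_k\ge 0)\Big),\qquad z\in[0,1),
\]
combined with the sharp asymptotics for $\bP(S_k\ge 0)$ (and $\bP(S_k<0)$) supplied by \eqref{eq:forT>n}--\eqref{eq:forT>n2}, themselves consequences of Theorem~\ref{thm:largedev}. The common strategy is to estimate $\log F(z)$ as $z\uparrow 1$ and then transfer back to the tail of $T_-$ via Karamata's Tauberian theorem followed by a monotone density argument.

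For case~(i), the weak convergence $(S_n-b_n)/a_n\Rightarrow Z$ (symmetric Cauchy since $p=q$) together with $b_n/a_n\to b$ gives $\bP(S_n\ge 0)\to\bP(Z+b\ge 0)$, a quantity whose value comes out to the~$\rho$ in the statement after an explicit computation with the Cauchy density. Spitzer's condition therefore holds, and Rogozin's theorem~\cite{cf:Rog} applies directly to produce $\bP(T_->n)=\varphi(n)\,n^{-\rho}$ for some slowly varying $\varphi$.

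For cases (ii) and (iii), one first uses $|b_k|\asymp k\,\ell(a_k)$ and slow variation to replace $L(|b_k|)/\ell(|b_k|)$ by $L(k)/\ell(k)$; then \eqref{eq:forT>n} reads
\[
\bP(S_k\ge 0)\sim \tfrac{p}{q-p}\,\tfrac{L(k)}{\ell(k)}\ \ (\text{case~(ii)}),\qquad \bP(S_k<0)\sim \tfrac{q}{p-q}\,\tfrac{L(k)}{\ell(k)}\ \ (\text{case~(iii)}).
\]
Since $\ell'(t)=L(t)/t$, the ratio $L(k)/(k\ell(k))$ is a discrete logarithmic derivative of $\ell$, and a straightforward Riemann-sum comparison yields
\[
\sum_{k=1}^n \tfrac{1}{k}\bP(S_k\ge 0)\sim \tfrac{p}{q-p}\log\ell(n)\ \text{in (ii)},\qquad \sum_{k=1}^n \tfrac{1}{k}\bP(S_k<0)\sim \tfrac{q}{p-q}\log\ell(n)\ \text{in (iii)}.
\]
Exponentiating, $F(z)\sim \ell(1/(1-z))^{p/(q-p)}$ in~(ii), while $F(z)\sim (1-z)^{-1}\,\ell(1/(1-z))^{-q/(p-q)}$ in~(iii). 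Karamata's Tauberian theorem then delivers the asymptotics of $\sum_{k\le n}\bP(T_->k)$. Since $n\mapsto\bP(T_->n)$ is non-increasing and the power $\ell^{p/(q-p)}$ lies in the de~Haan class~$\Pi$ with auxiliary function $\tfrac{p}{q-p}\ell^{p/(q-p)-1}L$, the monotone density theorem for $\Pi$ upgrades this to the announced tail asymptotics, the slowly varying freedom coming from Karamata being absorbed into the $\ell(|b_n|)^{o(1)}$ factor (the substitutions $L(|b_n|)\sim L(n)$, $\ell(|b_n|)\sim\ell(n)$ are harmless).

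To sharpen the $\ell^{o(1)}$ correction into the explicit $\tilde L(\ell(|b_n|))$ under~V1--V2, the Karamata step is replaced by Flajolet--Odlyzko singularity analysis~\cite{cf:FO}: conditions V1--V2 are precisely what is needed to extend $\ell$ (and hence $F$) analytically to a camembert domain around $z=1$, after which the singular expansion of $F$ transfers coefficient-wise, and the slowly varying $\tilde L$ emerges from the error control of the transfer theorem. \textbf{The main obstacle} will be the Tauberian step in case~(ii): since $F$ is only \emph{slowly varying} at $z=1$, Karamata directly controls merely $\sum_{k\le n}\bP(T_->k)$, and extracting $\bP(T_->n)$ itself requires carefully combining monotonicity with the de~Haan class structure of $\ell^{p/(q-p)}\in\Pi$.
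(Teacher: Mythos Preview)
Your setup via the Sparre Andersen identity and case~(i) are essentially the paper's argument, and the estimate $\sum_{k\le n}k^{-1}\bP(S_k\ge 0)\sim\frac{p}{q-p}\log\ell(|b_n|)$ in case~(ii) (and its analogue in~(iii)) is correct. The gap is the step ``Exponentiating, $F(z)\sim\ell(1/(1-z))^{p/(q-p)}$.'' From $f(s)\sim c\log\ell$ you only obtain $F(z)=e^{f(z)}=\ell(1/(1-z))^{\,c+o(1)}$, which is \emph{not} an asymptotic equivalence: the $o(1)$ in the exponent produces a multiplicative error $\ell^{o(1)}$ that need not tend to~$1$. Karamata's Tauberian theorem requires a genuine $\sim$ as input, so neither it nor the subsequent monotone density / $\Pi$-class argument can be invoked on what you actually have. (A secondary imprecision: you replace $L(|b_k|)$ by $L(k)$, but $|b_k|/k\to\infty$ and $L$ is merely slowly varying, so these need not be asymptotic.)

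The paper avoids this obstruction in case~(ii) by differentiating: it controls $p'(s)=f'(s)e^{f(s)}$, where $f'(s)$ has a clean asymptotic $\sim\frac{p}{q-p}\,\frac{1}{1-s}\,\frac{L(|b_{1/(1-s)}|)}{\ell(|b_{1/(1-s)}|)}$, and then bypasses Tauberian theorems entirely. It plugs $s=1-1/n$ and uses the monotonicity of $p_k=\bP(T_->k)$ to get the upper bound $p_n\le\frac{L(|b_n|)}{n}\ell(|b_n|)^{\frac{p}{q-p}-1+o(1)}$ directly from $\sum k p_k(1-1/n)^{k-1}\ge c n^2 p_n$. For the lower bound it takes $s=1-1/(n t_n)$ with $t_n=\ell(|b_n|)^{\varepsilon}$, splits the sum at $k=n$, controls the tail by $p_n(nt_n)^2$, and lets $\varepsilon\downarrow 0$. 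Case~(iii) is handled by the same direct argument applied to $p(s)=(1-s)^{-1}e^{-h(s)}$. For the V1--V2 refinement, Flajolet--Odlyzko is not applied to $F$ itself (which is not known to be analytic in a camembert); rather, it is used to expand the auxiliary function $g(s)=\log\ell(\tilde b_{1/(1-s)})$ in a power series with coefficients $\sim L(\tilde b_n)/(n\ell(\tilde b_n))$, after which the residual $f-\frac{p}{q-p}g$ is shown by hand to exponentiate to a function of the form $\tilde L(\ell(\tilde b_{1/(1-s)}))$ with $\tilde L$ slowly varying.
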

We are also able to deal with the case $p=q=1/2$ when $b_n/a_n \to+\infty$ but we did not state it here for conciseness, since it requires further notations: we refer to Section~\ref{sec:p=q} for details, see in particular \eqref{eq:T-T+}.
After this article was complete, Kortchemski and Richier~\cite{cf:KR} proved a similar statement by replacing the $\ell(|b_n|)^{o(1)}$ by some slowly varying function $\Lambda(n)$, without assuming V1-V2, see \cite[Prop.~12]{cf:KR}. However, 
our statement under assumption V1-V2 is somehow stronger: it tells that $\Lambda(n)$ is of the form $\tilde L ( \ell(|b_n|))$ with $\tilde L(\cdot)$ slowly varying.

As an application of Theorem \ref{thm:ladder}, we improve Proposition~1 of \cite{cf:BCM} in the case $p\ne q$: if $L(n)$ is constant equal to~$c$ (and obviously verifies V1-V2), then $\ell(n) \sim c \log n$, $a_n \sim c n$, and $b_n \sim (p-q) c n\log n$. Hence, we obtain that there exist some slowly varying functions $\bar L(\cdot),\tilde L(\cdot)$ such that 
\begin{equation}
\label{eq:T>n}
\bP(T_->n) =
\begin{cases}
(\log n)^{ - \frac{q}{p-q}} \, \bar L(\log n)& \text{if } p>q \, ,\\
n^{-1} (\log n)^{ \frac{p}{q-p} -1 }\,  \tilde L(\log n) & \text{if } p<q \, .
\end{cases}
\end{equation}

\subsubsection{Case $|\mu|<+\infty$}
In the finite mean case, estimates for $\bP(T_- >n)$ can be found in \cite{cf:Don89} and in \cite{cf:BorI,cf:BorII}, but again, the case $\ga=1$ appears to have been left aside. 
We stress that the proofs in \cite{cf:BorI, cf:BorII} rely on an asymptotic estimate of $\bP(S_n<0)$ (or $\bP(S_n>0)$) as $n\to+\infty$, that are given by Theorem~\ref{thm:largedev}.
We state the results for the sake of completeness, leaving aside the case $\ga=1$, $\mu=0$ for the moment.

\begin{theorem}[cf.\ Theorems 0-I of \cite{cf:Don89} and Theorem 1 of \cite{cf:BorI}]
\label{thm:ladder2}
Assume that \eqref{def:tail} holds with $\ga\in[1,2)$ and $|\mu|<+\infty$.

{\rm (i)} If $\mu > 0$, then
$\bP(T_-  = +\infty) =e^{-D_-}<1$ with $D_- := \sum_{k=1}^{+\infty} k^{-1}\bP(S_k <  0) <+\infty$. Moreover, if $q\neq 0$ we have
\[
\bP(T_- = n ) \simn  \frac{q\, e^{-D_-}}{\mu^{\ga}} L(n) n^{-\ga} \, .
\]
 
{\rm (ii)} If $\mu < 0$ and $p\neq 0$, then we have $D_+:= \sum_{k=1}^{+\infty} \frac{1}{k} \bP(S_k \ge 0) <+\infty$ and
 \[\bP(T_- > n) \simn   \frac{ p\, e^{D_+}}{ |\mu|^{\ga}}\, L(n) n^{-\ga}  \, .\]
 
{\rm (iii)} If $\mu =0$ and $\ga\in(1,2)$, then there exists a slowly varying function $\gp(\cdot)$ such that
 \[\bP(T_- >n) = \gp(n) \, n^{-\rho} \, , \]
 with $\rho := \frac12 +\frac{1}{\pi \ga} \arctan\big( (p-q) \tan(\pi\ga/2) \big)$.
\end{theorem}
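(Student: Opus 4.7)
The plan is to apply the Sparre--Andersen/Wiener--Hopf factorization
\begin{gather*}
1-\bE\bigl[s^{T_-}\ind_{\{T_-<\infty\}}\bigr]=\exp(-\phi_-(s)),\qquad \phi_-(s):=\sum_{n\geq 1}\tfrac{s^n}{n}\bP(S_n<0),\\
\sum_{n\geq 0}\bP(T_->n)\,s^n=\exp(\phi_+(s)),\qquad \phi_+(s):=\sum_{n\geq 1}\tfrac{s^n}{n}\bP(S_n\geq 0),
\end{gather*}
valid on $s\in[0,1)$, to feed in the sharp large-deviation estimate supplied by Theorem~\ref{thm:largedev}, and to extract the tail of $T_-$ via coefficient analysis. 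For $\alpha\in(1,2)$ this is exactly the Doney~\cite{cf:Don89} / Borovkov~\cite{cf:BorI} scheme; my contribution is to check that it carries through to the borderline case $\alpha=1$ with $|\mu|<\infty$.

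For (i), the first step is to prove the asymptotic $\bP(S_n<0)\sim q\mu^{-\alpha}L(n)\,n^{1-\alpha}$. When $\alpha\in(1,2)$, this is Theorem~\ref{thm:largedev} applied with $x=n\mu$, using $b_n=n\mu$ and $L(n\mu)\sim L(n)$. When $\alpha=1$, $b_n=n\mu(a_n)$ with $\mu-\mu(a_n)=(p-q)\ell^{\star}(a_n)(1+o(1))=o(1)$, so $b_n\sim n\mu$ and the same application gives $\bP(S_n<0)\sim q\mu^{-1}L(n)$. Hence $D_-<\infty$: automatically for $\alpha>1$, and equivalent to $\sum_k L(k)/k<\infty$ (itself equivalent to $|\mu|<\infty$) for $\alpha=1$. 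Letting $s\uparrow 1$ in the first factorization then gives $\bP(T_-=\infty)=e^{-D_-}\in(0,1)$ (the inequality $D_->0$ using $q>0$).

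Differentiating the first factorization yields
\[
n\bP(T_-=n)=\sum_{k=1}^{n}c_{n-k}\,\bP(S_k<0),\qquad c_j:=[s^j]e^{-\phi_-(s)}\geq 0,\ \ \sum_j c_j=e^{-D_-}.
\]
Since $\bigl(\bP(S_k<0)\bigr)_k$ is regularly varying of index $1-\alpha\leq 0$ and $(c_j)$ is summable with known total mass, a standard convolution lemma for regularly varying summands (cf.\ \cite[App.~A]{cf:Don97}, \cite[Thm.~4.12]{cf:BorII}) produces $n\bP(T_-=n)\sim e^{-D_-}\bP(S_n<0)$, which is (i). Case (ii) is analogous: from $\sum_n\bP(T_->n)s^n=e^{\phi_+(s)}$ with $\phi_+(1)=D_+<\infty$, one expands around $s=1$ and applies the same convolution estimate to obtain $\bP(T_->n)\sim e^{D_+}\bP(S_n\geq 0)/n\sim pe^{D_+}|\mu|^{-\alpha}L(n)n^{-\alpha}$. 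For (iii), with $\mu=0$ and $\alpha\in(1,2)$, $S_n/a_n$ converges in law to a strictly $\alpha$-stable $Y_\alpha$ whose positivity parameter is $\rho=\tfrac12+\tfrac{1}{\pi\alpha}\arctan\bigl((p-q)\tan(\pi\alpha/2)\bigr)$; Rogozin's theorem~\cite{cf:Rog}---equivalently, Spitzer's identity together with the Ces\`aro convergence $n^{-1}\sum_{k\leq n}\bP(S_k>0)\to\rho$ coming from the stable CLT---then yields the regularly varying tail $\bP(T_->n)=\varphi(n)\,n^{-\rho}$.

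The main obstacle is the $\alpha=1$ endpoint of (i)--(ii): $\bP(S_k<0)$ is only slowly varying there, so the convolution lemma must be applied just past the strictly regularly varying setting Doney and Borovkov worked in. I would handle this by splitting $\sum_{j+k=n}c_j\bP(S_k<0)$ at $k=n/2$: the range $k\geq n/2$ contributes $\sim e^{-D_-}\bP(S_n<0)$ by the slow variation of $L$ and summability of $(c_j)$, while the range $k<n/2$ is shown to be of smaller order using decay estimates on $c_j$ that follow from the regularity of $\phi_-(s)$ near $s=1$.
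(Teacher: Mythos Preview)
Your overall strategy---Theorem~\ref{thm:largedev} fed into the Wiener--Hopf factorization, then coefficient extraction via a subexponential/CNW-type lemma, with (iii) handled through Spitzer's condition and Rogozin---is exactly the paper's. But there is a concrete error in the (i) argument that makes your splitting circular. The coefficients $c_j:=[s^j]e^{-\phi_-(s)}$ are \emph{not} nonnegative: since $e^{-\phi_-(s)}=1-\bE\bigl[s^{T_-}\ind_{\{T_-<\infty\}}\bigr]$, one has $c_0=1$ and $c_j=-\bP(T_-=j)\le 0$ for $j\ge 1$. The convolution identity $n\bP(T_-=n)=\sum_{k=1}^{n}c_{n-k}\bP(S_k<0)$ survives, and the range $k\ge n/2$ still produces $\sim e^{-D_-}\bP(S_n<0)$ using only $\sum_j|c_j|<\infty$. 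The range $k<n/2$, however, is bounded by $\sum_{j>n/2}|c_j|\cdot\sup_k\bP(S_k<0)=\bP(n/2<T_-<\infty)\cdot O(1)$, and showing this is $o\bigl(\bP(S_n<0)\bigr)$ is precisely the tail estimate on $T_-$ you are proving. Your ``decay estimates on $c_j$ from the regularity of $\phi_-$ near $s=1$'' is where all the work hides, and it is not lighter than the theorem itself.

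The paper sidesteps this loop: rather than differentiating and convolving, it applies \cite[Thm.~1]{cf:CNW} directly to the composition $e^{-D_-\,G(s)}$ with $G(s)=\phi_-(s)/D_-$ a subexponential generating function, yielding $[s^n]e^{-\phi_-(s)}\sim -e^{-D_-}\cdot n^{-1}\bP(S_n<0)$ (hence $\bP(T_-=n)\sim e^{-D_-}n^{-1}\bP(S_n<0)$) in one stroke, and similarly $e^{D_+ G(s)}$ for~(ii). The only thing to verify at $\alpha=1$ is that $n^{-1}\bP(S_n<0)\sim q\mu^{-1}L(n)n^{-1}$ defines (after normalization) a subexponential distribution on $\bbN$, which holds because its tail $\sim q\mu^{-1}\ell^\star(n)$ is slowly varying.
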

The proof of items (i)-(ii) easily translates from \cite[Thm.~1]{cf:BorI} to the case $\ga=1$ with a finite non-zero mean.
For item (i), we simply use that in the case $\mu>0$, we have
$n^{-1}\bP(S_n<0)\sim q L(n) (\mu n)^{-\ga}$ (from Theorem~\ref{thm:largedev}), which is regularly varying:  the result then follows from Eq.~(23) in \cite{cf:BorI} together with an application of \cite[Thm.~1]{cf:CNW}, using that $D_-:=\sum_{k} k^{-1} \bP(S_k>0) <+\infty$.
Item (ii) is similar. For (iii), we have that $b_n\equiv 0$, and we can use that $\bP(S_k >0) = \bP(S_k/a_k>0)$ converges to $\bP(Y>0)$ where $Y$ has the limiting $\ga$-stable law, with skewness parameter $\beta =p-q$ (hence the formula for the positivity parameter $\rho:=\bP(Y>0)$, see \cite[Sec.~2.6]{cf:Zolot}). This implies (see \cite{cf:Rog,cf:Don89}) that $T_-$ is in the domain of attraction of a positive stable random variable with index $\rho$, and item (iii) follows.

\subsubsection{Case $\ga=1$, $\mu=0$}
We left that case aside since it is not a straightforward adaptation of \cite[Thm.~1]{cf:BorI}, in particular because $\sum_{k} k^{-1} \bP(S_k>0) =+\infty$ (see Proposition~\ref{prop:drift}).
\begin{theorem}
\label{thm:laddermu0}
Assume that \eqref{def:tail} holds with $\ga=1$ and assume that $\mu=0$. Then  (recall the definition \eqref{def:ell} of $\ell^\star(\cdot)$),
 \begin{enumerate}
 \item[(i)] If $p>q$ we have $b_n\sim -(p-q) n\ell^{\star} (a_n) \to -\infty$ as $n\to+\infty$, and
 \[\bP(T_- >n) = \frac{L(|b_n|)}{n} \ell^{\star}(|b_n|)^{ - \frac{p}{p-q} -1 +o(1)} \, . \]
 \item[(ii)] If $p<q$ we have $b_n\sim (q-p) n\ell^{\star} (a_n) \to +\infty$ as $n\to+\infty$, and
  \[\bP(T_- >n) = \ell^{\star}(b_n)^{ \frac{q}{q-p}+o(1)} \, . \]
 \end{enumerate}
If additionally $L(\cdot)$ verifies  V1-V2 above, then in each case there is a slowly varying  $\tilde L(\cdot)$ such that such that $\ell^\star(|b_n|)^{o(1)}$ can be replaced by $\tilde L\big( \ell^\star(|b_n|)\big)$.
\end{theorem}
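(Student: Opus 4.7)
My plan is to follow the proof of Theorem~\ref{thm:ladder}, replacing the de Haan function $\ell$ by its finite-mean counterpart $\ell^\star$. The starting point will be the Sparre--Andersen identity
\[\sum_{n\ge 0} z^n\, \bP(T_->n) \;=\; \frac{1}{1-z}\, \exp\Big( -\sum_{k\ge 1} \frac{z^k}{k}\, \bP(S_k <0) \Big),\]
which reduces the problem to the behavior of the partial sums $\sigma_n := \sum_{k=1}^n \frac{1}{k}\bP(S_k<0)$. Before applying it, I would check that $\mu=0$ combined with $\mu-\mu(t) \sim (p-q)\ell^\star(t)$ yields $b_n \sim -(p-q)n\,\ell^\star(a_n)$, so that $|b_n|/a_n\to +\infty$; hence \eqref{eq:forT>n2} (a consequence of Theorem~\ref{thm:largedev}) applies and delivers the sharp estimates $\bP(S_k>0) \sim \frac{p}{p-q}\, L(|b_k|)/\ell^\star(|b_k|)$ in case (i), and $\bP(S_k<0) \sim \frac{q}{q-p}\, L(|b_k|)/\ell^\star(|b_k|)$ in case (ii).

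\textbf{The core computation.} The decisive step is the asymptotic evaluation of $\sigma_n$. Using that $|b_k|$ is regularly varying of index $1$ in $k$, the change of variable $u=|b_k|$ satisfies $\mathrm{d}u/u \sim \mathrm{d}k/k$, and combined with the identity $\frac{\mathrm{d}}{\mathrm{d}u}\log \ell^\star(u) = -L(u)/\bigl(u\,\ell^\star(u)\bigr)$ this yields
\[\sum_{k=1}^n \frac{1}{k}\frac{L(|b_k|)}{\ell^\star(|b_k|)} \;\sim\; -\log \ell^\star(|b_n|).\]
In case (i) this gives $\sigma_n \sim \log n + \frac{p}{p-q}\log \ell^\star(|b_n|)$, while in case (ii) $\sigma_n \sim -\frac{q}{q-p}\log \ell^\star(|b_n|)$. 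Plugging this into the Sparre--Andersen identity at $z=1-1/n$ (where $1/(1-z)=n$) gives, up to a $\ell^\star(|b_n|)^{o(1)}$ factor,
\[\frac{1}{1-z}\,e^{-\sigma(z)} \;\asymp\; \ell^\star(|b_n|)^{-p/(p-q)}\ \text{(case (i))},\qquad \asymp\ n\,\ell^\star(|b_n|)^{q/(q-p)}\ \text{(case (ii))}.\]

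\textbf{Coefficient extraction and the V1--V2 refinement.} Next I would extract the $n$-th coefficient of this generating function using Karamata/de Haan Tauberian theorems, together with the monotone-density principle (legitimate because $\bP(T_->n)$ is nonincreasing). Differentiating $\log \ell^\star(|b_n|)$ with respect to $\log n$ produces an extra factor $L(|b_n|)/\ell^\star(|b_n|)$ (using again $\mathrm{d}\ell^\star(u)/\mathrm{d}u=-L(u)/u$ and $\mathrm{d}|b_n|/\mathrm{d}\log n \sim |b_n|$), and shifts the exponent of $\ell^\star(|b_n|)$ by $-1$. The outcome is $\bP(T_->n) \asymp \frac{L(|b_n|)}{n}\,\ell^\star(|b_n|)^{-p/(p-q)-1}$ in case (i) and $\bP(T_->n) \asymp \ell^\star(|b_n|)^{q/(q-p)}$ in case (ii), both up to $\ell^\star(|b_n|)^{o(1)}$. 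Under V1--V2, I would replace this Tauberian step by a Flajolet--Odlyzko singularity analysis~\cite{cf:FO}: the conditions V1--V2 transfer from $L$ to its de Haan antiderivative $\ell^\star$, then to the centering $b_n$, and finally to the generating function itself, so that Cauchy's integral formula on a Hankel contour upgrades the $o(1)$ exponent into a bona fide slowly varying factor $\tilde L(\ell^\star(|b_n|))$.

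\textbf{Main obstacle.} The hard part, by contrast with Theorem~\ref{thm:ladder} and with the Borovkov framework of~\cite{cf:BorI}, is that $\bP(S_k<0)$ is neither summable nor bounded away from a constant: in case (i) $\sigma_n$ is essentially $\log n$ minus a slowly varying quantity, so one extracts a cancellation between two divergent sums. To justify this I will need uniform $o(1)$ error bounds on the estimate $\bP(S_k\ge 0) \sim \frac{p}{p-q} L(|b_k|)/\ell^\star(|b_k|)$, valid over a range of $k$ growing with $n$ (including moderate $k$ for which $|b_k|/a_k$ is not yet very large); these will come from the strengthened Fuk--Nagaev inequality~\eqref{eq:fuknag1}. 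Propagating this uniformity through the Tauberian inversion, and in particular through the Flajolet--Odlyzko step needed to turn $o(1)$ into $\tilde L(\ell^\star(|b_n|))$ under V1--V2, is where the bulk of the technical work lies.
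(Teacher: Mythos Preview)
Your overall route matches the paper's: the Wiener--Hopf identity~\eqref{eq:Feller}, the asymptotic $\bP(S_k\gtrless 0)\sim c\,L(|b_k|)/\ell^\star(|b_k|)$ from Theorem~\ref{thm:largedev} (this is Lemma~\ref{lem:P<0bis}), and then the passage to the derivative $p'(s)=f'(s)e^{f(s)}$ in case~(i) to generate the extra factor $L/\ell^\star$ and the shift by $-1$ in the exponent. Two points where you diverge from the paper are worth noting.

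First, your ``main obstacle'' is a phantom. In case~(i) the paper works directly with $f(s)=\sum_{m\ge 1} m^{-1}s^m\bP(S_m\ge 0)$, for which $\sum_{k\le n}k^{-1}\bP(S_k\ge 0)\sim -\tfrac{p}{p-q}\log\ell^\star(|b_n|)$ is a single well-defined divergent sum; your $\sigma_n$ and $\log n$ never need to be separately controlled and then subtracted. The pointwise asymptotic of $\bP(S_k\ge 0)$ as $k\to\infty$ plus the Abelian direction of Corollary~1.7.3 in~\cite{cf:BGT} already gives $f(s)$ and $f'(s)$; no uniformity from~\eqref{eq:fuknag1} is required here. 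The genuine work is rather in the coefficient extraction (Sections~\ref{sec:p<q}--\ref{sec:p>q}): because $p'(s)$ is only known up to an $\ell^\star(\cdot)^{o(1)}$ factor, the paper cannot invoke a standard Tauberian theorem and instead does upper and lower bounds by hand, evaluating at $s=1-1/n$ and $s=1-1/(n t_n)$ with $t_n=\ell^\star(|b_n|)^{\gep}$ and exploiting the monotonicity of $p_n$.

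Second, under V1--V2 the paper does \emph{not} run a Hankel-contour analysis on $p(s)$. It builds an explicit model $g(s)=\log\ell^\star(\tilde b_{1/(1-s)})$ with $\tilde b$ defined through~\eqref{def:btilde}, applies \cite[Thm.~5]{cf:FO} only to $g'(s)$ to show its Taylor coefficients satisfy $g_n\sim \tfrac{q-p}{p}\,n^{-1}\bP(S_n\ge 0)$, writes $f(s)=\tfrac{p}{q-p}g(s)+\sum v_ns^n$ with $v_n=o(g_n)$, and finally proves by a direct three-range estimate (Lemma~\ref{lem:slowlyvar}) that $\exp(\sum v_ns^n)$ is of the form $\tilde L(\ell^\star(\tilde b_{1/(1-s)}))$. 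Your proposed direct singularity analysis of $p(s)$ would require checking that $p(s)$ itself satisfies the FO analyticity and growth hypotheses, which is not obvious since $p(s)$ is an exponential of a series whose coefficients are only known asymptotically.
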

We prove this theorem in Section~\ref{sec:mu=0}, where we discuss also the case $p=q$---it is treated similarly to Theorem~\ref{thm:ladder}-(i), see in particular \eqref{eq:pqmu0}.

\subsection{Renewal theorems}

An interesting application of the local limit Theorems~\ref{thm:localLD}-\ref{thm:localDoney} is that we are able
to obtain renewal theorems for transient random walks $(S_n)_{n\ge 1}$: we give the behavior, as $x\to+\infty$, of the Green function $G(x) = \sum_{n} \bP(S_n=x)$.

When the step variable $X_1$ is positive, $(S_n)_{n\ge 0}$ is a renewal process, and we write $S=\{S_0, S_1, S_2, \ldots\}$ the set of renewal points (with a slight abuse of notations).
Then $G(x)$ is interpreted as the renewal mass function $ \bP(x \in S)$, and has been studied in a variety of papers. The well-known renewal theorem gives that whenever $X_1 > 0$ and $\mu=\bE[X_1]<+\infty$, then $\bP(x\in S) \to 1/\mu$ as $x\to+\infty$. Assuming additionally that \eqref{def:tail} holds with $\ga\in (0,1]$ (and necessarily $p=1,q=0$ since $X_1 > 0$), then 
Garcia and Lamperti \cite{cf:GL} showed the strong renewal theorem
\begin{equation}
\label{eq:GL}
\bP(x\in S) \stackrel{x\to+\infty}{\sim}   \frac{\alpha\sin(\pi \alpha)}{\pi}  L(x)^{-1} x^{-(1-\ga)} \qquad \text{ if }\ga\in(1/2,1) ;
\end{equation}
Erickson \cite{cf:Eric} also proved that, 
\begin{equation}
\label{eq:Eric}
\bP(x\in S) \stackrel{x\to+\infty}{\sim} \frac{1}{\mu(x)} \qquad  \text{ if }  \ga=1  \text{ with } |\mu|=+\infty . 
\end{equation}
Finally,  Caravenna and Doney \cite{cf:CD} gave very recently a necessary an sufficient condition for the above strong renewal theorem \eqref{eq:GL} to hold when $\ga\in(0,1/2]$. 

\smallskip

When $(S_n)_{n\ge 1}$ is a (general) random walk rather than a renewal process, the Green function $G(x)$ has been considered in the case $\alpha\in(0,1)$ for example in \cite{cf:CD,cf:Will}, and when $X_1$ has a finite mean (see Theorem~\ref{thm:renewalfinite} below).
We shall prove renewal theorems in the case $\ga=1$, under some specific assumptions that ensures the transience of the random walk. We comment further in Section \ref{sec:alpha1} the particularity of the case $\ga=1$, $|\mu|=+\infty$, where even the question of recurrence/transience of $(S_n)_{n\ge1}$ is subtle.

The first renewal theorem we get is in the ``centered'' case $p=q=1/2$.
\begin{theorem}
\label{thm:renewcentered}
Assume that \eqref{def:tail} holds with $\ga=1$ and $p=q=1/2$, and that $b:=\lim_{n\to+\infty} b_n /a_n$ exists, $b\in\bbR$. Assume also that $\sum_{n\ge 1} \frac{1}{n L(n)} <+\infty$. Then $S_n$ is transient, and
\[G(x) \stackrel{n\to+\infty}{\sim}  \frac{2}{\pi (1+(2b)^2)} \sum_{n>x} \frac{1}{n L(n)} \, , \]
which is  a slowly varying function vanishing as $x\to+\infty$.
\end{theorem}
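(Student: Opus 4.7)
The plan is to proceed in two parts: first establish transience, then extract the precise asymptotic of $G(x)$ through a decomposition based on the size of $a_n$ relative to $x$. Throughout, let $\psi(x):=\sum_{n>x}\frac{1}{nL(n)}$; by Karamata's theorem $\psi$ is slowly varying with $\psi(x)\to 0$, and a crucial property will be $\psi(x)L(x)\to+\infty$ as $x\to+\infty$, following from $\psi'(x)\sim -1/(xL(x))$ together with slow variation of $\psi$. This property ensures that every quantity of order $1/L(x)$ is genuinely $o(\psi(x))$, which is what drives the error analysis.

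Transience will follow at once from the local limit theorem \eqref{eq:LLT} applied at $x=0$: since $b_n/a_n\to b$ and $g$ is continuous, $\bP(S_n=0)\sim g(-b)/a_n$. Combining the defining relation $L(a_n)\sim a_n/n$ with slow variation gives $a_n\sim nL(n)$ up to asymptotic equivalence, so summability of $1/(nL(n))$ yields $\sum_n\bP(S_n=0)<+\infty$ and hence transience of $(S_n)_{n\geq 0}$.

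For the asymptotic, fix a small $\delta>0$ and split $G(x)=A_\delta(x)+B_\delta(x)$ according to whether $a_n\leq x\delta$ or $a_n>x\delta$. To bound $A_\delta$, apply Theorem~\ref{thm:localLD}: the estimate $\bP(S_n=x)\leq CnL(x)/(a_nx)$ (valid for $x\geq a_n$), summed using $n/a_n\sim 1/L(n)$ together with the Karamata estimate $\sum_{n\leq N}1/L(n)\sim N/L(N)$ at $N\sim x\delta/L(x)$, yields $A_\delta(x)=O(1/L(x))=o(\psi(x))$. For $B_\delta$, apply \eqref{eq:LLT} uniformly to write $\bP(S_n=x)=a_n^{-1}g((x-b_n)/a_n)+o(a_n^{-1})$; the resulting LLT error, summed over $n$ with $a_n>x\delta$, is $o(\psi(x))$ since the pointwise error vanishes beyond the growing threshold while $\sum_{n:a_n>x\delta}1/a_n=O(\psi(x))$.

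The main term in $B_\delta$ will be evaluated by a Riemann-sum/integral approximation: using that $(a_n)$ has ``density'' $\sim 1/L(u)$ per unit $u$ and substituting $v=x/a_n$, the sum is asymptotic to $\int_0^{1/\delta}g(v-b)/(vL(x/v))\,dv$. Splitting at $v=\eta$ small and substituting $w=x/v$ inside transforms the inner piece into $\int_{x/\eta}^\infty g(x/w-b)/(wL(w))\,dw$, which equals $g(-b)\psi(x/\eta)+O(\omega(\eta)\psi(x))$ with $\omega(\eta)\to 0$ (by uniform continuity of $g$), and $\psi(x/\eta)\sim\psi(x)$ by slow variation; the range $v\in[\eta,1/\delta]$ contributes $O(1/L(x))=o(\psi(x))$ since $L(x/v)\sim L(x)$ there. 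Letting first $\eta\to 0$ and then $\delta\to 0$ delivers $G(x)\sim g(-b)\psi(x)$, and the explicit value $g(-b)=2/(\pi(1+(2b)^2))$ is identified from the density of the symmetric Cauchy limit law of $(S_n-b_n)/a_n$. The hard part will be the precise slow-variation bookkeeping: showing both the LLT error and the intermediate-$v$ contribution are $o(\psi(x))$ rather than merely $O(\psi(x))$, which ultimately relies on $1/L(x)=o(\psi(x))$ secured by the integrability hypothesis.
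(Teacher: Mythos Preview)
Your approach is correct and follows the same strategy as the paper: split $G(x)$ into a small-$n$ part controlled by the local large deviation Theorem~\ref{thm:localLD} (giving $O(1/L(x))=o(\psi(x))$) and a large-$n$ part handled by the local limit theorem~\eqref{eq:LLT}, with the key observation $1/L(x)=o(\psi(x))$ driving the error analysis. The paper's execution is a bit more direct: it places the cutoff at $k=\varepsilon^{-1}k_x$ (i.e.\ $a_k\sim x/\varepsilon$) rather than your $a_n\sim x\delta$, so that in the main regime one already has $x/a_k<\varepsilon$ and hence $g((x-b_k)/a_k)=g(-b)+O(\eta)$ immediately---this avoids your Riemann-sum conversion and the secondary split at $v=\eta$, but the content is the same.
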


In the case where \eqref{def:tail} holds with $\ga=1$, $|\mu|=+\infty$ and $p\neq q$, we need the extra assumptions \eqref{hyp:localtail1}-\eqref{hyp:localtail2} to be able to derive a renewal theorem ---otherwise it is not even clear if $(S_n)_{n\ge 0}$ is transient, see Proposition~\ref{prop:transient2}. Recall that in that case, we have $\mu(x) \sim (p-q) \ell(x)$ (and goes to $+\infty$ if $p>q$ and $-\infty$ if $p<q$).
\begin{theorem}
\label{thm:renewal}
Assume that \eqref{def:tail} holds with $\ga=1$ $|\mu| = +\infty$, and $p\neq q$. Assume additionally that  \eqref{hyp:localtail1}-\eqref{hyp:localtail2} hold. Then $S_n$ is transient, and we have that $G(x) = O \big( 1/|\mu(x)|\big)$ and also $\liminf G(x)/\mu(x) \ge 1 $ if $p>q$.

\begin{enumerate}
\item[(i)] If $p>q$ and $\bP( X_1 = -x) \stackrel{x\to+\infty}{\sim}  q L(x) x^{-2}$ (if $q=0$, we interpret this as $o( L(x) x^{-2})$), then 
\begin{equation}
\label{eq:renewalthm}
G(x)\stackrel{x\to+\infty}{\sim} \frac{1+q/(p-q)}{\mu(x)} \, . 
\end{equation}

\item[(ii)] If $p<q$ and $\bP( X_1 = x) \stackrel{x\to+\infty}{\sim}  p L(x) x^{-2}$ (if $p=0$, we interpret this as $o( L(x) x^{-2})$), then 
\begin{equation}
G(x)\stackrel{x\to+\infty}{\sim} \frac{p/(q-p)}{|\mu(x)|} \, . 
\end{equation}
\end{enumerate}
\end{theorem}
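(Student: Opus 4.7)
The strategy is to decompose $G(x)=\sum_{n\ge 0}\bP(S_n=x)$ around the typical scale $n(x)\sim x/\mu(x)$ at which $b_n\sim x$. Fix a large parameter $K>0$; I split
\[
G(x)=I_-(K,x)+I_0(K,x)+I_+(K,x),
\]
collecting respectively the indices $n$ with $x-b_n>K a_n$, with $|x-b_n|\le K a_n$, and with $b_n-x>K a_n$. I will prove $\mu(x) I_0(K,x)\to \int_{-K}^{K} g(z)\dd z$, $\mu(x) I_+(K,x)\to q/(p-q)$, and $\mu(x) I_-(K,x)=O(1/K)+o(1)$ uniformly in $K$; letting $K\to\infty$ then gives $G(x)\sim (1+q/(p-q))/\mu(x)$.

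For the central range $I_0$, the local limit theorem \eqref{eq:LLT} gives $\bP(S_n=x)=a_n^{-1}g(z_n)+o(a_n^{-1})$ with $z_n:=(x-b_n)/a_n$. In this range $a_n\sim a_{n(x)}$ and $b_{n+1}-b_n\sim \mu(a_n)\sim \mu(x)$, so the increments $z_{n+1}-z_n\sim -\mu(x)/a_{n(x)}$ are constant and tend to $0$; the resulting Riemann sum converges to $\mu(x)^{-1}\int_{-K}^{K}g(z)\dd z$, while the $o(1/a_n)$ error summed over the $O(K a_{n(x)}/\mu(x))$ indices is $o_K(1/\mu(x))$. For the left tail $I_-$, I invoke the upper bound \eqref{eq:localLdoney} of Theorem~\ref{thm:localDoney} (which requires only \eqref{hyp:localtail1}): $\bP(S_n=x)\le Cn L(x-b_n)(x-b_n)^{-2}$ whenever $x-b_n\ge a_n$. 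Splitting at $b_n=x/2$, the contribution from $b_n\le x/2$ is at most $\sum_{n\le n(x)/2}Cn L(x)/x^2=O(L(x)/\mu(x)^2)=o(1/\mu(x))$, and a Karamata computation (the integral $\int_{Ka_{n(x)}}^{x/2}L(y)/y^2\dd y\sim L(Ka_{n(x)})/(Ka_{n(x)})$) on the range $b_n\in[x/2,x-Ka_n]$ yields $O(1/(K\mu(x)))$.

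For the right tail $I_+$ I use the sharper asymptotic \eqref{eq:localLdoney2} applied to the negative tail (by hypothesis, $\bP(X_1=-y)\sim qL(y)/y^2$): uniformly for $b_n-x\ge K a_n$ with $K\to\infty$,
\[
\bP(S_n=x)\sim nq\, L(b_n-x)(b_n-x)^{-2}.
\]
Changing variables to $y=b_n-x$ with $\dd n\sim \dd y/\mu(a_n)$, and using that for large $n$ one has $b_n\sim n(p-q)\ell(a_n)$, $a_n\sim nL(a_n)$, and hence $\mu(a_n)\sim (p-q)\ell(a_n)\sim (p-q)\ell(y)$, one reaches
\[
I_+(K,x)\sim \int_{Ka_{n(x)}}^{+\infty}\frac{q\, L(y)}{(p-q)^2\, y\,\ell(y)^2}\dd y =\frac{q}{(p-q)^2\, \ell(Ka_{n(x)})}\sim \frac{q/(p-q)}{\mu(x)},
\]
using the primitive $\mathrm{d}(-1/\ell)=L(y)/(y\ell(y)^2)\dd y$ together with $\ell(Ka_{n(x)})\sim \ell(x)\sim \mu(x)/(p-q)$. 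Notice that the limit is independent of $K$ to leading order, because $\ell$ is slowly varying.

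Combining the three pieces yields the claimed equivalent in part (i); part (ii) follows by symmetry applied to $(-S_n)_{n\ge 0}$. The bound $G(x)=O(1/|\mu(x)|)$ holds without the refined tail hypothesis: simply replace the asymptotic in $I_+$ by the upper bound coming from \eqref{eq:localLdoney} and \eqref{hyp:localtail2}. The inequality $\liminf G(x)\mu(x)\ge 1$ when $p>q$ follows from the contribution of $I_0$ alone after letting $K\to\infty$. Transience is a by-product of the same argument applied at $x=0$: $\sum_n\bP(S_n=0)\asymp \sum_{n\ge 1} nqL(b_n)/b_n^2<+\infty$ by the identical Karamata computation (plus a trivial bound for small $n$), so that $G(0)<+\infty$. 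The main obstacle is controlling rigorously the Riemann-sum-to-integral passage in $I_+$ over the unbounded range $[Ka_{n(x)},+\infty)$: the effective Jacobian $\mu(a_n)^{-1}$ varies substantially with $n$, so the substitution $\mu(a_n)\sim (p-q)\ell(b_n-x)$ has to be justified uniformly, which is where the de Haan-type properties of $\ell$ collected in Section~\ref{sec:alpha1} play a crucial role.
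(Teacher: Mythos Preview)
Your approach for part (i) is essentially the paper's: decompose $G(x)$ around $n(x)$ (defined by $b_{n(x)}=x$), use the local limit theorem \eqref{eq:LLT} in the central window, and Theorem~\ref{thm:localDoney} elsewhere. The paper uses a five-piece split (your $I_-$ is its I+II, your $I_0$ is its III, your $I_+$ is its IV+V), and the extra cuts are precisely what make the $I_+$ computation rigorous. Your single change of variables $y=b_n-x$ with Jacobian $\mu(a_n)^{-1}\sim((p-q)\ell(y))^{-1}$ is not uniformly valid near the lower endpoint $y\sim Ka_{n(x)}$: there $\mu(a_n)\sim(p-q)\ell(x)$ while $(y+x)/y^2\sim x/y^2$, not $1/y$. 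The paper handles this by isolating the range $n\in[n(x)+Ka_{n(x)}/\mu(x),\,2n(x)]$ (its term IV) and bounding it directly by $O(K^{-1}/\mu(x))$ via \eqref{eq:localLdoney}, then applying the sharp asymptotic \eqref{eq:localLdoney2} only on $n\ge 2n(x)$ where $b_n-x\asymp b_n$ and the substitution is honest. You correctly flag this as the main obstacle, but the fix is exactly this further splitting rather than any de~Haan subtlety.

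There is a genuine gap in your treatment of part (ii). The symmetry $S_n\mapsto -S_n$ sends case (ii) to case (i) with $\tilde p=q>\tilde q=p$, but it also sends $G(x)$ to $\tilde G(-x)=G(x)$; in other words, you would need the asymptotics of $\tilde G$ at \emph{negative} arguments, which your case (i) argument (built around $x>0$, $b_n\to+\infty$, and a nonempty central window $I_0$) does not provide. Concretely, in case (ii) one has $b_n\to-\infty$, so for $x>0$ the set $I_+$ is empty and $I_0$ is finite: all the mass sits in what you called $I_-$, for which you only proved an $O(1/(K\mu(x)))$ \emph{upper} bound. The paper instead argues directly: with $k_x$ defined by $b_{k_x}=-x$, the range $k\le \gep^{-1}k_x$ contributes $o(1/\ell(x))$ by \eqref{eq:localLdoney}, and on $k>\gep^{-1}k_x$ one has $x-b_k\in[|b_k|,(1+2\gep)|b_k|]$, so \eqref{eq:localLdoney2} with the right-tail hypothesis $\bP(X_1=x)\sim pL(x)x^{-2}$ yields $\bP(S_k=x)\sim pkL(|b_k|)|b_k|^{-2}$; summing as in \eqref{restsum} gives $p/((q-p)|\mu(x)|)$. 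This is simpler than case (i) (two pieces, no central window), but it is a separate argument, not a consequence of symmetry.
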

We therefore recover Erickson's result \eqref{eq:Eric} in the case of general random walks, at the expense of assumptions \eqref{hyp:localtail1}-\eqref{hyp:localtail2} (in the case of renewals we get $q=0$).

Let us make a short comment on \eqref{eq:renewalthm}. When $p>q$ (so that $b_n \to+\infty$) the behavior in \eqref{eq:renewalthm} comes from two types of  contribution to  $G(x) = \sum_{k=1}^{+\infty} \bP(S_k=x)$: when the number of steps is of the order of $k_x$ verifying $b_{k_x} = x$ (so that $S_{k_x} $ is approximately $x$), and when the number of steps is much larger ($S_k$ is much larger than $x$, but large jumps to the left still occur). For the first part, we prove that it is asymptotic to $1/\mu(x)$. For the second part, we are able to prove that it is $O(1/\mu(x))$ under \eqref{hyp:localtail2}, and we can get its asymptotic behavior if $\bP( X_1 = -x) \stackrel{x\to+\infty}{\sim}  q L(x) x^{-2}$.

\smallskip
Finally, for the sake of completeness, we state a renewal theorem in the case of a finite non-zero mean.
Recall that if $X_1$ is integrable with $\mu=0$, then the walk is recurrent, \textit{i.e.}\ $G(x) = +\infty$ for all $x\in \bbZ$.

\begin{theorem}
\label{thm:renewalfinite}
Assume that $X_1$ is integrable, with $\mu=\bE[X_1]\neq 0$.

{\rm (i)} If $\mu>0$, we have that 
$\lim_{x\to+\infty} G(x)  =1/\mu.$

{\rm (ii)} If $\mu<0$, let $S^* = \sup_{i \ge 0} S_i <+\infty$ and assume that $H(x) = \bP(S^*\leq x)$ is subexponential (\textit{i.e.}\ $\overline{H*H}(x)\sim 2 \overline{H}(x)$ as $x\to +\infty$, where $\overline{H} = 1-H$). Then 
\[
G(x) \simx \frac{1}{|\mu|} \bP\big(  S^*  > x\big) \, .
 \]
If the integrated distribution function $I(x)= (1-\int_x^{+\infty} \bP(X_1>t) \dd t)_+$  is subexponential (as it is the case in \eqref{def:tail}), Veraverbecke's theorem \cite{cf:Ver, cf:Zach} gives that  $\bP(S^* >x) \sim I(x) / |\mu|$.
\end{theorem}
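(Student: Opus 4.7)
The plan is to use the classical ladder decomposition of the Green function. Let $T_+=\inf\{n\ge 1 : S_n>0\}$, denote by $(\tilde H_k^+)_{k\ge 0}$ the iterated strict ascending ladder heights (with $\tilde H_0^+=0$), and set
\[
V_+(y) := \sum_{k\ge 0} \bP(\tilde H_k^+ = y) \quad (y\ge 0), \qquad U(z) := \sum_{m\ge 0} \bP(S_m=z, T_+>m) \quad (z\le 0).
\]
Partitioning each visit of $(S_n)_{n\ge 0}$ to a level $x\ge 0$ according to the most recent strict ascending ladder epoch and using the strong Markov property, I first establish the identity
\begin{equation}
\label{eq:plan-decomp}
G(x) \;=\; \sum_{z\ge 0} V_+(x+z)\,U(-z)\,,
\end{equation}
together with $\sum_{z\ge 0} U(-z) = \bE[T_+]$ (by Fubini), and $\bE[\tilde H_1^+] = \mu\bE[T_+]$ via Wald combined with Spitzer's identity $\bE[T_+] = 1/\bP(T_-=\infty)$ (valid when $\mu>0$).

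For part (i) with $\mu>0$, both $\bE[T_+]$ and $V_+$ are non-defective and finite, and the classical (one-sided) renewal theorem gives $V_+(y) \to 1/\bE[\tilde H_1^+]$ as $y\to+\infty$. Since $U(-\cdot)$ is summable with total mass $\bE[T_+]<\infty$ and $V_+$ is uniformly bounded, dominated convergence in \eqref{eq:plan-decomp} delivers $G(x) \to \bE[T_+]/\bE[\tilde H_1^+] = 1/\mu$.

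For part (ii) with $\mu<0$, the ascending ladder process is geometrically killed: setting $\pi := \bP(T_+<\infty) \in (0,1)$, one has $\bP(S^*=y) = (1-\pi) V_+(y)$, so \eqref{eq:plan-decomp} becomes $(1-\pi)\,G(x) = \sum_{z\ge 0} \bP(S^*=x+z)\, U(-z)$. A key intermediate step is the asymptotic
\[
U(-z) \;\xrightarrow[z\to+\infty]{}\; \frac{1-\pi}{|\mu|}\,,
\]
which I plan to derive from the strong-Markov identity $U(-z) = G(-z) - \sum_{y>0} \bP(S_{T_+}=y)\, G(-z-y)$ together with part (i) applied to $(-S_n)_{n\ge 0}$ (which has drift $|\mu|>0$, and hence $G(v)\to 1/|\mu|$ as $v\to-\infty$), followed by dominated convergence. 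Writing $U(-z) = (1-\pi)/|\mu| + \epsilon(z)$ with $\epsilon(z)\to 0$, this reduces the problem to proving
\[
(1-\pi)\,G(x) - \frac{1-\pi}{|\mu|}\,\overline{H}(x-1) \;=\; \sum_{z\ge 0} \epsilon(z)\,\bP(S^*=x+z) \;=\; o\big(\overline{H}(x)\big)\,.
\]
The main obstacle is this last estimate, and it is where subexponentiality of $H$ is essential: it implies the long-tailed property $\overline{H}(x+a)/\overline{H}(x) \to 1$ for every fixed $a$, which in turn gives $\bP(S^*=x+z) = \overline{H}(x+z-1) - \overline{H}(x+z) = o(\overline{H}(x))$ for each fixed $z$. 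A standard truncation at a level $z_0$ then handles the error sum: the low-$z$ piece is a finite sum of $o(\overline{H}(x))$ terms, and the tail is bounded by $\sup_{z>z_0}|\epsilon(z)|\cdot \overline{H}(x+z_0-1) \sim \sup_{z>z_0}|\epsilon(z)|\cdot \overline{H}(x)$, which vanishes upon letting $z_0\to+\infty$. Together with $\overline{H}(x-1) \sim \overline{H}(x)$, this yields $G(x) \sim \overline{H}(x)/|\mu|$; the final Veraverbeke assertion follows by substituting the known $\overline{H}(x) \sim I(x)/|\mu|$.
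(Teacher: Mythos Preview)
Your proof is correct and takes a genuinely different route from the paper's. The paper argues more directly via the first-passage time $\tau_x=\inf\{n\ge 1:S_n\ge x\}$: by the strong Markov property,
\[
G(x)=\sum_{y\ge x}\bP(S_{\tau_x}=y,\,\tau_x<\infty)\,G(x-y),
\]
and since part~(i) applied to $(-S_n)$ gives $G(x-y)\to 1/|\mu|$ as $y-x\to\infty$, one splits the sum according to whether the overshoot $y-x$ is below or above $\gep_x x$ (with $\gep_x\to 0$, $\gep_x x\to\infty$); the large-overshoot part yields $\frac{1}{|\mu|}\bP(S^*>(1+\gep_x)x)$, while the small-overshoot part is $O\big(\bP(S^*\in[x,(1+\gep_x)x])\big)=o(\bP(S^*>x))$ by subexponentiality. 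You instead invoke the Wiener--Hopf factorisation $G=V_+*U$, identify $(1-\pi)V_+$ with the law of $S^*$, and prove $U(-z)\to(1-\pi)/|\mu|$; the long-tailed property of $H$ then handles the resulting error sum. Both arguments rest on the same two ingredients---part~(i) for the reflected walk, and the long-tailed consequence of subexponentiality---but the paper's version is shorter and avoids ladder variables entirely, whereas yours is more structural and, as a bonus, supplies a self-contained proof of part~(i) rather than quoting Feller. One minor point: the identity you cite as $\bE[T_+]=1/\bP(T_-=\infty)$ is really the duality with the \emph{weak} descending ladder epoch; this does not affect your argument, since all you actually need is $\bE[T_+]<\infty$ and Wald's identity $\bE[S_{T_+}]=\mu\,\bE[T_+]$.
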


%

The case $\mu>0$ can be found in \cite[XI.9]{cf:Feller}.  For the case $\mu<0$, we were not aware of a reference (even if it must exist), so we prove it in Section~\ref{sec:finitemeanrenewal}, via elementary methods.

\section{Further discussion and useful estimates in the case $\ga=1$}
\label{sec:alpha1}

In this section, we focus on the case $\ga=1$, and we discuss the subtleties that might arise. 
One of the main difficulty is that the recentering term $b_n$ is not \emph{homogeneous}: the \emph{recentered} walk
$S_n-b_n$ is a sum of i.i.d.\ \emph{recentered} random variables, but that recentering depends on $n$:
$S_n - b_n= \sum_{i=1}^n (X_i -\mu(a_n) )$.
Hence, we are not able to simplify the problem by studying a random walk with \emph{centered} increments, as it is customary when $\ga>1$.

We focus on the case when $|\mu|=+\infty$ for a moment, for the simplicity of exposition (analogous reasoning holds when $\mu=0$).
Recall the definition \eqref{def:ell} of $\ell(\cdot)$, and let us discuss the behavior of the centering constant $b_n$.

$\ast$  If $p>q$, then $b_n \sim (p-q) n \ell(a_n)$: we get that $b_n/a_n \to +\infty$, since $\ell(x)/L(x)\to+\infty$ and $a_n \sim n L(a_n)$. Hence $S_n/b_n$ converges in probability to $1$, and $S_n$ goes in probability to $+\infty$, even though Proposition~\ref{prop:drift} tells that $S_n$ does not drift to $+\infty$.

$\ast$ If $p=q$ then it is more tricky, and we can have a variety of possible behaviors: $b_n = o(a_n)$ (we may set $b_n \equiv 0$); $0< \limsup_{n\to+\infty} |b_n|/a_n <+\infty$;  $\lim_{n\to+\infty} b_n/a_n =+\infty$ (but still $b_n =o(n \ell(a_n))$); and it is not excluded that $ \limsup_{n\to+\infty} b_n/a_n =+\infty$ and $ \liminf_{n\to+\infty} b_n/a_n =+\infty$.

\subsection{About the transience/recurrence of $S_n$}
\label{sec:transient}

We recall that in the case of a finite mean, $S_n$ is transient if $\mu\neq 0$ (by the strong law of large numbers), and recurrent if $\mu=0$.
In the case $\ga=1$ with $\mu=+\infty$ (and $p,q\neq 0$), the random walk is shown not to drift neither to $+\infty$ nor $-\infty$ (Proposition~\ref{prop:drift}):
the central (and subtle) question is therefore to know whether the random walk is transient or recurrent. 
Let us  consider the Green function at~$0$, $G(0):=\sum_{n} \bP(S_n=0)$.

\subsubsection*{First case}
If  $\sup_n |b_n|/a_n <+\infty$, then $\bP(S_n=0)$ is of order $1/a_n$: by the local limit theorem \eqref{eq:LLT} $a_n \bP(S_n =0) = (1+o(1)) g(-b_n/a_n)$, and it is therefore bounded away from~$0$ and infinity. We conclude that the walk is transient if and only if $\sum (a_n)^{-1} <+\infty$, and Wei \cite{cf:Wei} gives another characterization.
\begin{proposition}
\label{prop:transient1}
If \eqref{def:tail} holds with $\ga=1$, and  
$\sup_{n} |b_n| / a_n <+\infty$ (necessarily $p=q$), then
\[ (S_n)_{n\ge 0} \text{ is transient }\quad \Longleftrightarrow \quad \sum_{n=1}^{+\infty} \frac{1}{a_n} <+\infty  \quad \Longleftrightarrow \quad  \sum_{n=1}^{+\infty} \frac{1}{n L(n)} <+\infty \, . \]
\end{proposition}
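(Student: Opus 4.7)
The argument rests on three ingredients: the local limit theorem \eqref{eq:LLT} to pin down $\bP(S_n=0)$, the classical criterion ``$(S_n)$ transient iff $G(0):=\sum_n\bP(S_n=0)<+\infty$'', and an integral substitution converting $\sum 1/a_n$ into $\sum 1/(nL(n))$.

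The hypothesis $\sup_n|b_n|/a_n<+\infty$ together with $\ga=1$ forces $p=q$: if $|\mu|=+\infty$ and $p\neq q$ one would have $|b_n|/a_n\sim|p-q|\ell(a_n)/L(a_n)\to+\infty$ (and analogously with $\ell^\star$ if $\mu=0$), while a finite $\mu\neq 0$ is ruled out because $L(a_n)\to 0$ is forced by $\bE|X_1|<+\infty$ and makes $|b_n|/a_n=|\mu|/L(a_n)\to+\infty$. Hence the limiting $1$-stable density $g$ is a (possibly translated) symmetric Cauchy density, continuous and strictly positive on $\bbR$. Since the sequence $\{-b_n/a_n\}$ is bounded, $g(-b_n/a_n)$ lies between two positive constants, so \eqref{eq:LLT} gives $\bP(S_n=0)\asymp 1/a_n$, and summing and invoking the Chung--Fuchs criterion yields the first equivalence.

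For the second equivalence, \eqref{def:an} reads $a_n\sim nL(a_n)$, so $n\mapsto a_n$ is asymptotically inverse to $\phi(x):=x/L(x)$. Passing to a differentiable interpolation $a(\cdot)$ and using Karamata's representation to replace $L$ by a normalized version with $xL'(x)/L(x)\to 0$, one has $\phi'(x)\sim 1/L(x)$ and hence $a'(t)\sim L(a(t))$. The substitution $x=a(t)$, for which $dx=a'(t)\,dt\sim L(x)\,dt$, transforms $\int^{+\infty}\!dt/a(t)$ into $\int^{+\infty}\!dx/(xL(x))$, and the integral test concludes.

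The genuine subtlety is this second equivalence. A naive approach would try to derive it from $L(a_n)\sim L(n)$, which would give $1/a_n\sim 1/(nL(n))$ at once; but this already fails for $L(x)=e^{\sqrt{\log x}}$ (where $L(a_n)/L(n)\to e^{1/2}\neq 1$), and for faster-growing slowly varying $L$ the ratio can diverge to $+\infty$. The integral substitution sidesteps this issue by exchanging $a(t)$ for $x$ globally rather than relying on pointwise asymptotic equivalence of $L$ at $a_n$ versus $n$.
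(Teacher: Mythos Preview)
Your proof is correct and, for the first equivalence, matches the paper's argument: the paper simply observes that under $\sup_n |b_n|/a_n<+\infty$ the local limit theorem \eqref{eq:LLT} gives $a_n\bP(S_n=0)=(1+o(1))g(-b_n/a_n)$ bounded away from $0$ and $\infty$, so transience is equivalent to $\sum 1/a_n<+\infty$.

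The difference lies in the second equivalence. The paper does not prove it at all---it merely cites Wei \cite{cf:Wei} for ``another characterization''. You instead supply a direct argument: replace $L$ by a normalized (smooth) version with $xL'(x)/L(x)\to 0$, so that $\phi(x)=x/L(x)$ satisfies $\phi'(x)\sim 1/L(x)$, and then the substitution $x=a(t)$ converts $\int dt/a(t)$ into $\int dx/(xL(x))$. This is a clean and self-contained route, and your caution that one cannot simply invoke $L(a_n)\sim L(n)$ (which can fail, e.g.\ for $L(x)=e^{\sqrt{\log x}}$) is a genuinely useful remark. Your justification that $\sup_n|b_n|/a_n<+\infty$ forces $p=q$ is also more explicit than the paper's parenthetical ``necessarily $p=q$''.
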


\subsubsection*{Second Case.} If $ \lim_{n\to+\infty} b_n/a_n \to +\infty$, we have that $S_n \to +\infty$ in probability, but we cannot conclude that $S_n$ is transient, in particular because Proposition~\ref{prop:drift} tells that $\liminf S_n = -\infty$ a.s.
We have $\bP(S_n=0) = \bP(S_n-b_n =-b_n)$ and Theorem \ref{thm:localLD} gives that is is bounded by a constant times $(a_n)^{-1} n \bP(X_1>b_n) \sim p(b_n)^{-1} L(b_n)/L(a_n)$. Hence, a sufficient condition for the walk to be transient is that 
\[\sum_{n\ge 1} (b_n^{-1}) L(b_n)/L(a_n) <+\infty.\]
However Theorem~\ref{thm:localLD} does not provide a lower bound for $\bP(S_n=0)$, so the question of the recurrence/transience cannot be settled. We now give a simple sufficient condition for the transience of the random walk.
 
\begin{proposition}
 \label{prop:transient2}
 Assume that \eqref{def:tail} holds with $\ga=1$, and assume that $|\mu|=+\infty$. If $p>q$ (we then have $b_n/a_n\to+\infty$) and if additionally \eqref{hyp:localtail2} holds, then $S_n$ is transient. 
 \end{proposition}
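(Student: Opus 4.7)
The plan is to show that $G(0):=\sum_{n\geq 1}\bP(S_n=0)<+\infty$, which is equivalent to transience for a $\bbZ$-valued random walk. Since $p>q$ and $|\mu|=+\infty$, the discussion at the beginning of Section~\ref{sec:appli} gives $\mu(x)\sim (p-q)\ell(x)$, hence $b_n=n\mu(a_n)\sim (p-q)\,n\,\ell(a_n)\to +\infty$; moreover $b_n/a_n\to +\infty$, since $a_n\sim nL(a_n)$ and $\ell(x)/L(x)\to +\infty$. In particular, $\lfloor b_n\rfloor\ge a_n$ for all $n$ large enough, which is what is required to plug into the local large deviation estimate.

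The key input is the left-tail version of Theorem~\ref{thm:localDoney}, which applies thanks to assumption~\eqref{hyp:localtail2}. Writing $\bP(S_n=0)=\bP(S_n-\lfloor b_n\rfloor=-\lfloor b_n\rfloor)$ and taking $x=\lfloor b_n\rfloor$ (which is $\ge a_n$ for $n$ large), the $\ga=1$ case of~\eqref{eq:localLdoney} yields, for some constant $C>0$,
\begin{equation*}
\bP(S_n=0)\;\leq\; C\,\frac{n\,L(b_n)}{b_n^{2}} \qquad \text{for all } n \text{ large enough}.
\end{equation*}
Only finitely many initial terms are not covered by this inequality, and they contribute a finite amount to $G(0)$.

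To conclude, it remains to check that $\sum_n n L(b_n)/b_n^{2}<+\infty$. Using $b_n\sim(p-q)n\ell(a_n)$, the bound becomes, up to a multiplicative constant,
\begin{equation*}
\bP(S_n=0)\;\leq\; \frac{C'\,L(b_n)}{n\,\ell(a_n)^{2}}.
\end{equation*}
Since $a_n$ and $b_n$ are regularly varying in $n$ with index $1$ (with slowly varying correction), the uniform convergence theorem for slowly varying functions applied to $L$ and to the de Haan function $\ell$ gives $L(b_n)\sim L(n)\sim L(a_n)$ and $\ell(a_n)\sim\ell(n)$. Thus it suffices to show $\sum_n L(n)/(n\,\ell(n)^{2})<+\infty$. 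But $\ell'(x)=L(x)/x$ by the definition~\eqref{def:ell}, so by comparison with the integral,
\begin{equation*}
\sum_{n\geq n_0}\frac{L(n)}{n\,\ell(n)^{2}}\;\asymp\;\int_{n_0}^{+\infty}\frac{\ell'(x)}{\ell(x)^{2}}\,\dd x\;=\;\frac{1}{\ell(n_0)}\;<\;+\infty,
\end{equation*}
since $\ell(x)\to+\infty$. This gives $G(0)<+\infty$ and hence transience.

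The only delicate point in the argument is not the summation itself but verifying that $L$ and $\ell$ may be evaluated interchangeably at $n$, $a_n$ and $b_n$; this is where we use that $a_n$ and $b_n$ are regularly varying of index $1$, so that $\log a_n/\log n\to 1$ and $\log b_n/\log n\to 1$, which is the standard sufficient condition for a slowly varying function to be asymptotically invariant under the change of argument.
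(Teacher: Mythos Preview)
Your overall strategy coincides with the paper's: apply the left-tail version of Theorem~\ref{thm:localDoney} to bound $\bP(S_n=0)\le C\,nL(b_n)b_n^{-2}$, then reduce to the convergence of $\sum_k L(k)/(k\ell(k)^2)$, which is handled by the substitution $t=\ell(u)$ (your integral $\int \ell'/\ell^2$, or the paper's Lemma~\ref{lem:sumell}-(i) with $f(t)=1/t^2$).

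The gap is in your reduction step. You claim that $L(b_n)\sim L(n)$ and $\ell(a_n)\sim\ell(n)$, justified by ``$\log a_n/\log n\to 1$ and $\log b_n/\log n\to 1$, which is the standard sufficient condition for a slowly varying function to be asymptotically invariant under the change of argument.'' This is false: that condition is \emph{not} sufficient. For instance, take $L(x)=\exp((\log x)^\beta)$ with $\beta\in(1/2,1)$; then $a_n\sim nL(a_n)$ gives $\log a_n-\log n\sim(\log n)^\beta$, hence $(\log a_n)^\beta-(\log n)^\beta\sim\beta(\log n)^{2\beta-1}\to+\infty$ and $L(a_n)/L(n)\to+\infty$, even though $\log a_n/\log n\to 1$. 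The uniform convergence theorem only gives $L(\lambda x)\sim L(x)$ for $\lambda$ in compacta of $(0,\infty)$, and here $b_n/n\sim(p-q)\ell(a_n)\to+\infty$.

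The paper avoids this by not comparing $L(b_n)$ with $L(n)$ at all. It first uses Lemma~\ref{lem:ell} to get $\ell(a_n)\sim\ell(b_n)$ (this \emph{is} true, and is a genuine property of the de Haan function $\ell$, not a generic slow-variation fact), so that the summand becomes $L(b_n)/(b_n\ell(b_n))$. Then a change of variable $k=b_n$, $\dd k\sim(p-q)\ell(b_n)\,\dd n$, turns the sum directly into $\sum_k L(k)/(k\ell(k)^2)$ without ever needing $L(b_n)\sim L(n)$. You should replace your paragraph ``Since $a_n$ and $b_n$ are regularly varying\ldots'' by this change-of-variable argument.
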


 Note that the local assumption \eqref{hyp:localtail2} we need is only on the left tail of $X_1$: since we already know that $S_n\to+\infty$ in probability when $p>q$, we simply need to control the (large) jumps to the left that might make the random walk visit $0$.
\begin{proof}
We use Theorem \ref{thm:localDoney} to get that there is a constant $C$ such that 
\[\bP(S_n = 0) = \bP(S_n - \lfloor b_n \rfloor = -\lfloor b_n \rfloor ) \le C n b_n^{-2}  L(b_n)  \, . \]
Then, to show that $(S_n)_{n\ge 1}$ is transient, and since $b_n= n\mu(a_n)\sim (p-q) n\ell(a_n) \sim (p-q) n\ell(b_n) $ (see Lemma~\ref{lem:ell} below), it is therefore sufficient to show that 
\[\sum_{n=1}^{+\infty} \frac{L(b_n)}{b_n \ell(b_n)} <+\infty \qquad \Longleftrightarrow \qquad   \sum_{k=1}^{+\infty} \frac{L(k)}{k \ell(k)^2} <+\infty \, .\]
The equivalence simply comes from a comparison of the sums with corresponding integral (we may work with differentiable slowly varying functions see \cite[Th.~1.8.2]{cf:BGT}), and a change of variable $k=b_n$, ${ d} k= (p-q)\ell(b_n) { d} n$.
Then Lemma~\ref{lem:sumell}-(i) below (with $f(t)=1/t^2$) shows the summability of the sum on the right-hand side, and concludes the proof.
\end{proof}

\subsubsection*{Other cases}
If for example $\limsup b_n/a_n  = +\infty$ and $\liminf b_n /a_n <+\infty$, it is even less clear, and it seems hopeless to conclude anything without further assumptions.

\subsection{Useful estimates on $\ell(\cdot), \ell^{\star}(\cdot)$}

We now collect a few estimates on the slowly varying function $\ell(\cdot),\ell^\star (\cdot)$ defined in \eqref{def:ell} (they are de Haan functions), which will be central in the proofs of Theorems \ref{thm:ladder} and \ref{thm:renewal}.

\begin{lemma}
\label{lem:ell}
Recall the definition \eqref{def:an} of $a_n$. We have that
\[ \ell(a_n) \sim \ell\big( n \ell(a_n) \big) \qquad \text{ as } n\to+\infty \, .\]
Similarly, we have $\ell^\star(a_n) \sim \ell^\star(n\ell^\star(a_n))$ as $n\to+\infty$.
\end{lemma}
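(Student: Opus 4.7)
The plan is to prove both claims by writing the relevant difference as an integral of $L(u)/u$ and showing it is negligible with respect to $\ell(a_n)$ (resp.\ $\ell^\star(a_n)$) via Potter's bounds on the slowly varying function $L$.

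First, I would observe from \eqref{def:an} that $a_n \sim n L(a_n)$, so the dilation factor
\[ \lambda_n := \frac{n \ell(a_n)}{a_n} \sim \frac{\ell(a_n)}{L(a_n)} \stackrel{n\to+\infty}{\longrightarrow} +\infty, \]
since $\ell(x)/L(x) \to +\infty$ (as noted right after \eqref{def:ell}). Thus $n\ell(a_n) \gg a_n$, so we are \emph{not} in the trivial slowly-varying regime $y/x = O(1)$, which prevents us from invoking the standard asymptotic $\ell(cx)/\ell(x) \to 1$ directly.

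Next, I would perform the change of variable $u = a_n v$ in the integral, yielding
\[ \ell(n \ell(a_n)) - \ell(a_n) = \int_{a_n}^{n\ell(a_n)} \frac{L(u)}{u}\,{\rm d}u = \int_1^{\lambda_n} \frac{L(a_n v)}{v}\,{\rm d}v.\]
By Potter's theorem (\cite[Thm.~1.5.6]{cf:BGT}), for any fixed $\delta \in (0,1)$ and $n$ large enough, $L(a_n v) \le 2 L(a_n) v^\delta$ for all $v \ge 1$; hence the integral is bounded above by $(2/\delta) L(a_n) \lambda_n^\delta$, and the key estimate is
\[L(a_n) \lambda_n^\delta \sim L(a_n)^{1-\delta} \ell(a_n)^\delta = \ell(a_n) \bigl( L(a_n)/\ell(a_n)\bigr)^{1-\delta} = o(\ell(a_n)),\]
since $L(a_n)/\ell(a_n) \to 0$ and $1-\delta > 0$. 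This yields $\ell(n\ell(a_n))/\ell(a_n) \to 1$. The statement for $\ell^\star$ follows from exactly the same computation, with $\lambda_n^\star := n\ell^\star(a_n)/a_n \sim \ell^\star(a_n)/L(a_n) \to +\infty$ and the identity $\ell^\star(a_n) - \ell^\star(n\ell^\star(a_n)) = \int_{a_n}^{n\ell^\star(a_n)} L(u)/u\,{\rm d}u$.

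The only real obstacle is handling the unbounded integration range $[1,\lambda_n]$, which is precisely where Potter's bound replaces the uniform convergence theorem for slowly varying functions (the latter being valid only on compact sets). The reason the argument works is ultimately the de Haan structure: $L$ is the auxiliary function of $\ell$ (and of $\ell^\star$), and $L/\ell \to 0$ rapidly enough that raising $\lambda_n \sim \ell(a_n)/L(a_n)$ to a small power $\delta$ keeps the bound negligible compared to $\ell(a_n)$.
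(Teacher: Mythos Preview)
Your proof is correct. The paper actually omits the proof of this lemma, referring instead to \cite{cf:AA}; your argument---writing $\ell(n\ell(a_n))-\ell(a_n)$ as $\int_{a_n}^{n\ell(a_n)} L(u)/u\,{\rm d}u$ and controlling it with Potter's bound to obtain $O\big(L(a_n)\lambda_n^\delta\big)=o(\ell(a_n))$---is the natural de Haan argument, and is in fact the same technique the paper deploys later in Claim~\ref{claim:mu} (there with $u\le v$ generic rather than $u=a_n$, $v=n\ell(a_n)$). Your identification of the key point, namely that $\lambda_n\to+\infty$ forbids a direct appeal to the uniform convergence theorem and forces Potter's bound, is exactly right.
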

As a consequence, when $\ga =1$ with $|\mu|=+\infty$ (resp.\ $\mu=0$) so that $b_n \sim (p-q) n\ell(a_n)$ (resp.\ $b_n \sim (q-p) n \ell^\star(a_n)$),  if $p \neq q$ we get that 
$\ell(a_n) \sim \ell(b_n)$  (resp.\ $\ell^\star(a_n) \sim \ell^\star(b_n)$).
The proof of Lemma~\ref{lem:ell} is not difficult and can be found in \cite{cf:AA}, so we omit it.

%

The next lemma deals with sums that appear naturally in the course of the proofs.
\begin{lemma}
\label{lem:sumell}
 Consider the case $\alpha=1$, and recall the definitions \eqref{def:ell} of $\ell(\cdot)$, $\ell^\star(\cdot)$.
 Let $f$ be a non-increasing function $\bbR_+^* \to \bbR_+$.
 \begin{enumerate}
 \item[(i)] If $|\mu|=+\infty$ then $\ell(n)\to+\infty$, and denoting $J=\int_1^{+\infty} f(t) dt$ we have as $n\to+\infty$
\[
\begin{split}
\text{if } J <+\infty,\quad \text{ then } & \quad  \sum_{k=n}^{+\infty} \frac{L(k)}{k } f\big(\ell(k) \big)  \sim \int_{\ell(n)}^{+\infty} f(t) dt \to 0  \, ;\\
\text{if } J =+\infty, \quad \text{ then } & \quad \sum_{k=1}^{n} \frac{L(k)}{k } f\big( \ell(k) \big)  \sim \int_1^{\ell(n)} f(t) dt \to +\infty  \, .
\end{split}
\]
\item[(ii)] If $|\mu|<+\infty$ then $\ell^\star(n)\to 0$, and denoting $K=\int_0^{1} f(t) dt$ we have as $n\to+\infty$
\[
\begin{split}
\text{if } K <+\infty,\quad \text{ then } & \quad  \sum_{k=n}^{+\infty} \frac{L(k)}{k } f\big(\ell^\star(k) \big)  \sim   \int_0^{\ell^{\star}(n)} f(t) dt \to 0 \, ;\\
\text{if } K =+\infty, \quad \text{ then } & \quad \sum_{k=1}^{n} \frac{L(k)}{k } f\big( \ell^\star(k) \big)  \sim  \int_{\ell^\star(n)}^1 f(t) dt \to +\infty  \, .
\end{split}
\]
 \end{enumerate}
We therefore have that $\sum_{k\geq 1} k^{-1} L(k) f(\ell(k))$ (resp.\ $\sum_{k\geq 1} k^{-1} L(k) f(\ell^\star(k))$) is convergent if and only if $\int_1^{+\infty} f(t) dt$ (resp.\ $\int_0^1 f(t) dt$) is convergent.
\end{lemma}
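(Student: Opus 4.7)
My plan is to reduce both parts to a change of variable in the integral, followed by a standard sum-integral comparison for a monotone integrand. First, by \cite[Thm.~1.8.2]{cf:BGT} I may assume $L\in C^1$ on $[1,+\infty)$, so that $\ell,\ell^\star\in C^1$ with $\ell'(t)=L(t)/t$ and $(\ell^\star)'(t)=-L(t)/t$. The substitution $u=\ell(t)$ (resp.\ $u=\ell^\star(t)$) then gives directly $\int_a^b \frac{L(t)}{t} f(\ell(t))\,dt = \int_{\ell(a)}^{\ell(b)} f(u)\,du$ (with the orientation reversed in the $\ell^\star$ case), and taking $a=1$, $b=n$ or $b=+\infty$ matches the four integrals appearing in the lemma. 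It thus suffices to show that $\sum_{k} \tfrac{L(k)}{k}f(\ell(k)) \sim \int \tfrac{L(t)}{t} f(\ell(t))\,dt$ (and the $\ell^\star$ analog).

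For the sum-integral comparison, I would observe that $h(t):=f(\ell(t))$ is non-increasing (since $f$ is non-increasing and $\ell$ is increasing); combined with the slow variation of $L$, which yields $L(t)/t=(1+o(1))L(k)/k$ uniformly on $[k,k+1]$, this gives for every $\varepsilon>0$ and $k$ large,
\[ (1-\varepsilon)\tfrac{L(k)}{k}\,h(k+1) \;\leq\; \int_k^{k+1} \tfrac{L(t)}{t}\,h(t)\,dt \;\leq\; (1+\varepsilon)\tfrac{L(k)}{k}\,h(k). \]
Summing, the two-sided bounds differ by the telescoping quantity $\sum_k \tfrac{L(k)}{k}(h(k)-h(k+1))$, which I would handle via Abel summation: because $L(k)/k\to 0$ and $\sum_k(h(k)-h(k+1))$ telescopes to $h(N)$, this error is of smaller order than the main term $\sum_k \tfrac{L(k)}{k} h(k)$. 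This yields $\sum_{k=1}^n \tfrac{L(k)}{k} f(\ell(k)) \sim \int_1^n \tfrac{L(t)}{t} f(\ell(t))\,dt = \int_0^{\ell(n)} f(u)\,du$ in the divergent regime, and the analogous tail statement in the convergent regime. Part~(ii) is treated identically with $\ell^\star$ replacing $\ell$.

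The main obstacle will be the convergent case, in which both sides tend to $0$ and one must verify that the boundary term $\tfrac{L(n)}{n} f(\ell(n))$ arising from Abel summation is genuinely $o\bigl(\int_{\ell(n)}^{+\infty} f(u)\,du\bigr)$ rather than merely bounded. This leans on some mild regularity of the decay of $f$ at infinity---automatic for the regularly varying $f$ encountered in the applications of Sections~\ref{sec:ladder}--\ref{sec:renewals}---together with $L(n)/(n\ell(n))\to 0$, which follows from slow variation (both $L$ and $\ell$ being slowly varying, so $L(n)\ell(n)$ is slowly varying and $L(n)\ell(n)/n \to 0$). The final \emph{convergence iff convergence} assertion is then an immediate corollary, since both the series and the integral are monotone in $n$ and the two-sided comparison above forces them to be bounded uniformly in $n$ together or unbounded together.
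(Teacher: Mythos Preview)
Your approach matches the paper's: reduce to the integral via the change of variable $t=\ell(u)$ (so $dt=u^{-1}L(u)\,du$), and justify the sum--integral comparison by the eventual monotonicity of $u\mapsto u^{-1}L(u)f(\ell(u))$. The paper's proof is in fact terser than yours and simply declares the comparison ``straightforward'' once this monotonicity is noted.

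Your caution about the convergent case is well placed and actually sharper than the paper. For an arbitrary non-increasing $f$ the boundary term need \emph{not} be $o\bigl(\int_{\ell(n)}^\infty f\bigr)$: with $L\equiv 1$ (so $\ell(t)=\log t$) and $f(t)=e^{-e^t}$ one gets $g(k)=k^{-1}e^{-k}$, for which $\sum_{k\ge n}g(k)\sim \tfrac{e}{e-1}\cdot\tfrac{e^{-n}}{n}$ while $\int_n^\infty g(t)\,dt\sim \tfrac{e^{-n}}{n}$, so the ratio tends to $e/(e-1)\neq 1$. The paper does not address this. As you note, the applications in Sections~\ref{sec:ladder}--\ref{sec:renewals} only use $f(t)=1/t$ or $f(t)=1/t^2$, for which the boundary term is harmless, so the lemma is fine where it is invoked.
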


\begin{proof}
For (i), the asymptotic equivalence comes from a simple comparison of the sum with the following integral (since $k^{-1} L(k) f(\ell(k))$ is asymptotically non-increasing this is straightforward), which is computed explicitly thanks to a change of variable $t=\ell(u)$,  $dt = L(u)u^{-1} du$ (recall $\ell(u)\to +\infty$ as $u\to+\infty$):
\[\int_n^{+\infty}  \frac{L(u)}{ u}  f\big( \ell(u)  \big)  du =  \int_{\ell(n)}^{+\infty} f(t) dt  \  ; \qquad
\int_1^{n}  \frac{L(u)}{ u}  f\big( \ell(u) \big)  du =  \int_1^{\ell(n) } f(t) dt  . \]
For (ii), the result is proven in a similar manner, using the change of variable $t= \ell^\star(u)$,  $dt = - L(u)u^{-1} du$ (recall $\ell^\star(u)\to 0$ as $u\to+\infty$).
\end{proof}

\section{Fuk-Nagaev's inequalities and  local large deviations}
\label{sec:FukNagaev}

From now on, we use $c,C, c',C',...$ as generic (universal) constants, and we will keep the dependence on parameters when necessary, writing for example $c_{\gep},C_{\gep}$ for constants depending on a parameter $\gep$.

\subsection{Fuk-Nagaev inequalities}

Let us first recall known Fuk-Nagaev inequalities---they are collected for example in \cite{cf:Nag79}, and are based on standard Cram\'er-type exponential moment calculations. Recall that $M_n = \max_{1\le i\le n} \{X_i\}$.
\begin{theorem}
\label{thm:fuknag}
Assume that \eqref{def:tail} holds with $\ga\in(0,2)$.
There exist a  constant $c$ such that for any $y\leq x$, 
\begin{itemize}
\item[(i)] If $\ga<1$,
\[\bP\left( S_{n} \geq x ; M_n \leq y \right) \leq 
e^{\frac xy} \Big( 1+\frac{ c x }{  n y^{1-\ga} L(y) }  \Big)^{- \frac xy}  \leq \Big( e c^{-1}\,  \frac{ y}{x} \, n L(y) y^{-\ga}\Big)^{ \frac xy} \, . \]
\item[(ii)] If $\ga>1$,
\[\bP\left( S_{n} - \mu  n\geq x ; M_n \leq y \right) \leq 
 \Big( c \frac{  y}{x} \, n L(y) y^{-\ga} \Big)^{- \frac{x}{y}  }  \, .
\]
\item[(iii)] If $\ga=1$,  we have for any $y\leq x$,
\begin{align*}
\bP\left( S_{n}\geq x ; M_n \leq y \right) &\leq e^{\frac{x}{y}}\Big( 1+ \frac{c x}{ n L(y)}\Big)^{- \frac{x - n \mu(y)}{y}} \, .
\end{align*}
\end{itemize}
\end{theorem}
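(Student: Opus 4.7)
The plan is to apply the classical Chernoff (exponential-moment) method to a truncated sum, estimating the moment generating function via the one-big-jump scale $y$ and Karamata's theorem. The three cases differ only in how the first moment of the truncated variable is controlled.

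I would first introduce the two-sided truncation $W_i := X_i \ind_{\{|X_i| \leq y\}}$. On $\{M_n \leq y\}$ one has $X_i \leq y$ for every $i$ and $\sum_i W_i - S_n = -\sum_i X_i \ind_{\{X_i < -y\}} \geq 0$, so
\begin{equation*}
\bP(S_n \geq x, M_n \leq y) \;\leq\; \bP\Bigl(\sum_{i=1}^n W_i \geq x\Bigr) \;\leq\; \exp\bigl(-\lambda x + n \log \bE[e^{\lambda W_1}]\bigr)
\end{equation*}
for every $\lambda > 0$, by Markov's inequality. Since $t \mapsto (e^t - 1 - t)/t^2$ is non-decreasing on $\bbR$, one has $e^t - 1 - t \leq (t/u)^2(e^u - 1 - u)$ for all $t \leq u$; applying this with $t = \lambda W_1 \leq \lambda y = u$, together with $\log(1+s) \leq s$, yields
\begin{equation*}
n\log \bE[e^{\lambda W_1}] \;\leq\; \lambda n \bE[W_1] \;+\; (e^{\lambda y} - 1 - \lambda y)\,\frac{n \bE[W_1^2]}{y^2}.
\end{equation*}
Karamata's theorem applied to the tail \eqref{def:tail} supplies $\bE[W_1^2] \leq C y^{2-\ga} L(y)$, and controls the first moment in each regime: $|\bE[W_1]| = O(y^{1-\ga}L(y))$ for $\ga<1$; $\bE[W_1] = \mu(y)$ exactly for $\ga=1$; $\bE[W_1] = \mu - O(y^{1-\ga}L(y))$ for $\ga>1$.

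Each case is then a one-parameter optimisation in $\lambda$. For $\ga > 1$ one writes $-\lambda(x+n\mu) + \lambda n \bE[W_1] = -\lambda x + O(\lambda n y^{1-\ga}L(y))$ and absorbs the error into the exponential remainder; applied to $\bP(S_n - \mu n \geq x, M_n \leq y)$, this yields a bound of the form $(cA/x)^{x/y}$ after optimisation, with $A = nL(y)y^{1-\ga}$. For $\ga = 1$ the identity $-\lambda x + \lambda n\bE[W_1] = -\lambda(x - n\mu(y))$ is used directly, producing the shift $-(x - n\mu(y))/y$ in the exponent. For $\ga < 1$ the bound $|\lambda n \bE[W_1]| \leq C'(\lambda y)\,nL(y)y^{-\ga}$ is absorbed into the main term via $\lambda y \leq e^{\lambda y} - 1$, which is the origin of the extra $e^{x/y}$ factor in (i). The choice $\lambda y = \log(1 + c x / A)$, with $A = n L(y) y^{1-\ga}$ (resp.\ $A = nL(y)$ in case (iii)), then yields the announced bound after elementary algebra; the second form in (i) follows from $(1+u)^{-1} \leq u^{-1}$ for $u > 0$.

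The delicate point is case (iii): the centering $n\mu(y)$ genuinely depends on $y$ and cannot be replaced by any fixed constant $n\mu$ (which is typically infinite in this regime), so the shift must remain visible in the exponent, and one has to verify that the exponential correction $e^{\lambda y}$ dominates both the Taylor remainder and the slowly varying mean contribution. In case (i), absorbing $\lambda n\bE[W_1]$ without degrading the exponent $-x/y$ is what ultimately fixes the value of the constant $c$ in terms of the implicit Karamata constants for $\bE[W_1]$ and $\bE[W_1^2]$.
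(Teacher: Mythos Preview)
Your proposal is correct and is precisely the Chernoff--truncation argument underlying the Fuk--Nagaev inequalities. The paper does not reprove Theorem~\ref{thm:fuknag} from scratch: cases (i) and (iii) are obtained by direct citation of \cite[Thm.~1.1--1.2]{cf:Nag79} (whose proofs are the exponential-moment computation you sketch), and case (ii) is deduced from the $\ga=1$ form by writing the exponent as $-\tfrac{x}{y}+\tfrac{n(\mu(y)-\mu)}{y}$, using $|\mu(y)-\mu|=O(y^{1-\ga}L(y))$, and then simplifying via the elementary inequality $(1+cu)^{1-c'/u}\ge c''u$. Your route handles the $\mu(y)-\mu$ error inside the Chernoff bound itself rather than as a post-processing step, which is slightly cleaner and gives the three cases in a unified way; the paper's route has the advantage of being a two-line reduction to a known reference.
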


The case $\ga<1$ is in \cite[Thm 1.1]{cf:Nag79} (take $t=1$ so that $0\le A(1,Y) \le c' n y^{1-\ga} L(y)$).
The case $\ga=1$ comes from \cite[Thm 1.2]{cf:Nag79} (take $t=2$ so that $0\le A(2,Y) \le c' n y^{2-\ga} L(y)$). For $\ga>1$, \cite[Thm 1.2]{cf:Nag79} does not directly give item (ii), but 
\[\bP\left( S_{n} - \mu  n\geq x ; M_n \leq y \right) \leq 
e^{\frac xy}  \Big( 1+\frac{c x }{ n y^{1-\ga} L(y) }  \Big)^{- \frac{x}{y} + \frac {n(\mu(y)-\mu)}{y} }\, . \]
Then we can use that  $|\mu(y)-\mu| =O( y^{1-\ga} L(y))$, to get that 
\[\bP\left( S_{n} - \mu  n\geq x ; M_n \leq y \right) \leq 
e^{\frac xy}  \Big( 1+\frac{c x }{ n y^{1-\ga} L(y) }  \Big)^{- \frac{x}{y} \big(1 -  c' n y^{1-\ga} L(y) /x\big)}\, . \]
Then, there is some $c''$ such that $ (1+c u)^{1-c'/u} \ge c'' u$ for all $u\ge 0$, so that we get Theorem~\ref{thm:fuknag}-(ii).

All these results remain valid if we only assume an upper bound $\bP(|X_1|>x) \le c\, L(x) x^{-\ga}$. 
Also, the case $\ga=2$ (random walks in the Normal domain of attraction) can be dealt with, see Corollary 1.7 in \cite{cf:Nag79}.
For the case $\ga = 1$, Theorem~\ref{thm:fuknag} gives some bound, but it is not optimal in general.
However, if $X_1$ has a symmetric distribution,  we  have that $\mu(y)\equiv 0$ and  $b_n \equiv 0$, so Theorem \ref{thm:fuknag} yields immediately the inequality
\begin{equation}
\label{eq:symmetric}
\bP\left( S_{n} -b_n \geq x ; M_n \leq y \right) \leq e^{\frac xy }\Big( 1+ \frac{cx}{ n L(y)}\Big)^{- \frac xy  } \, .
\end{equation}
The general case needs more work: we need the following result for non-negative $X_1$.
\begin{proposition}
\label{thm:alpha1}
Assume that $X_1\ge 0$ and that \eqref{def:tail} holds with $\ga=1$.
There exists a constant $c>0$, and for every $\gep>0$ there is some $C_{\gep}>0$, such that for any $x \ge C_{\gep} a_n$ and any $y\le x$
\begin{equation}
\label{alpha1pos}
\begin{split}
\bP&\left( S_{n} - b_n \geq x ; M_n \leq y \right) \leq 
e^{\frac xy}\Big( 1+ \frac{c x}{ n L(y) } \Big)^{-(1-\gep) \frac xy} \, ,\\
\bP&\left( S_{n} - b_n \leq - x  \right) \leq \exp\big( - \big( x/a_n \big)^{1/\gep} \big)  \, .
\end{split}
\end{equation}
\end{proposition}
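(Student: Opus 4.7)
The plan is to handle the upper and lower tails separately via exponential Markov (Cramér--Chernoff) applied to appropriate truncations of $S_n$.

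For the upper tail: on $\{M_n\le y\}$ one has $S_n = T_n := \sum_i X_i\mathbf{1}_{X_i\le y}$, so for any $\lambda>0$,
\[\bP(S_n - b_n \ge x,\,M_n\le y) \le e^{-\lambda(x+b_n)}\,\bE[e^{\lambda X_1\mathbf{1}_{X_1\le y}}]^n.\]
The tail bound $\bP(X_1>t)\le CL(t)/t$ together with integration by parts and Karamata give $\bE[(X_1\mathbf{1}_{X_1\le y})^k]\le Cy^{k-1}L(y)$ for every $k\ge 2$; summing the Taylor expansion and using $1+u\le e^u$ yields
\[\log \bE[e^{\lambda X_1\mathbf{1}_{X_1\le y}}] \le \lambda\mu(y) + \tfrac{CL(y)}{y}\bigl(e^{\lambda y}-1-\lambda y\bigr).\]
The Fuk--Nagaev choice $e^{\lambda y} = 1 + Z/(CnL(y))$ with $Z := x+b_n - n\mu(y)$, followed by the standard closed-form optimization, produces the raw bound $e^{Z/y}\bigl(1+Z/(CnL(y))\bigr)^{-(Z+CnL(y))/y}$.

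Matching the target: parametrize $u:=Z/y$, $v:=x/y$, $w:=CnL(y)/y$ and set $c:=1/C$, so the target is $e^v(1+v/w)^{-(1-\varepsilon)v}$. Introduce $\phi(t):=(t+w)\log(1+t/w)-t$; since $\phi'(t)=\log(1+t/w)$ is nonnegative and nondecreasing, the desired inequality reduces to $\phi(u)\ge \phi(v) - (\varepsilon v + w)\log(1+v/w)$. For $u\ge v$ this is immediate by monotonicity of $\phi$; for $u<v$, the mean value theorem gives $\phi(v)-\phi(u)\le (v-u)\log(1+v/w)$, so it suffices to show $v-u\le \varepsilon v + w$, i.e.,
\[n(\mu(y)-\mu(a_n)) \le \varepsilon x + CnL(y).\]
This is trivial when $y\le a_n$ (the left side is $\le 0$). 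When $a_n<y\le x$, Potter's bound on the slowly varying $L$ yields $\int_{a_n}^y L(t)/t\,dt \le C'L(a_n)\log(y/a_n)$, hence $n(\mu(y)-\mu(a_n))\le C''a_n\log(x/a_n)$ (using $nL(a_n)\sim a_n$); choosing $C_\varepsilon$ so that $C''\log(C_\varepsilon)/C_\varepsilon \le \varepsilon$ and invoking monotonicity of $r\mapsto \log(r)/r$ on $[e,\infty)$ then closes the argument for all $x\ge C_\varepsilon a_n$.

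For the lower tail, positivity gives $\bP(S_n - b_n \le -x)=0$ whenever $x>b_n$, so assume $x\le b_n$. Truncate $\tilde X_i := X_i\wedge y$ for some $y>0$ to be chosen; since $S_n\ge \sum_i \tilde X_i$, the elementary inequality $e^{-u}\le 1-u+u^2/2$ (valid for all $u\ge 0$) combined with $\bE[\tilde X_1]\sim \ell(y)$ and $\bE[\tilde X_1^2]\sim 2yL(y)$ (integration by parts plus Karamata) give, for every $\lambda>0$,
\[\bP(S_n-b_n\le -x) \le \exp\bigl(\lambda(b_n-x) - n\lambda\ell(y)(1+o(1)) + n\lambda^2 yL(y)\bigr).\]
Choosing $y=a_n/K$, the Karamata estimate $\ell(a_n)-\ell(a_n/K)\sim L(a_n)\log K$ yields $b_n - n\ell(y) = O(a_n\log K)$, so for $x\ge Ca_n\log K$ the linear term becomes $\le -\lambda x/2$; optimizing $\lambda$ then gives a bound of order $\exp(-cK(x/a_n)^2)$ (using $nyL(y)\sim a_n^2/K$). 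Taking $K:=(x/a_n)^{1/\varepsilon-2}$ when $1/\varepsilon>2$ (and $K=1$ otherwise), and enlarging $C_\varepsilon$ so that $x\ge C_\varepsilon a_n$ enforces $x\ge Ca_n\log K$, produces the desired bound $\exp(-(x/a_n)^{1/\varepsilon})$. The main technical obstacle is the reduction $v-u\le \varepsilon v + w$ in the upper-tail step, which depends on precise de Haan/Karamata control of $\mu(y)-\mu(a_n)$ and the right calibration of $C_\varepsilon$ through the function $r\mapsto \log(r)/r$.
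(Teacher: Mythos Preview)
Your overall strategy matches the paper's: Chernoff bounds plus control of $\mu(y)-\mu(a_n)$ for the upper tail, and an exponential Markov argument for the lower tail. The reduction via your function $\phi$ to the inequality $n(\mu(y)-\mu(a_n))\le \varepsilon x + CnL(y)$ is correct and in fact slightly sharper than the paper's, which drops the $CnL(y)$ term.

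However, there is a genuine gap where you invoke Potter's bound. You claim that $\int_{a_n}^y L(t)/t\,dt \le C'L(a_n)\log(y/a_n)$, but Potter's bound only gives $L(t)\le C_\delta L(a_n)(t/a_n)^\delta$ for $t\ge a_n$ and any fixed $\delta>0$; it does \emph{not} give $L(t)\le C'L(a_n)$ uniformly. Concretely, if $L(t)=\log t$ and $y=a_n^M$, then $\int_{a_n}^y L(t)/t\,dt \approx \tfrac{M^2}{2}(\log a_n)^2$ while $L(a_n)\log(y/a_n)=(M-1)(\log a_n)^2$, so the ratio grows like $M/2$ and no uniform $C'$ exists. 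Your intermediate claim $n(\mu(y)-\mu(a_n))\le C''a_n\log(x/a_n)$ therefore fails. The fix is what the paper does in Claim~\ref{claim:mu}: use Potter with a positive exponent, say $\delta=1/2$, to get $\int_{a_n}^y L(t)/t\,dt \le C_{1/2} L(a_n)(y/a_n)^{1/2}$, hence $n(\mu(y)-\mu(a_n))\le C a_n(x/a_n)^{1/2}=C(a_n/x)^{1/2}\,x\le \varepsilon x$ once $x\ge (C/\varepsilon)^2 a_n$. Your $\phi$-argument then goes through unchanged. For the lower tail, your truncation at level $a_n/K$ is a legitimate alternative to the paper's direct Laplace-transform estimate \eqref{Laplace:smallt}, but you should write $\mu(y)$ rather than $\ell(y)$ (the proposition does not assume $|\mu|=+\infty$), and check that the asymptotics you use hold uniformly for your choice $K=(x/a_n)^{1/\varepsilon-2}$; since $x\le b_n$ forces $K$ to be at most slowly varying in $a_n$, one has $a_n/K\to\infty$ and this is fine.
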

From Proposition~\ref{thm:alpha1}, we obtain Theorem~\ref{thm:FukNagalpha1}-\eqref{eq:fuknag1}, by separating the $X_i$'s into a positive and a negative part:
$X_i^+ := X_i \ind_{\{X_i>0 \}}$ and $X_i^- := - X_i\ind_{\{X_i<0 \}}$,
so that $X_i^+,X_i^-$ are non-negative.
Naturally, we also define $S_n^+ := \sum_{i=1}^n X_i^+$ and $S_n^{-} := \sum_{i=1}^n X_i^{-}$ so that $S_n = S_n^+ - S_n^-$; and also $b_n^+ := n \bE[X_1^+ \ind_{\{X_1^+ \le a_n\}}]$ and $b_n^{-}:=n \bE[X_1^- \ind_{\{X_1^- \le a_n\}}]$, so that $b_n^+$ (resp.\ $b_n^-$) is a centering sequence for $S_n^+$ (resp.\ $S_n^-$), and $b_n = b_n^+ - b_n^{-}$.
Then, 
\begin{align}
\notag
\bP\Big( S_n - b_n \ge x \, ; \, M_n \le y\Big) \le \bP \Big( S_n^+ - b_n^+ \ge (1-\gep) x &\, ;\, \max_{1\le i\le n} X_i^+ \le y \Big)\\
 &+ \bP\Big( S_n^- - b_n^- \le -  \gep x \Big) \, .
 \label{eq:neg-pos-parts}
\end{align}
Then, we may use \eqref{alpha1pos} for both terms, and we obtain Theorem~\ref{thm:FukNagalpha1}-\eqref{eq:fuknag1}, by possibly adjusting the constants (for instance to treat the case $x\ge a_n $ instead of $x\ge C_{\gep} a_n$).

Note that in \eqref{eq:fuknag1}, the large deviation may come from two different possibilities: either the positive part makes a few jumps of length $y$ (the number of such jumps is approximately $(1-\gep)x/y$), giving the first term; either the negative part makes a large deviation to lower its value, giving rise to the second term (which is not affected by the truncation $M_n\le y$).

\subsection{An easy consequence: Theorem~\ref{thm:largedev}}

We now prove Theorem~\ref{thm:largedev}, as a consequence of the above Fuk-Nagaev inequalities.
We write it only for large deviations to the right (\textit{i.e.}\ $x/a_n \to+\infty$), the other case being symmetric.
For any fixed $\gep>0$, we write
\begin{equation}
\label{eq:largedev1}
\bP(S_n -b_n >x)  = \bP(S_n - b_n > x ; M_n > (1-\gep) x) + \bP(S_n - b_n >x ; M_n \le (1-\gep) x) \, . 
\end{equation}

\smallskip

{\it First term.}
It gives the main contribution.
A lower bound is, by exchangeability and independence of the $X_i$'s
\begin{align*}
\bP \big( \exists  &\ i \, , X_i > (1+\gep)x, S_{n} - b_n >x \big) \\
&\ge n \bP \big(X_1> (1+\gep) x \big) \bP \big(S_{n-1} - b_n > -\gep x \big) - \bP\big( \exists\ i\neq j , X_i, X_j > (1+\gep) x \big) \\
& \ge (1-\gep) n \bP \big(X_1> (1+\gep) x \big) -  \binom{n}{2} \bP \big(X_1> (1+\gep) x \big)^2 \, .
\end{align*}
The last inequality holds for $n$ large enough: we used that $(S_{n-1} -b_n)/a_n$ converges in distribution and $x/a_n \to+\infty$. In the last line, the second term is negligible compared to the first one, since $n\bP \big(X_1> (1+\gep) x \big) \to 0 $ as $x/a_n \to +\infty$.

For an upper bound, we use simply a union bound to get
\[\bP \big( M_n > (1-\gep) x \big) \le n \bP \big( X_1 > (1-\gep) x \big) \, .\] 
Thanks to \eqref{def:tail}, the first term in \eqref{eq:largedev1} is therefore asymptotically bounded from below  by $(1-\gep) (1+\gep)^{-\ga} (p-\gep) n L(x) x^{-\ga}$ and from above by $(1-\gep)^{-\ga} (p+\gep) n L(x) x^{-\ga}$.

\smallskip
{\it Second term.}
It remains to prove that, for any arbitrary $\gep>0$, the second term in~\eqref{eq:largedev1} is $o(nL(x) x^{-\ga})$ as $x/a_n \to+\infty$.
We decompose again this probability into two parts. The first part is
\[  \bP\Big(S_n -b_n >x ; M_n \in  (x/8,(1-\gep)x] \Big) \le n \bP(X_1 \ge x/8)  \bP\big(S_{n-1}  -b_n > \gep x \big) = o(n L(x) x^{-\ga}) \, .  \]
The first inequality comes from the exchangeability and independence of the $X_i$'s, and the second one comes from the convergence in distribution of $(S_{n-1} - b_n)/a_n$ together with $x/a_n \to+\infty$.
The last part ($M_n\le x/8$) is controlled thanks to the above Fuk-Nagaev Theorems~\ref{thm:FukNagalpha1}-\ref{thm:fuknag}: we have that in any case, taking $\gep =1/2$ in \eqref{eq:fuknag1}, as $x/a_n\to +\infty$
\[ \bP\big(S_n -b_n >x ; M_n  \le x/8 \big) \le \Big( c n L(x) x^{-\ga}\Big)^{4} + e^{ - c (x/a_n)^2} = o\Big( \frac{L(x)}{L(a_n)} (x/a_n)^{-\alpha} \Big) \, , \]
where we used that $n \sim a_n^{-\ga} L(a_n)$ (so that the last term is indeed $o(n L(x) x^{-\alpha})$).

\smallskip
In conclusion, since $\gep>0$ is arbitrary, we get Theorem \ref{thm:largedev}.
\qed

\subsection{Proof of Proposition \ref{thm:alpha1}}

Recall that $X_1\geq 0$, and that $\mu(y)$ is non-decreasing.

\smallskip
\noindent
{\it Proof of the first part of \eqref{alpha1pos}.}
We start from Theorem \ref{thm:fuknag}: for any $y\le x'$ we have
\begin{equation*}
\bP\left( S_{n}\geq x' ; M_n\leq y \right) \leq \Big( 1+ c\frac{x'}{ n L(y)}\Big)^{- \frac{x' - n \mu(y)}{y} } \, .
\end{equation*}
Plugging $x'=b_n+x = n\mu(a_n) +x $ in this inequality, we get that for any $y\le x$
\begin{align}
\label{eq:Nagaev1}
\bP\left( S_{n}- b_n \geq x ; M_n \leq y \right) &\leq \Big( 1+ c \frac{n\mu(a_n) +x}{ n L(y)}\Big)^{- \frac{x}{y} + \frac{n(\mu(y) -\mu(a_n))}{y}} \, .
\end{align}
Then it is just a matter of comparing $n(\mu(y) - \mu(a_n))$ to $x$ (with $x\ge y$).

First, when $y\le a_n$, then $\mu(y) - \mu(a_n) \le 0$ since $X_1$ is non negative, so that
\[ \bP\left( S_{n}- b_n \geq x ; M_n \leq y \right) \le \Big( 1+  \frac{ c x}{ n L(y)}\Big)^{- \frac{x}{y} } \, . \]

When $y\ge a_n$, then we use the following claim (we prove it for $\ell(\cdot)$ defined in \eqref{def:ell}, but it obviously holds also for $\ell^\star(c\dot)$, and for  $\mu(\cdot)$ in the case of a non-negative $X_1$).
\begin{claim}
\label{claim:mu}
For every $\gd>0$, there is a constant $c_{\gd}$ s.t. for every $1\le u\le v$ we have
\begin{equation}
\label{claim1}
 \frac{1}{L(v)} \big( \ell(v) - \ell(u) \big)  \le c_{\gd} (u/v)^{\gd}   \, .
\end{equation}
Moreover, considering two sequences $(u_n), (v_n) \to+\infty$, if there is a constant $c>0$ such that $u_n \le v_n\le c u_n$ for all $n$ then we have
\begin{equation}
\label{claim2}
 \frac{1}{L(u_n)} \big( \ell(v_n) - \ell(u_n) \big) \simn \log\big( v_n /u_n\big)
\end{equation}
\end{claim}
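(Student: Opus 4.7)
The plan is to treat both parts as direct consequences of standard facts about slowly varying functions applied to the integral representation $\ell(v)-\ell(u) = \int_u^v L(t)/t\,dt$ coming from the very definition \eqref{def:ell}. The same arguments will transport verbatim to $\ell^\star(\cdot)$ (whose integral runs from $v$ to $+\infty$) and to $\mu(\cdot)$ in the case $X_1\ge 0$ (where $\mu(x)$ coincides with $\ell(x)$ up to an $o(L(x))$ correction).

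For \eqref{claim1}, the main tool I would invoke is Potter's bound (see \cite[Thm.~1.5.6]{cf:BGT}): for every $\gd>0$ there exists a constant $A_\gd$ such that $L(t)/L(v) \le A_\gd (v/t)^\gd$ for all $1\le t\le v$. Plugging this into the integral representation gives
\[
\ell(v)-\ell(u) \;\le\; A_\gd\, L(v)\, v^\gd \int_u^v t^{-\gd-1}\,dt \;\le\; \frac{A_\gd}{\gd}\, L(v)\, (v/u)^\gd,
\]
and dividing by $L(v)$ yields \eqref{claim1} with $c_\gd = A_\gd/\gd$ (modulo the cosmetic $u/v$ versus $v/u$ convention in the statement, which amounts to relabelling; the argument applies either way since $\gd>0$ is free).

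For \eqref{claim2}, I would appeal to the uniform convergence theorem for slowly varying functions (\cite[Thm.~1.2.1]{cf:BGT}): because $v_n/u_n$ stays in the compact set $[1,c]$, we have $\sup_{1\le s\le c}\big|L(s u_n)/L(u_n)-1\big| \to 0$ as $n\to +\infty$. Consequently,
\[
\ell(v_n)-\ell(u_n) \;=\; \int_{u_n}^{v_n}\frac{L(t)}{t}\,dt \;=\; (1+o(1))\,L(u_n)\int_{u_n}^{v_n}\frac{dt}{t} \;=\; (1+o(1))\,L(u_n)\log(v_n/u_n),
\]
which is exactly \eqref{claim2} after dividing by $L(u_n)$.

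I do not anticipate a serious obstacle: both steps are textbook. The only mild point of care is to choose the Potter exponent in part (i) small enough that the resulting constant $c_\gd$ behaves as required (which is free since $\gd$ is arbitrary), and to observe that in part (ii) the boundedness of $\log(v_n/u_n)\in[0,\log c]$ is what allows the uniform replacement $L(t) = (1+o(1))L(u_n)$ inside the integral.
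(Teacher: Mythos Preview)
Your argument is correct and essentially identical to the paper's: both parts rest on the integral representation $\ell(v)-\ell(u)=\int_u^v L(t)t^{-1}\,dt$, with Potter's bound for \eqref{claim1} and the uniform convergence theorem for \eqref{claim2}. The only cosmetic difference is that for \eqref{claim1} the paper first bounds $L(t)/L(v)$ by its worst case $c_\gd (v/u)^{\gd/2}$ over $t\in[u,v]$, integrates $1/t$ to get $\log(v/u)$, and then uses $\log(v/u)\le c_\gd (v/u)^{\gd/2}$, whereas you keep the pointwise Potter bound $L(t)/L(v)\le A_\gd (v/t)^\gd$ and integrate $t^{-1-\gd}$ directly; both routes are one-line applications of the same tool. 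One small remark: the $(u/v)^\gd$ versus $(v/u)^\gd$ discrepancy you flagged is not a ``convention'' but a genuine typo in the statement (the bound with $(u/v)^\gd$ is plainly false since the left-hand side is unbounded as $v/u\to\infty$); the paper's own applications and proof both use the $(v/u)^\gd$ version you derived.
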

\noindent
\textit{Proof of the Claim.}
We write
\begin{align*}
 \frac{\ell(u) - \ell(v)}{L(v)}  = \int_{u}^{v} \frac{L(t) t^{-1}}{L(v)} {\rm dt} \le c_{\gd} \Big( \frac{u}{v} \Big)^{\gd/2} \int_{u}^{v} \frac{ {\rm d} t }{t}
\end{align*}
where we used Potter's bound (see \cite{cf:BGT}) to get that there is a constant $c_{\gd}$ such that uniformly for $t \ge v$ we have $L(t)/L(v) \le c_{\gd} (t/v)^{\gd/2}$. Then, the last integral is equal to $\log(u/v) \le c_{\gd} (u/v)^{\gd/2} $ so the first part of the claim is proven. 
For the second part, this is standard and comes from the same computation, together with the fact that  $L(t)/L(v_n) \to 1$ uniformly for $t\in [u_n, v_n] \subset [u_n,cu_n]$:
\[\frac{\ell(v_n) - \ell(u_n)}{L(u_n)}  = \int_{u_n}^{v_n} \frac{L(t) t^{-1}}{ L(v_n)} {\rm d} t = (1+o(1))\int_{u_n}^{v_n} \frac{{\rm d} t }{t}  \simn \log (v_n/u_n) \, .\]
\qed

From Claim~\ref{claim:mu} (take $\gd=1/2$) we get that, for any $y\ge a_n$, and since $x\ge y$
\[ n(\mu(y) - \mu(a_n)) \le c_{\gd} n L(a_n) (y/a_n)^{1/2} \le c \frac{n L(a_n)}{a_n} (x/a_n)^{-1/2} \times x .\]
Therefore, by choosing $C_{\gep}>0$ large enough, we get that $ n(\mu(y) - \mu(a_n)) \le \gep x$ provided that $x\ge C_{\gep} a_n$, $a_n\le y \le x$.
Plugged in \eqref{eq:Nagaev1}, we get
\[ \bP\left( S_{n}- b_n \geq x ; M_n \leq y \right) \leq e^{\frac xy}\Big( 1+ c \frac{n\mu(a_n) +x}{ n L(y)}\Big)^{- (1-\gep) \frac{x}{y} } \le  e^{\frac xy}\Big( 1+  \frac{c x}{ n L(y)}\Big)^{- (1-\gep) \frac{x}{y} } \, .\]
Hence, the first part of \eqref{alpha1pos} is proven.

\smallskip
\noindent
{\it Proof of the second part of \eqref{alpha1pos}.}
We write, for any $t>0$ and any $a_n \le x \le b_n$ (recall that $S_n-b_n \ge -b_n$)
\[ \bP \left(S_n -b_n \le -x \right) \le e^{-t (x-b_n)} \bE[e^{-t X_1}]^n  \, .\]
We also use that, because $X_1 \ge 0$ and thanks to \eqref{def:tail}, we have that there is a constant $c>0$ such that for any $t\le 1$
\begin{equation}
\label{Laplace:smallt}
\bE[e^{-t X_1}] - 1 + t \mu(1/t)  \le  c  t L(1/t) \, .
\end{equation}
Indeed, one simply writes that the absolute value of the left hand side is 
\begin{align*}
\bigg|  \sum_{n=1}^{1/t} \Big( 1-e^{-tn} -tn \Big)& \bP(X_1 =n)+ \sum_{n>1/t} (1-e^{-tn}) \bP(X_1 =n) \bigg| \\
&\le \sum_{n=1}^{1/t} t^2 n^2 \bP(X_1 =n) + \sum_{n>1/t}  \bP(X_1 =n) \\
& = t^2 \bE\big[ (X_1)^2 \ind_{\{X_1\le 1/t\}}\big] + \bP(X_1  >1/t) \, .
\end{align*}
Then, one easily get that  both terms are $O(t L(1/t))$, using \eqref{def:tail}.

Thanks to \eqref{Laplace:smallt}, and using that $1+x\le e^{x}$, we get that for any $t\le 1$
\begin{align}
\bP \left( S_n -b_n \le -x \right) & \le \exp \Big( -t (x-b_n) - n t \mu(1/t) + c n t L(1/t) \Big) \notag\\
& = \exp\Big( - t x  + nt \Big[ \mu(a_n)- \mu(1/t) + c  L(1/t) \Big] \Big)\, .
\label{eq:almostthere}
\end{align}
Then, we fix $\gep>0$ and choose $t:= (a_n)^{-1} \times (x/a_n)^{(1-\gep)/\gep}$.  Since $a_n \le x\le b_n$, we have that $ 1\le (x/a_n)^{(1-\gep)/\gep} \le (b_n/a_n)^{(1-\gep)/\gep}$, so that we indeed have $t\le 1$, and also $1/t\le a_n$.  Thanks to Claim~\ref{claim:mu}, we get that there is a constant $c_{\gep}$ such that $\mu(a_n) - \mu(1/t) \le c_{\gep} (t a_n)^{\gep} L(a_n)$, and Potter's bound  also gives that $L(1/t)  \le c_{\gep} (t a_n)^{\gep} L(a_n)$.
We therefore get that the r.h.s.\ of \eqref{eq:almostthere} is bounded by
\begin{equation}
 \exp\Big( - t x  + (1+c_{\gep}) n t  (t a_n)^{\gep} L(a_n) \Big)
 \le  \exp\Big( - \Big(\frac{x}{a_n} \Big)^{1/\gep}  + c'_{\gep}\Big(\frac{x}{a_n} \Big)^{(1-\gep^2)/\gep}  \Big)
\end{equation}
where we used the definition of $t$, together with the fact that $n L(a_n) \sim a_n$ for the second term in the exponential. Hence, there exists some $C_\gep>0$ such that provided that $x/a_n\ge C_{\gep}$ we have
\[\bP \left( S_n -b_n \le -x \right) \le \exp\Big( -\frac12 \big(x /a_n \big)^{1/\gep}   \Big) \, ,\]
which ends the proof of the second part of \eqref{alpha1pos}, the factor $1/2$ being irrelevant.

\section{Local large deviations}
\label{sec:locallarge}

\subsection{Local versions of Fuk-Nagaev inequalities}
The following Proposition proves Theorem~\ref{thm:FukNagalpha1}-\eqref{eq:localLD}, and together with a local version of Theorem~\ref{thm:fuknag}.
\begin{proposition}
\label{thm:localfuknag}
Assume that \eqref{def:tail} holds.
There is a  constant $C>0$ such that for any $\gep>0$, there is some $C_{\gep}$ such that: for $x\ge C_{\gep} a_n$ in the case $\ga=1$ and for $x\ge C_{\gep}$ if $\ga\neq 1$, for any $y\le x$
\[ \bP\left( S_{n} -  \lfloor b_n \rfloor = x ; M_n \leq y \right) \leq \frac{C}{a_n}
\bP\Big( S_{\lfloor  n/2 \rfloor} -  b_{\lfloor n/2 \rfloor}  \ge (1-\gep) \frac{x}{2}  ; M_{\lfloor n/2 \rfloor } \le y \Big)  \,  . \]
An upper bound is then given by Theorem~\ref{thm:FukNagalpha1}-\eqref{eq:fuknag1} if $\ga=1$, or  by Theorem~\ref{thm:fuknag} if $\ga\neq 1$.
\end{proposition}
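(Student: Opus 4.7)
The plan is to split $S_n$ into two independent halves, apply Gnedenko's local limit theorem \eqref{eq:LLT} to one half, and extract a tail probability from the other. Set $m:=\lfloor n/2\rfloor$, $m':=n-m$, and write $S_n = S_m + \tilde S_{m'}$, where $\tilde S_{m'}$ is the independent sum built from the last $m'$ increments, with respective partial maxima $M^{(1)}_m$ and $M^{(2)}_{m'}$. Conditioning on $S_m$ yields
\[
\bP\big(S_n - \lfloor b_n\rfloor = x;\, M_n\le y\big) = \sum_{k\in\bbZ} \bP\big(S_m=k,\, M^{(1)}_m\le y\big)\, \bP\big(\tilde S_{m'} = x+\lfloor b_n\rfloor -k,\, M^{(2)}_{m'}\le y\big).
\]

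The first key step is the centering comparison $|b_n - b_m - b_{m'}| = O(a_n)$, which is trivial for $\ga\in(0,1)$ (where $b_n\equiv 0$) and for $\ga\in(1,2)$ (where $b_n=n\mu$ is additive). For $\ga=1$ one writes
\[
b_n - b_m - b_{m'} = n\big(\mu(a_n)-\mu(a_m)\big) + m'\big(\mu(a_m)-\mu(a_{m'})\big),
\]
and Claim~\ref{claim:mu}, combined with $a_n\asymp a_m\asymp a_{m'}$, bounds each piece by $O(n L(a_n)) = O(a_n)$. Given $\gep>0$, choosing $C_\gep$ large enough ensures that when $x\ge C_\gep a_n$, both $|b_n - b_m - b_{m'}|$ and $\{b_n\}:=b_n - \lfloor b_n \rfloor$ are at most $\gep x/4$. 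The dichotomy then follows from the identity $(S_m-b_m) + (\tilde S_{m'}-b_{m'}) = x + (b_n - b_m - b_{m'}) - \{b_n\}$: whenever $S_n - \lfloor b_n\rfloor = x$, at least one of the two recentered halves must be $\ge (1-\gep)x/2$.

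Splitting the probability according to which half is large and applying Gnedenko's LLT to the other half---which yields the uniform bound $\bP(\tilde S_{m'}=j)\le C/a_{m'}\le C'/a_n$, since $a_{m'}/a_n$ stays bounded away from $0$---gives
\[
\bP\big(S_n - \lfloor b_n\rfloor = x;\, M_n\le y;\, S_m-b_m \ge (1-\gep)x/2\big) \le \frac{C'}{a_n}\,\bP\big(S_m-b_m\ge (1-\gep)x/2,\, M_m\le y\big),
\]
together with the symmetric bound obtained by swapping the roles of $m$ and $m'$. For odd $n$ one must further reduce the $S_{m'}=S_{m+1}$--tail to an $S_m$--tail: writing $S_{m+1}=S_m+X_{m+1}$, conditioning on $X_{m+1}$, and using $b_{m+1}-b_m=O(a_n)$ achieves this at the price of a further adjustment of $\gep$. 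The main technical obstacle is precisely the non-additivity of the centering in the case $\ga=1$: the $O(a_n)$ defect in $b_n-b_m-b_{m'}$ is exactly what imposes the restriction $x\ge C_\gep a_n$ and forces the multiplicative $(1-\gep)$ factor in the final bound.
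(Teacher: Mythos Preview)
Your proof is correct and follows essentially the same approach as the paper: split $S_n$ into two halves, use the dichotomy that one recentered half must exceed $(1-\gep)x/2$, and apply the local limit theorem to the other half. The only cosmetic difference is that the paper centers both halves by $\tfrac12\lfloor b_n\rfloor$ rather than by $b_m$ and $b_{m'}$, so the dichotomy is exact (one of the two pieces is $\ge x/2$ on the nose) and the only centering comparison needed is $\tfrac12\lfloor b_n\rfloor - b_{\lfloor n/2\rfloor}$; this sidesteps your separate treatment of $b_n-b_m-b_{m'}$ and makes the odd-$n$ reduction largely unnecessary, though the paper, like you, does not spell out the $\lceil n/2\rceil$ versus $\lfloor n/2\rfloor$ discrepancy for the second half.
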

This result is similar to \cite[Thm.~1.1]{cf:CD}, but here the estimate holds even when $y\ll x$ which is not the case in \cite{cf:CD}.
In view of Theorems~\ref{thm:FukNagalpha1}, one obtains the upper bound $(cn y^{1-\ga} L(y)/x )^{(1-\gep)^2 x/2y}$ if $\ga = 1$ for $x\ge C_{\gep} a_n$ (the case $\ga\neq 1$ is analogous): we might be able to improve the exponent to $\lceil x/y \rceil$ as in \cite[Thm.~1.1]{cf:CD} (at least when $y$ is a constant times~$x$), but we do not pursue this level of optimality here.

\begin{proof}
Let us denote $\hat S_n := S_n - \lfloor b_n \rfloor$ the ``recentered'' walk.
We decompose $\bP(\hat S_n = x)$ according to whether $S_{\lfloor  n/2 \rfloor} - \tfrac12 \lfloor b_n\rfloor \ge x/2$ or not, so that we obtain
\begin{align*}
\bP \big( \hat S_n = x  ; M_n \le y \big) \le &\bP\Big( \hat S_n = x  ; S_{\lfloor  n/2 \rfloor} - \tfrac12 \lfloor b_n \rfloor \ge x/2   \, ;\, M_{\lfloor  n/2 \rfloor} \le y \Big) \\
& + \bP\Big( \hat S_n = x  ;  S_n - S_{\lfloor  n/2 \rfloor} - \tfrac12 \lfloor b_n \rfloor \ge x/2 \, ; \, \max_{\lfloor n/2 \rfloor < i\le n} X_i \le y \Big)\, .
\end{align*}
The two terms are treated similarly, so we only focus on the first one.
We have
\begin{align}
\bP\Big(  \hat S_n = x   \,  & ;\, S_{\lfloor  n/2 \rfloor} - \tfrac12 \lfloor b_n \rfloor \ge x/2  ; M_{\lfloor n/2 \rfloor} \le y  \Big) \notag\\
&= \sum_{z \ge \tfrac12 \lfloor b_n \rfloor + x/2} \bP\big( S_{\lfloor n/2\rfloor} = z  ; M_{\lfloor n/2 \rfloor } \le y\big) \bP \big( S_n - S_{\lfloor n/2\rfloor}  =  \lfloor b_n \rfloor+ x-z \big)  \notag\\
& \le \frac{C}{a_{n}}  \sum_{z \ge \tfrac12 \lfloor b_n \rfloor + x/2} \bP\big( S_{\lfloor n/2\rfloor} =z ; M_{\lfloor n/2 \rfloor } \le y\big)
 \notag  \\
 &=  \frac{C}{a_{n}}  \bP\Big( S_{\lfloor  n/2 \rfloor} - \tfrac12 \lfloor b_n \rfloor \ge x/2  ; M_{\lfloor  n/2 \rfloor} \le y  \Big) \, ,
\label{firstpiece}
\end{align}
where we used Gnedenko's local limit theorem  \eqref{eq:LLT} to get that there is a constant $C>0$ such that for any $k\ge 1$ and $y \in \bbZ$, we have $\bP(S_k = y) \le C /a_k $.

It remains to control $\tfrac12 \lfloor b_n \rfloor  - b_{\lfloor n/2 \rfloor}$. When $\ga \in(0,1)$ we have that $b_n\equiv 0$ so this quantity is equal to $0$, and when $\ga>1$ we have $b_k = k \mu$ in which case we get $\tfrac12 \lfloor b_n \rfloor  -  \lfloor b_{n/2}\rfloor  \ge - |\mu| $.
When $\ga=1$, this is more delicate but not too hard:
\begin{align*}
\tfrac12 n \mu(a_n) - \lfloor n/2 \rfloor \mu(a_{\lfloor n/2 \rfloor}) &\ge \frac n2 \Big[ \mu(a_n) - \mu(a_{\lfloor n/2\rfloor}) \Big] - |\mu(a_{\lfloor n/2\rfloor }) | \\
&\ge  - c_0\,  n L(a_n)  -  |\mu(a_{\lfloor n/2\rfloor })| \ge - 2 c_0 a_n\, .
\end{align*}
For the second inequality we used  Claim \ref{claim:mu} (separating the positive and negative parts of $X_1$, using also that $a_n/a_{\lfloor n/2\rfloor}$ is bounded  above by a constant), and for the last inequality we used the definition of $a_n$ (and the fact that $|\mu(a_{\lfloor n/2\rfloor })| =o( a_n)$).
Therefore, provided that $x \ge C_{\gep} a_n$  with some constant $C_{\gep}$ large enough in the case $\ga=1$ (if $\ga\neq 1$, having $x\ge C_{\gep}$ is enough), we get that $\tfrac12 \lfloor b_n \rfloor  - b_{\lfloor n/2 \rfloor} \ge - \gep x/2 $. This concludes the proof.
%
\end{proof} 
 
%
%

\subsection{Improved local large deviations: proof of Theorem \ref{thm:localDoney}}

\label{sec:localdev}

We only consider large deviations to the right, i.e.\ $x\ge a_n$, since the other case is symmetric.
We give the proof of \eqref{eq:localLdoney} and \eqref{eq:localLdoney2} together, the latter using the same estimates.
We fix $\gep>0$ (we take $\gep=1/8$ when we prove \eqref{eq:localLdoney}, and we will choose $\gep$ arbitrarily small when we prove \eqref{eq:localLdoney2}),  and we write (recall $\hat S_n = S_n -\lfloor b_n \rfloor$)
\begin{align}
\bP \big( \hat S_n   =x \big) = \bP&\big( \hat S_n  =x, M_n \ge (1-\gep)x \big) \notag\\
&+ \bP\big( \hat S_n  =x, M_n \in (\gep x, (1-\gep) x )\big) +\bP\big( \hat S_n  =x, M_n \le \gep x \big)\, .
\label{threeterms}
\end{align}

The first term in \eqref{threeterms} is also the main one: by exchangeability of the $X_i$'s, we get that 
\begin{align}
\bP\big( \hat S_n  =x, M_n \ge & (1-\gep)x \big) = \sum_{y\ge (1-\gep) x} \bP( \hat S_n  = x , M_n =y) \notag \\
&\le  \sum_{y\ge (1-\gep)x} n \bP(X_1=y) \bP(S_{n-1} - \lfloor b_n\rfloor = x-y) \, .
\end{align}
Bounding $\bP(X_1 =y)$ by $\sup_{y\ge (1-\gep) x} \bP(X_1=y)$ and summing over $y$ the last probability, we therefore get
\[\bP\big( \hat S_n  =x, M_n \ge  (1-\gep)x \big)
 \le n \sup_{y\ge (1-\gep) x} \bP(X_1=y) \, .\]
Then, if we assume \eqref{hyp:localtail1}, we obtain
\begin{equation}
\bP \big( \hat S_n =x, M_n \ge (1-\gep)x \big) \le C n L(x) x^{-(1+\ga)} \, .
 \label{term1}
\end{equation}
If we assume \eqref{hyp:doney}, for any $\gep>0$ we have $\sup_{y\ge (1-\gep) x} \bP(X_1=y) \le (p+3\gep) L(x)x^{-(1+\ga)}$, provided that $x$ is large enough: we may replace the constant $C$ in \eqref{term1} by $(p+3\gep)$.

For the second term in \eqref{threeterms}, we  have:
\begin{align}
\bP\big( \hat S_n =x,  M_n \in & (\gep x, (1-\gep) x ) \big) = \sum_{y = \lfloor \gep x \rfloor +1}^{ \lceil (1-\gep) x \rceil -1} \bP( \hat S_n = x , M_n =y) \notag\\
& \le  \sum_{y = \lfloor \gep x \rfloor +1}^{ \lceil (1-\gep) x \rceil -1}  n \bP( X_1=y) \bP( S_{n-1}-\lfloor b_n \rfloor = x -y) \notag\\
& \le C_{\gep}\,  n L(x) x^{-(1+\ga)}\,  \bP(S_{n-1} - \lfloor b_n \rfloor \ge \gep x)\, ,
\label{term2}
\end{align}
where we used \eqref{hyp:localtail1} in the last inequality.
Moreover,  since $(S_n-b_n)/a_n$ converges in distribution, we get that $\bP(S_{n-1} - \lfloor b_n \rfloor \ge \gep x)\to 0$ if $x/a_n \to+\infty$, so  the second term is $o(n L(x) x^{-(1+\ga)})$.

For the last term in \eqref{threeterms}, we decompose it into two parts,
\[\bP\big( \hat S_n =x, M_n \le \gep x \big) \le \bP\big( \hat S_n=x, M_n \le   c a_n  \big) +   \bP\big( \hat S_n =x, M_n \in ( c a_n , \gep x)  \big) \]
The first part is controlled thanks to the local Fuk-Nagaev inequalities Theorems~\ref{thm:FukNagalpha1}-\eqref{eq:localLD} (or Proposition~\ref{thm:localfuknag} if $\ga\neq 1$): using that $x\ge a_n$ and that $nL(a_n) a_n^{-\ga} \to 1$, we get that 
\begin{equation}
\bP( \hat S_n =x, M_n \le  c a_n) \le \frac{C}{a_n} \Big(  \Big( c' \frac{x}{a_n} \Big)^{- c' x/a_n} +  e^{- c_2 (x/a_n)^2} \Big) \le \frac{c}{x}  \times  e^{- c'' x/a_n},
\label{term3-1}
\end{equation}
 which is negligible compared to \eqref{term1} (or \eqref{eq:localLdoney}) as $x/a_n \to+\infty$.
For the second part, we write
\begin{align}
\bP\big( \hat S_n &  =x, M_n \in[ c a_n,  \gep x ) \big)  = \sum_{j = \log_2 (1/\gep)}^{\log_2 (  c x/a_n ) -1} \bP \Big( \hat S_n  =x, M_n \in [2^{-(j+1)}, 2^{-j}) x \Big) \notag\\
& \le  \sum_{j =\log_2 (1/\gep)}^{\log_2 (c x/a_n ) } \sum_{y\in [2^{-(j+1)}, 2^{-j}) x} n \bP \big( X_1= y \big) \bP \big( S_{n-1} - \lfloor b_n \rfloor =x- y, M_{n-1} \le y \big) \notag\\
& \le C  \sum_{j=\log_2 (1/\gep)}^{\log_2 (c x/a_n ) } n L(2^{-j} x) (2^{-j} x)^{-(1+\alpha)} \bP\Big( S_{n-1} - \lfloor b_n \rfloor \ge x/2 , M_n \le 2^{-j} x \Big)\, ,
\label{decomp:Mn}
\end{align}
where we used \eqref{hyp:localtail1} to bound $\bP(X_1=y)$ uniformly for $y\in [2^{-(j+1)},2^{-j} ) x$.
Then, we use Fuk-Nagaev's inequalities Theorem \ref{thm:fuknag}-Theorem \ref{thm:alpha1}---leave aside the case $\ga=1$ for the moment---to get that (replacing $S_{n-1} - \lfloor b_n \rfloor$ by $S_n-b_n$ for simplicity)
\begin{align*}
\bP\Big( S_n -b_n \ge x/2 , M_n \le 2^{-j} x \Big) &  \le \Big( \frac{c 2^{j} }{n  L(2^{-j} x) (2^{-j } x)^{-\ga} } \Big)^{- 2^{j-2}}  \le \big(  c 2^j \big)^{ - 2^{j-2}} ,
\end{align*}
where we used that $2^{-j} x \ge  a_n$ for the range considered, so $n \bP(X_1 > 2^{-j } x) \le n\bP(X_1>a_n)$ and is bounded  above by a universal constant.
Plugged in \eqref{decomp:Mn}, and using Potter's bound to get that $L(2^{-j} x) \le c 2^j L(x) $ for all $j\ge 1$,
we therefore get that 
\begin{align}
\bP\Big( \hat S_n  =x, M_n \in[ c a_n,  \gep x ) \Big) & \le C n L(x) x^{-(1+\ga)} \sum_{j=\log_2 (1/\gep)}^{\log_2 ( c x/a_n)} 2^{(2+\ga) j} \big(  c 2^j \big)^{ - 2^{j-2}}  \notag \\
& \le c_{\gep} n L(x) x^{-(1+\ga)} , 
\label{term3-2}
\end{align}
where the constant $c_{\gep} $ can be made arbitrarily small by choosing $\gep$ small.
In the case $\ga=1$, Theorem \ref{thm:FukNagalpha1} gives an additional $e^{- c_2 (x/a_n)^2}$ in bounding $\bP\big( S_n -b_n \ge x/2 , M_n \le 2^{-j} x \big)$ for any $j\le \log_2( c x/a_n)$. Hence in \eqref{term3-2} we obtain an additionnal
\[ C n L(x) x^{-(1+\ga)} \sum_{j=1}^{\log_2(c x/a_n)} 2^{(2+\ga) j} e^{- c_2 (x/a_n)^2}\le C n L(x) x^{-(1+\ga)}  \times \Big(\frac{x}{a_n} \Big)^{3+\ga} e^{- c_2 (x/a_n)^2} ,  \]
which is $o(n L(x) x^{-(1+\ga)})$ as $x/a_n \to+\infty$.

\smallskip
In conclusion, combining \eqref{term1}-\eqref{term2}-\eqref{term3-1}-\eqref{term3-2}, we proved that fixing $\gep=1/8$ we get \eqref{eq:localLdoney}.
Assuming additionally \eqref{hyp:doney}, and in view of the remark made after   \eqref{term1}, we obtain that for any $\eta>0$ we can find $\gep>0$ (sufficiently small) such that, if $n$ and $x/a_n$ are large enough,
\[ \bP \big( S_n -b_n =x\big) \le (p+\eta) n L(x) x^{-(1+\ga)} \, .\]
This proves the upper bound part in Theorem~\ref{thm:localDoney}.

To get the lower bound in \eqref{eq:localLdoney2}, assume that $p>0$ (otherwise there is nothing to prove), and write
\begin{align*}
\bP \big( \hat S_n  =x\big) &\ge \bP\Big(\exists \, i \text{ s.t. } X_i \in \big( (1-\gep )x, (1+\gep) x \big) \, ; \, \forall j\neq i\ X_j \le x/2  \, ; \, \hat S_n = x\Big)\\
& = \sum_{y = \lceil (1-\gep)x \rceil }^{ \lfloor (1+\gep)x \rfloor} n \bP(X_1=y) \bP\big(S_{n-1} - \lfloor b_n \rfloor = x-y ; M_{n-1} \le x/2\big) \\
& \ge (1-3\gep) n p L(x) x^{-(1+\ga)} \bP\big(  S_{n-1} - b_n \in [-\gep x, \gep x] ; M_{n-1} \le x/2 \big).
\end{align*}
We used that $ \bP(X_1=y) \ge (1-3\gep) p L(x) x^{-(1+\ga)}$ uniformly for $y\in\big( (1-\gep )x, (1+\gep) x \big) $ and provided that $x$ is large enough,  because of~\eqref{hyp:doney}.
Then, the last probability converges to~$1$ as $n\to +\infty$  because $(S_{n-1} - \lfloor b_n \rfloor )/a_n$ and $M_{n-1}/a_n$ both converge in distribution, and $x/a_n\to +\infty$. Hence we have that for any $\eta>0$, we can find $\gep>0$ (sufficiently small) such that if $n$ and $x/a_n$ are large enough,
\[ \bP \big( S_n - \lfloor b_n \rfloor =x\big) \ge (1-\eta)p n  L(x) x^{-(1+\ga)} \, ,\]
which concludes the proof.\qed

\section{Ladder epochs: proof of Theorems \ref{thm:ladder}-\ref{thm:laddermu0}}
\label{sec:ladder}

To prove Theorem \ref{thm:ladder}, a crucial identity follows from the Wiener-Hopf factorization  (see e.g.\  Theorem 4 in \cite[XII.7]{cf:Feller}). Set $p_k := \bP(T_- >k)$ for every $k\ge 0$: for any $s\in[0,1)$
\begin{equation}
\label{eq:Feller}
p(s):= \sum_{k=0}^{+\infty} p_k s^k = \exp\Big(  \sum_{m=1}^{+\infty} \frac{s^m}{m} \bP(S_m \geq 0) \Big)\, .
\end{equation}

We present the proof in the case $\ga=1$ with infinite mean, \textit{i.e.}\ Theorem~\ref{thm:ladder} (it captures all the ideas needed), and then we adapt the proof to the case $\mu=0$ (\textit{i.e.}\ Theorem~\ref{thm:laddermu0}) in Section~\ref{sec:mu=0}.

\subsection{Preliminaries}
Let us first give the following lemma, which is the core of our proofs.

\begin{lemma}
\label{lem:P<0}
Assume that \eqref{def:tail} holds, with $\ga=1$ and $|\mu|=+\infty$. Recall the definition \eqref{def:ell} of $\ell(\cdot)$ and  \eqref{def:bn} of $b_n$.
If $p>q$, then $b_n\sim (p-q) n \ell(a_n) \to +\infty$ and
\[\bP(S_n < 0) \sim  \frac{q}{p-q} \frac{L( b_n )}{\ell(b_n)} \quad \text{as } n\to+\infty .\]
Moreover we have that
\[\sum_{k=1}^{n} \frac{1}{k} \bP(S_k<0) \sim \frac{q}{p-q} \log \ell(b_n) \quad \text{as } n\to+\infty\, .\]
If $q=0$, we interpret this as $o \big( \log \ell(b_n) \big )$.
The case $p<q$ is symmetric.

\noindent
If $p=q =1/2$, then $b_n = o(n \ell(a_n))$ but if $b_n/a_n \to +\infty$  we  have
\[\bP(S_n < 0) \simn   \frac{ n L(b_n)}{2  b_n} \, , \qquad \sum_{k=1}^{+\infty} k^{-1} \bP(S_k<0) =+\infty \, . \]
\end{lemma}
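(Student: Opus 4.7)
The plan is to derive both asymptotics as consequences of Theorem~\ref{thm:largedev}, combined with the de~Haan identities of Lemma~\ref{lem:ell} and an integral comparison in the spirit of Lemma~\ref{lem:sumell}. First I would check that $b_n/a_n \to +\infty$ when $p>q$: integration by parts in the definition \eqref{def:bn} of $\mu(\cdot)$, together with \eqref{def:tail}, yields $\mu(t) \sim (p-q)\ell(t)$ as $t\to+\infty$, hence $b_n = n\mu(a_n) \sim (p-q) n\ell(a_n)$, and since $a_n \sim n L(a_n)$ with $\ell(x)/L(x) \to +\infty$ this gives $b_n/a_n \sim (p-q)\ell(a_n)/L(a_n) \to +\infty$. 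I would then apply Theorem~\ref{thm:largedev} with $x=b_n$ (which satisfies $x/a_n\to+\infty$, either by the previous step or by assumption in the case $p=q=1/2$) to obtain
\[
\bP(S_n<0) = \bP(S_n - b_n < -b_n) \sim n\, q\, L(b_n)/b_n.
\]
In the case $p=q=1/2$ this is precisely $nL(b_n)/(2b_n)$. For $p>q$, Lemma~\ref{lem:ell} gives $\ell(a_n) \sim \ell(b_n)$, so $b_n \sim (p-q)n\ell(b_n)$ and $n/b_n \sim 1/((p-q)\ell(b_n))$, yielding $\bP(S_n<0) \sim \frac{q}{p-q}\, L(b_n)/\ell(b_n)$; the case $q=0$ follows from the corresponding $o(\cdot)$ variant in Theorem~\ref{thm:largedev}.

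For the sum in the $p>q$ case, combining the pointwise equivalent with $b_k \sim (p-q)k\ell(b_k)$ gives $\frac{1}{k}\bP(S_k<0) \sim q\, L(b_k)/b_k$. Extending $(b_k)$ to a differentiable regularly-varying function of index~$1$ (see Theorem~1.8.2 of \cite{cf:BGT}) and using Lemma~\ref{lem:ell}, one checks that $b'(t) \sim (p-q)\ell(b(t))$. An integral comparison together with the change of variable $u = b(t)$ then gives
\[
\sum_{k=1}^n \frac{L(b_k)}{b_k} \sim \int_1^n \frac{L(b(t))}{b(t)}\dd t = \frac{1}{p-q}\int_{b_1}^{b_n} \frac{L(u)}{u\,\ell(u)}\dd u = \frac{1}{p-q}\log \ell(b_n) + O(1),
\]
where the last equality uses $\frac{d}{du}\log\ell(u) = L(u)/(u\ell(u))$. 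This produces $\sum_{k=1}^n k^{-1}\bP(S_k<0) \sim \frac{q}{p-q}\log\ell(b_n)$; the case $q=0$ is interpreted via the same route.

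For the divergence in the case $p=q=1/2$ with $b_n/a_n \to +\infty$, the pointwise equivalent gives $k^{-1}\bP(S_k<0) \sim L(b_k)/(2b_k)$; since $b_k$ is regularly varying of index~$1$ in~$k$, the sequence $L(b_k)/b_k$ is of the form $\tilde L(k)/k$ with $\tilde L$ positive and slowly varying, so the sum diverges. The main obstacle is the sum estimate for $p>q$: one has to transport the pointwise equivalent into a sum asymptotic while juggling both slowly varying functions $L$ and $\ell$ evaluated at the nontrivial regularly-varying sequence $b_k$. The identity $\ell'(u) = L(u)/u$ (which encodes that $\ell$ is the de~Haan conjugate of $L$), combined with Lemma~\ref{lem:ell}, is what makes the change of variable $u = b(t)$ collapse to the clean integral $\int L(u)/(u\ell(u))\,du = \log\ell(u)$.
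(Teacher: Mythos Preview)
Your treatment of the case $p>q$ matches the paper's proof: apply Theorem~\ref{thm:largedev} at $x=b_n$, use Lemma~\ref{lem:ell} to replace $\ell(a_n)$ by $\ell(b_n)$, and then pass from the pointwise equivalent to the sum via an integral comparison and the change of variable $u=b(t)$, exploiting $\tfrac{d}{du}\log\ell(u)=L(u)/(u\ell(u))$. The paper does this with the auxiliary function $\tilde b_t$ defined by $\partial_t\tilde b_t=\ell(\tilde b_t)$ rather than a smoothed version of $b_t$ itself, but the computations are equivalent.

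There is, however, a genuine gap in your argument for the divergence of $\sum_k k^{-1}\bP(S_k<0)$ when $p=q=1/2$. You write that $L(b_k)/b_k=\tilde L(k)/k$ with $\tilde L$ slowly varying and conclude ``so the sum diverges''. This implication is false in general: $\tilde L(k)=(\log k)^{-2}$ is slowly varying but $\sum_k (k(\log k)^2)^{-1}<\infty$. In the present situation $\tilde L(k)=kL(b_k)/b_k$ could in principle vanish, since $b_k/a_k\to+\infty$ only forces $b_k/k\to+\infty$ when $L$ stays bounded below, and nothing prevents $\tilde L$ from being summable against $1/k$ on the basis of slow variation alone. (Even the claim that $b_k$ is regularly varying of index~$1$ deserves a word of justification here, since when $p=q$ one only knows $b_k=o(k\ell(a_k))$; it does follow from $|\mu(ct)-\mu(t)|=O(L(t))$ together with $\mu(a_n)/L(a_n)\to+\infty$.)

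The paper closes this gap by a comparison rather than a direct computation: since $b_k=o(k\ell(a_k))$, one has $b_k\le k\ell(a_k)$ for large $k$, and as $x\mapsto L(x)/x$ is regularly varying of index~$-1$, Potter's bound gives $L(b_k)/b_k\ge c\,L(k\ell(a_k))/(k\ell(a_k))$. The right-hand side is exactly the summand that was shown to diverge in the $p>q$ computation (there $b_k\sim(p-q)k\ell(a_k)$), so the divergence follows. You should replace the ``slowly varying, hence divergent'' step by this comparison.
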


\begin{proof}
First, when $p>q$, we get thanks to Theorem \ref{thm:largedev} (since $b_n/a_n \to +\infty$) that
\begin{equation}
\label{eq:<0}
\bP(S_n < 0) = \bP(S_n - b_n < -b_n) \sim n q L(b_n) b_n^{-1} \sim \frac{q}{p-q} \, \frac{L(b_n)}{ \ell(b_n)} \quad \text{as } n\to+\infty \, .
\end{equation}
We used that $b_n  = n \mu(a_n)$ with $\mu(a_n)\sim (p-q) \ell(a_n) \sim (p-q) \ell(b_n)$ for the last part (see Lemma~\ref{lem:ell}).
The first asymptotic equivalence remains true as soon as ${b_n/a_n \to +\infty}$.

Then, it remains to estimate the sum $\sum_{k=1}^{n} k^{-1}\bP(S_k<0)$ or, because of \eqref{eq:<0}, of $ q \sum_{k=1}^n L(b_k)/b_k$.
By comparing with an integral, and using the function $\tilde b_t$ such  that $b_k\sim (p-q) \tilde b_k$ and $\partial_t \tilde b_t = \ell(\tilde b_t) $
(cf.~\eqref{def:btilde} below, we may assume that we work with differentiable function, see \cite[Thm.~1.8.2]{cf:BGT}), we obtain that 
\[\sum_{k=1}^n \frac{L(b_k)}{b_k}  \simn \int_{t=1}^n \frac{L(\tilde b_t)}{ \tilde b_t} \dd t = \frac{1}{p-q} \int_{u=1}^{\tilde b_n} \frac{L(u)}{u \ell(u)} \dd u \simn \frac{1}{p-q} \log \ell( b_n) \, .\]
We used a change of variables $u = \tilde b_t$, $\dd u = \ell(\tilde b_t) \dd t$, and then Lemma~\ref{lem:sumell}-(i) (with $f(t)=1/t$) for the last identity.

In the case where $p=q =1/2$ (so $b_k=o(k \ell(a_k))$) we use that according to \eqref{eq:<0} and provided that $b_n/a_n \to+\infty$, there is a constant $c>0$ such that 
\[k^{-1}\bP(S_k < 0) \ge c \frac{L(b_k)}{b_k}  \ge c' \frac{L(k \ell(a_k) )}{ k\ell(a_k)} \, .\]
And we  proved  just above that $\sum_{k=1}^{+\infty}\frac{L(k \ell(a_k) )}{ k\ell(a_k)} = +\infty $.
\end{proof}

A simple consequence of Lemma \ref{lem:P<0} is Proposition \ref{prop:drift} (in the case $|\mu|=+\infty$, the case $\mu=0$ being treated in Lemma~\ref{lem:P<0bis} below), thanks to \cite[XII.7~Thm.~2]{cf:Feller}.
Indeed, we get that $\sum_{k\ge 1} k^{-1} \bP(S_k <0) =+\infty$ as soon as $q\ne 0$: if $p>q$ or $p=q$ with $b_n/a_n\to +\infty$, this is directly  Lemma \ref{lem:P<0}; if $\sup_n  |b_n|/a_n <+\infty$, then this is just a consequence of the convergence in distribution of $(S_n-b_n)/a_n$ to get that $\bP(S_k <0) =\bP\big( (S_k-b_k)/a_k < -b_k/a_k\big)$ is uniformly bounded away from $0$, so that $\sum_{k\ge 1} k^{-1} \bP(S_k<0) =+\infty$; the general case when $p=q=1/2$ can be dealt with similarly, by observing as above that there is a constant $c$ such that $k^{-1}\bP(S_k<0) \ge c \frac{L(k \ell(a_k) )}{ k\ell(a_k)}$.

\subsection{The case $\lim_{n\to+\infty} b_n/a_n =b$}
\label{sec:pqbalanced}
This is case (i) in Theorem~\ref{thm:ladder}, which is standard, cf.\ Rogozin \cite{cf:Rog}. 
The sequence $\bP(S_k>0) =\bP( (S_k-b_k)/a_k > -b_k/a_k)$ converges to $\bP(Y> -b)$, where $Y$ is the limit in distribution of $(S_n-b_n)/a_n$, that is a symmetric Cauchy($1/2$) distribution ($p=q=1/2$), and $b=\lim_{n\to+\infty} b_n/a_n$.
Note that we could also characterize $b_n/a_n$ by $ \bE\big[\frac{X_1}{1+(X_1/a_n)^2} \big]$, see \cite[Thm.~8.3.1]{cf:BGT}.
We therefore get that 
\[ \lim_{n\to+\infty} \frac{1}{n} \sum_{k=1}^n \bP(S_k>0) = \bP(Y> -b) =:\rho \in(0,1),  \]
with $\rho = \frac12 +\frac{1}{\pi} \arctan ( 2b/\pi )$. This is Spitzer's condition:
from \cite[Thm.~8.9.12]{cf:BGT}, $T_-$ is in the domain of attraction of a positive stable random variable with index $\rho$, and (i) follows.

\subsection{The case $p<q$}
\label{sec:p<q}

We  first prove the weak result with the $o(1)$ in the exponent, and then turn to the precise statement under assumptions V1-V2.

\subsubsection*{General Case}
Denote
\begin{equation}
\label{def:f}
f(s) = \sum_{m=1}^{+\infty} \frac{s^m}{m} \bP(S_m\ge 0)\, ,
\qquad f'(s) = \sum_{m=0}^{+\infty} s^{m} \bP(S_{m+1}\ge 0)\, .
\end{equation}
We are able to obtain the behavior of $f(s)$ and $f'(s)$ as $s\uparrow1$. For $p<q$, Lemma \ref{lem:P<0} gives 
\begin{equation}
\sum_{k=1}^n k^{-1}\bP(S_k\ge 0) \simn \frac{p}{q-p} \log \ell(|b_n|)\, ; \quad \sum_{k=1}^n \bP(S_k\ge 0) \simn \frac{p}{q-p} \frac{n  L(|b_n|) }{\ell(|b_n|) }\, ,
\end{equation}
 (because $\bP(S_k \ge 0) \sim \frac{p}{q-p} L(|b_k|)/\ell(|b_k|)$). Therefore, Corollary 1.7.3 in \cite{cf:BGT} gives that 
 \begin{equation}
 \label{eq:ff'1}
 f(s) \sim  \frac{p}{q-p} \log \ell(|b_{1/(1-s)}|) \, ; \quad f'(s) \sim \frac{p}{q-p} \frac{1}{1-s} \frac{L(|b_{1/(1-s)}|)}{ \ell(|b_{1/(1-s)}|)}  \qquad \text{ as } s\uparrow 1 \, .
 \end{equation}

The identity  \eqref{eq:Feller} gives that $p(s) = e^{f(s)}$ for any $s\in[0,1)$ so that $p'(s) = f'(s) e^{f(s)}$: the estimates \eqref{eq:ff'1} allows us to derive that
\begin{equation}
\label{eq:p'}
p'(s) =  \frac{ L(|b_{1/(1-s)}|) }{1-s} \Big( \ell \big( |b_{1/(1-s)}| \big) \Big)^{ \frac{p}{q-p}-1+o(1)} \quad \text{as } s\uparrow 1 \, ,
\end{equation}
where the term $p/(q-p)$ has been absorbed in $\ell (|b_{1/(1-s)}|)^{o(1)}$
From this we would like to conclude that $\sum_{k=1}^n k p_k  =  n L(|b_n|) \big( \ell(|b_n|)\big)^{\frac{p}{q-p}-1 +o(1)}$, but we cannot directly apply Corollary~1.7.3 in \cite{cf:BGT} since we do not have a proper asymptotic equivalence. We therefore prove it directly.

\noindent
{\bf Upper bound.}
First, take $s=1-1/n$ in \eqref{eq:p'}, so that we get, as $n\to+\infty$
\begin{equation}
\sum_{k=0}^{+\infty} kp_k \Big(1-\frac{1}{n}\Big)^{k-1}  = n L(|b_n|) \big( \ell(|b_n|) \big)^{\frac{p}{q-p}-1 +o(1)}\, .
\end{equation}
Then, using that $p_k$ is non-increasing, we can write that
\[
\sum_{k=0}^{+\infty} kp_k \Big(1-\frac{1}{n}\Big)^{k-1} \ge \sum_{k=0}^{n} k p_n \Big(1-\frac{1}{n}\Big)^{n} \ge  c\,  n^2 p_n ,
\]
and we therefore get the upper bound
\begin{equation}
\label{eq:upperp}
p_n \le c \frac{1}{n} L(b_n) \big( \ell(b_n) \big)^{\frac{p}{q-p}-1 +o(1)}  = \frac{L(|b_n|) }{n} \big( \ell(|b_n|) \big)^{\frac{p}{q-p}-1 +o(1)} \, . 
\end{equation}

\noindent
{\bf Lower bound.}
The lower bound is a bit trickier. Let $\gep>0$, and define $t_n := \ell(|b_n|)^{\gep} \to +\infty$ as $n\to+\infty$. Setting $s= 1- 1/(nt_n) $ in \eqref{eq:p'}, we get that for $n$ sufficiently large
\begin{align}
\sum_{k=0}^{+\infty} k p_k \Big( 1- \frac{1}{n t_n}\Big)^{k-1} &=   n t_n L(|b_{n/t_n}|) \big(\ell(|b_{n/ t_n}|) \big)^{\frac{p}{q-p}-1 +o(1)} \notag\\
& \ge  n t_n^{1/2} L(|b_{n}|) \big(\ell(|b_{n}|) \big)^{\frac{p}{q-p}-1 +o(1)} .
\label{eq:lower1}
\end{align}
For the second inequality, we used that $|b_{n/t_n}| \ge t_n^{-2} |b_n|$ for $n$ large enough (since $b_k$ is regularly varying with index $-1$), and then
 Potter's bound to get that $L(t_n^{-2} | b_n| ) \le t_n^{-1/4} L(|b_n|)$ and $\ell(t_n^{-2} |b_n|)^{\frac{p}{q-p}-1 +o(1)} \le t_n^{-1/4}$ for $n$ large enough.

Now we may write, since $p_k$ is non-increasing,
\begin{equation}
\label{eq:twosums}
\sum_{k=0}^{+\infty} kp_k \Big(1-\frac{1}{n t_n}\Big)^{k-1} \le \sum_{k=0}^n k p_k + p_n \sum_{k=n+1}^{+\infty} k \Big(1-\frac{1}{nt_n} \Big)^{k-1} 
\end{equation}
For the first sum, we get thanks to \eqref{eq:upperp} that 
\[\sum_{k=0}^n k p_k  \le c \, n L(|b_n|) \big( \ell(|b_n|) \big)^{\frac{p}{q-p}-1 +o(1)}   = o(1)  n t_n^{1/2} L(|b_n|) \big( \ell(|b_n|) \big)^{\frac{p}{q-p}-1 +o(1)}  ,\]
where we used the definition of $t_n$, which is such that $ \ell(|b_n|)^{o(1)}= o(t_n^{1/2})$.
Now we have
$\sum_{k\ge 1}k  s^{k-1} = (1-s)^{-2}$, so that the second term in \eqref{eq:twosums} is bounded by  $ p_n (n t_n)^2$.
Hence, plugging \eqref{eq:twosums} (and the subsequent estimates) in \eqref{eq:lower1}, we obtain that
\[
n t_n^{1/2} L(|b_{n}|) \big(\ell(|b_{n}|) \big)^{\frac{p}{q-p}-1 +o(1)} \le o\Big(  n t_n^{1/2} L(b_n) \big( \ell(|b_n|) \big)^{\frac{p}{q-p}-1 +o(1)} \Big) +  p_n (nt_n)^2,
\]
so that we conclude that
\[
p_n \ge \frac{c}{n t_n^{3/2}} L(|b_n|) \big( \ell(|b_n|) \big)^{\frac{p}{q-p}-1 +o(1)}  \, .
\]
Recalling that $t_n =\ell(|b_n|)^{\gep}$, and since $\gep>0$ is arbitrary, we get that
\begin{equation}
p_n \ge \frac{L(|b_n|) }{n} \big( \ell(|b_n|) \big)^{\frac{p}{q-p}-1 +o(1)}  \, . 
\end{equation}

\subsubsection*{Under assumption V1-V2.}

Let us first introduce some notations. We construct $\tilde b_t$ an analytic function such that its derivative is given by $\ell(\tilde b_t)$.
Define
\begin{equation}
\label{def:btilde}
H(x) = \Big( \int_1^{t} \frac{{\rm d} x}{ \ell(x)} \Big)^{-1} \, , \quad \text{and } \quad \tilde b_t = H^{-1}(1/t)\, .
\end{equation}

Then, it is easy to verify that $H'(x) = -H(x)/\ell(x)$, so that  $\partial_t \tilde b_t =\ell(\tilde b_t)$ (using also that  $H(\tilde b_t) =1/t$).
Notice that we also have easily that  $H(x) \sim \ell(x)/x$ as $x\to+\infty$, so that $\tilde b_t \sim t \ell(\tilde b_t)$ as $t\to+\infty$. Thanks to Lemma \ref{lem:ell} we get that $\tilde b_t \sim t \ell(a_t)$ as $t\to+\infty$, and therefore $b_n \sim -(q-p) \tilde b_n$.
Let us define
\[g(s):= \log \ell\Big( \tilde b_{\frac{1}{1-s}} \Big) \, .\]
Using that $\partial_t \tilde b_t =\ell(\tilde b_t)$, we get that
\[g'(s) = \frac{1}{(1-s)^2} \frac{L\big( \tilde b_{\frac{1}{1-s}} \big)}{ \tilde b_{\frac{1}{1-s}} }  \stackrel{s\uparrow 1}\sim  \frac{1}{1-s} \frac{L\big( \tilde b_{\frac{1}{1-s}} \big)}{ \ell\big(\tilde b_{\frac{1}{1-s}} \big) } \, .\]
Since $L(\cdot)$ satisfies V1-V2, so does $ L\big( \tilde b_{\frac{1}{1-s}} \big) \big/ \ell\big(\tilde b_{\frac{1}{1-s}} \big)$, and we may apply Theorem~5 in \cite{cf:FO} to get that $g'(s) =\sum_{n=0}^{+\infty} a_n s^n$ with $a_n  \sim L(\tilde b_n) /\ell(\tilde b_n)$ as $n\to+\infty$. We end up with
\begin{equation}
g(s):= \log \ell\Big( \tilde b_{\frac{1}{1-s}} \Big) =\sum_{n=1}^{+\infty} g_n s^n \quad \text{with } g_n \sim \frac{L(\tilde b_n) }{n\ell(\tilde b_n)} \sim \frac{q-p}{p}  \frac{\bP(S_n \ge 0)}{n} \, .
\end{equation}
In view of \eqref{def:f}, we get that
\begin{equation}
f(s) = \frac{p}{q-p} g(s) + \sum_{n=1}^{+\infty} v_n s^n \quad \text{with } v_n = o \Big( \frac{L(\tilde b_n) }{n\ell(\tilde b_n)}\Big),
\end{equation}
so that
\begin{equation}
p'(s) = f'(s) \ell\big( \tilde b_{\frac{1}{1-s}} \big)^{\frac{p}{q-p}} \psi\big(\tfrac{1}{1-s} \big) \quad \text{with } \psi\big(\tfrac{1}{1-s} \big)  :=\exp\Big(\sum_{n=1}^{+\infty} v_n s^n  \Big) \, .
\end{equation}

\begin{lemma}
\label{lem:slowlyvar}
There exists some slowly varying function $\tilde L(\cdot)$ such that
\[\psi\big(\tfrac{1}{1-s} \big)  = \tilde L \Big( \ell\big( \tilde b_{\frac{1}{1-s}} \big) \Big)\, .\]
\end{lemma}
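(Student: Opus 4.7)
The plan is to define $\tilde L$ via a change of variable from $s$ to $u := \ell(\tilde b_{1/(1-s)})$ and then verify the hypotheses of Karamata's representation theorem. Since $\tilde b_t$ is strictly increasing in $t$ (indeed $\partial_t \tilde b_t = \ell(\tilde b_t) > 0$) and $\ell$ is strictly increasing (derivative $L(x)/x > 0$), the map $s \mapsto u$ is a $C^1$ bijection from some interval $[s_0,1)$ onto $[u_0,+\infty)$, for $s_0<1$ chosen large enough; let $s(u)$ denote its inverse. I would set
\[
\tilde L(u) := \exp\bigl( \phi(s(u)) \bigr) \quad \text{for } u \ge u_0 , \qquad \phi(s) := \sum_{n \ge 1} v_n s^n ,
\]
extending $\tilde L$ as a positive constant on $[0,u_0)$, so that by construction $\psi(1/(1-s)) = \tilde L\bigl( \ell(\tilde b_{1/(1-s)}) \bigr)$ for $s \in [s_0,1)$.

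The crucial step is the Abel-type comparison $\phi'(s) = o(g'(s))$ as $s \to 1^-$. Since $|v_n| = o(g_n)$ with $g_n \sim L(\tilde b_n)/(n\ell(\tilde b_n)) \ge 0$ for large $n$, we have $|n v_n| = o(n g_n)$ with $n g_n$ eventually non-negative; combined with $g'(s) \sim (1-s)^{-1} L(\tilde b_{1/(1-s)})/\ell(\tilde b_{1/(1-s)}) \to +\infty$, a standard splitting argument gives $|\phi'(s)|/g'(s) \to 0$ as $s \to 1^-$.

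Differentiating the identity $g(s) = \log u$ yields $du/ds = g'(s)\,u$, so by the chain rule
\[
u \, (\log \tilde L)'(u) \;=\; \frac{\phi'(s(u))}{g'(s(u))} \;\xrightarrow[u \to +\infty]{}\; 0 .
\]
Setting $\gep(v) := v\,(\log \tilde L)'(v)$, we thus obtain $\tilde L(u) = \tilde L(u_0) \exp\bigl( \int_{u_0}^u \gep(v)\,v^{-1}\,\dd v \bigr)$ with $\gep(v) \to 0$, which by Karamata's representation theorem \cite[Thm.~1.3.1]{cf:BGT} concludes that $\tilde L$ is slowly varying. The only real obstacle is the Abel-type estimate: for any fixed $\eta > 0$, choosing $N$ large so that $|n v_n| \le \eta\, n g_n$ for $n \ge N$ gives $|\phi'(s)| \le \sum_{n<N} |n v_n| + \eta\, g'(s)$, and dividing by $g'(s) \to +\infty$ yields $\limsup_{s\to 1^-} |\phi'(s)|/g'(s) \le \eta$, so the limit is $0$ since $\eta$ is arbitrary.
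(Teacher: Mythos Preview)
Your proof is correct and takes a genuinely different route from the paper's. The paper verifies the definition of slow variation directly: it sets $Q(t)=\ell(\tilde b_t)$, $R=Q^{-1}$, and shows $\psi(R(ct))/\psi(R(t))\to 1$ by splitting $\sum_n v_n\big[(1-1/R(ct))^n-(1-1/R(t))^n\big]$ into three ranges of $n$ (small, intermediate, large) and bounding each piece by hand. You instead use the differential characterisation: the change of variable $u=e^{g(s)}$ converts the question into showing $u(\log\tilde L)'(u)=\phi'(s)/g'(s)\to 0$, which you obtain by an elementary Abel-type comparison of the power series $\phi'$ and $g'$, and then invoke Karamata's representation.

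Your approach is cleaner and shorter; it avoids the case analysis entirely by exploiting that $g(s)=\log u$ makes the chain rule land exactly on the ratio $\phi'/g'$. Two small points worth making explicit in a write-up: first, the bound $\sum_{n\ge N} n g_n s^{n-1}\le g'(s)$ requires $g_n\ge 0$ for $n<N$, which is not a priori clear from Flajolet--Odlyzko; but since only finitely many terms are involved one gets $\sum_{n\ge N}n g_n s^{n-1}\le g'(s)+C_N$, and the argument goes through unchanged after dividing by $g'(s)\to+\infty$. Second, that $g'(s)\to+\infty$ follows because $(1-s)^{-1}L(\tilde b_{1/(1-s)})/\ell(\tilde b_{1/(1-s)})$ is regularly varying of index $1$ in the variable $t=1/(1-s)$. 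The paper's more explicit splitting has the advantage of not relying on differentiability of the slowly varying pieces, but in the present setting (power series, analytic $\tilde b_t$) your argument is the more economical one.
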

With this lemma in hand, we get that $p'(s)$ is regularly varying with index $-1$, 
\[p'(s) \sim \frac{L(\tilde b_{\frac{1}{1-s}})}{1-s} \ell\Big( \tilde b_{\frac{1}{1-s}} \Big)^{\frac{p}{q-p}-1} \tilde L \Big( \ell\Big( \tilde b_{\frac{1}{1-s}} \Big) \Big) \qquad \text{as } s\uparrow 1\, ,\]
so that by Corollary 1.7.3 in \cite{cf:BGT} we get that
\[ \sum_{k=1}^n k p_k \simn   n L\big( \tilde b_n \big)  \ell \big( \tilde b_n\big)^{\frac{p}{q-p}-1} \tilde L \big(  \ell \big( \tilde b_n\big) \big)  \, . \]
The result follows by using the monotonicity of $p_n$ (and the fact that $|b_n| \sim (q-p)\tilde b_n$).

\begin{proof}[Proof of Lemma \ref{lem:slowlyvar}]
Set $Q(t) = \ell( \tilde b_t)$ for simplicity, which is an increasing function. We want to show that $\tilde L(t):=\psi\big( Q^{-1}(t)\big)$ is slowly varying as $t\to+\infty$ ($t=(1-s)^{-1}$), \textit{i.e.}\ denoting $R(t)=Q^{-1}(t)$ for simplicity ($R(t)$ is increasing and $R(t)\to +\infty$ as $t\to+\infty$), we need to show that for any $c>1$,
\[\frac{\psi \big(R(ct) \big)}{\psi\big(R(t)\big)}  = \exp \left( \sum_{n=1}^{+\infty} v_n \bigg[ \Big( 1-\frac{1}{R(ct)}\Big)^n - \Big( 1-\frac{1}{R(t)}\Big)^n \bigg] \right) \to 1 \quad \text{as } t\to+\infty\, .\]
Since $v_n= o\big(L(\tilde b_n) /\tilde b_n\big)$, we write $v_n =\gep_n L(\tilde b_n) /\tilde b_n$ with $\gep_n \to 0$. In order to show that the sum in the exponential goes to $0$ as $t\to+\infty$, we split it into three parts.

{\it Part 1.}
For $n\le R(t)$ we use that
\[  \Big( 1-\frac{1}{R(ct)}\Big)^n - \Big( 1-\frac{1}{R(t)}\Big)^n\le  1- \Big( 1-\frac{n}{R(t)}\Big) = \frac{n}{R(t)}.\]
Hence the sum up to $n = R(t)$ is bounded by $ \sum_{n=1}^{R(t)} n v_n /R(t)$, and since $n v_n  \sim \gep_n  L(\tilde b_n) / \ell(\tilde b_n)$ goes to $0$ as $n\to+\infty$ (recall that $L(n)/\ell(n) \to 0$), we get that this first part goes to $0$ as $t\to +\infty$.

{\it Part 2.}
For $ R(t) < n \le R(2ct)$, we simply bound the sum by
\begin{equation}
\sum_{n= R(t)}^{R(2ct)} v_n \le \sup_{n \ge R(t)} \gep_n   \times \int_{R(t)}^{R(2ct)} \frac{L(\tilde b_u)}{\tilde b_u} {\rm d} u \, .
\end{equation}
Using that $Q(t) = \ell(\tilde b_t)$ we have that $Q'(t) = L(\tilde b_t) \ell(\tilde b_t) /\tilde b_t$, so that the integral is exactly $\big [   \log Q(s) \big]_{R(t)}^{R(2ct)} =\log 2c $ (recall that $R=Q^{-1}$). 
Since $\gep_n \to 0$, this second part also goes to $0$ as $t\to+\infty$.

{\it Part 3.}
For $n> R(2ct)$, we bound the sum by
\begin{align*}
\sum_{k=1}^{+\infty} &\sum_{n=R(2^k ct)}^{R(2^{k+1} ct)} \gep_n \frac{L(\tilde b_n)}{\tilde b_n} \Big( 1- \frac{1}{R(ct)}\Big)^{R(2^k ct)} \\
&\le \sup_{n \ge Q^{-1}(t)} \gep_n   \times \sum_{k=1}^{+\infty}  e^{ - R(2^k ct)/R(ct) } \int_{R(2^k ct)}^{R(2^{k+1} ct)} \frac{L(\tilde b_u)}{\tilde b_u} {\rm d} u \,.
\end{align*}
As above, the integral is equal to $\log 2$. Moreover, since $Q(\cdot)$ is slowly varying we get that for $t$ large enough (recall $R=Q^{-1}$), by Potter's bound, for any $k\ge 1$
\[Q \big( 2^{k} R(ct) \big) \le 2^k Q\big(R(c t) \big) = Q\big( R (2^k ct)\big)\, ,\]
giving that $2^{k} R(ct) \le R(2^k ct)$ since $Q$ is increasing. Hence, for $t$ large enough the third part is bounded by 
\[ \sup_{n \ge R(t)} \gep_n  \times \log 2 \sum_{k\ge 1} e^{-2^k}\, ,\]
which goes to $0$ as $t\to+\infty$.
\end{proof}

\subsection{The case $p>q$}
\label{sec:p>q}
This case is similar to the case $p<q$. We only prove the general case (with the $o(1)$ in the exponent), the improvement under assumption V1-V2 being identical to what is done above.

 Using the same definition of $f(s)$, and writing $\bP(S_m \ge 0) = 1- \bP(S_m < 0)$, we get that
\begin{equation}
\label{def:h}
f(s) = \sum_{m=1}^{+\infty} \frac{s^m}{m} \bP(S_m \ge 0) = \log\Big(\frac{1}{1-s} \Big) - h(s) \quad \text{with } h(s) = \sum_{m=1}^{+\infty} \frac{s^m}{m} \bP(S_m < 0)\, .
\end{equation}
Then, Lemma \ref{lem:P<0} gives that $\sum_{k=1}^{n} k^{-1} \bP(S_k < 0) \simn \frac{q}{p-q} \log \ell(b_n)$, and Corollary~1.7.3 in \cite{cf:BGT} gives that $h(s) \sim \frac{q}{p-q} \log \ell(b_{1/(1-s)})$ as $s \uparrow 1$.

Hence, we conclude thanks to \eqref{eq:Feller} that 
\[p(s) = \frac{1}{1-s} e^{-h(s)}  = \frac{1}{1-s} \Big( \ell(b_{1/(1-s)}) \Big)^{ - q/(p-q)+o(1)} \qquad \text{as } s\uparrow 1\, ,\]
and we  deduce from this the behavior of $p_n$ in the same way as above.

\noindent
{\bf Upper bound.}
Taking $s=1-1/n$, and using that $p_k$ is non-increasing, we get that
\[ n \big( \ell(b_{n}) \big)^{ - q/(p-q)+o(1)}  = \sum_{k=0}^{+\infty} p_k \Big( 1-\frac1n\Big)^k \ge  \sum_{k=0}^n p_n \Big( 1-\frac1n\Big)^k \ge c n p_n  ,\]
and therefore
\begin{equation}
p_n \le  c^{-1}\,  \big( \ell(b_{n}) \big)^{- q/(p-q)+o(1)} =   \big( \ell(b_{n}) \big)^{ -q/(p-q)+o(1)} \, .
\label{eq:upperp2}\, .
\end{equation}

\noindent
{\bf Lower bound.}
As above, we fix $\gep>0$ and define $t_n = \ell(b_n)^{\gep}$.
Taking $s=1-1/(nt_n)$, we get as in \eqref{eq:lower1} that for $n$ large enough
\begin{equation}
\label{eq:lower2}
 \sum_{k=0}^{+\infty} p_k \Big( 1-\frac{1}{nt_n}\Big)^{k} = nt_n \big( \ell(b_{nt_n}) \big)^{ -q/(p-q)+o(1)} \ge nt_n^{1/2} \big( \ell(b_{n}) \big)^{ q/(p-q)+o(1)}\, .
\end{equation}
On the other hand, as in \eqref{eq:twosums}, since $p_k$ is non-increasing we have
\begin{align*}
 \sum_{k=0}^{+\infty} p_k \Big( 1-\frac{1}{nt_n}\Big)^{k} & \le \sum_{k=0}^n p_k + p_n \sum_{k\ge n+1} \Big( 1-\frac{1}{nt_n}\Big)^{k} \\
 &= o\Big( nt_n^{1/2} \big( \ell(b_{n}) \big)^{ -q/(p-q)+o(1)} \Big) + nt_n p_n\, ,
\end{align*}
where we used \eqref{eq:upperp2} for the first term, together with the fact that $\ell(b_n)^{o(1)} =o(t_n^{1/2})$, and a standard computation for the second term.
Combining this with \eqref{eq:lower2} we get that
\[p_n \ge t_n^{-1/2} \big( \ell(b_{n}) \big)^{ -q/(p-q)+o(1)},\]
and since $t_n= \ell(b_n)^{\gep}$ with $\gep>0$ arbitrary, we get that
$p_n \ge   \ell(b_{n})^{ -q/(p-q)+o(1)}$.

\subsection{Further remarks on the case $p=q$}
\label{sec:p=q}

When $p=q =1/2$, then $b_n = o(n\ell(a_n))$. If $\lim_{n\to +\infty} b_n/a_n =b$, then Theorem \ref{thm:ladder} gives the correct asymptotic for $\bP(T_- >n)$.
In the case $\lim_{n\to +\infty} b_n /a_n =+\infty$ (the case where the limit is $-\infty$ is symmetric), then we still have as in \eqref{eq:<0} that
\begin{equation}
\bP(S_n < 0) \sim \frac12  n L(b_n) (b_n)^{-1} \quad \text{as } n\to+\infty.
\end{equation}
Since $b_n$ is regularly varying with exponent $-1$, we get that
\begin{equation}
\label{def:r}
r(n) := \sum_{k=1}^n \frac{1}{k} \bP(S_k<0)  \simn \sum_{k=1}^{n} \frac{L(b_n)}{ 2b_n}\, 
\end{equation}
is slowly varying.
Additionally, we get that $ r(n) = o(\log n) $ (since $\bP(S_n<0)\to 0$), and  also $r(n) \gg \log(\ell(a_n))$, in view of Lemma \ref{lem:P<0}, since $b_n = o(n \ell(a_n))$. We therefore have   
\[ \ell(a_n)^{1/o(1)} \le e^{r(n)} \le n^{o(1)} \, .\]
Then, the same scheme of proof as above gives the behavior of $p(s)$ and $p'(s)$ and in turns those of $p_n$: we get that
\begin{equation}
\label{eq:T-T+}
\bP(T_- >n ) =  e^{(1+o(1)) r(n)}  \quad \text{ and } \quad  \bP(T_+ >n) = \frac{L(b_n)}{ b_n} e^{(1+o(1)) r(n)}\, .
\end{equation}
Details are straightforward, and we do not develop further.
%
%

\subsection{Case $\ga=1$, $\mu=0$: proof of Theorem~\ref{thm:laddermu0}}
\label{sec:mu=0}

First of all, let us state the analogous of Lemma~\ref{lem:P<0} in the case $\ga=1$, $\mu=0$.
\begin{lemma}
\label{lem:P<0bis}
Assume that \eqref{def:tail} holds, with $\ga=1$ and $\mu=0$. Recall the definition \eqref{def:ell} of $\ell^\star(\cdot)$ and  \eqref{def:bn} of $b_n$.
 Then, if $p>q$, $b_n\sim  - (p-q) n \ell^\star(a_n) \to -\infty$ and
\[\bP(S_n > 0) \sim  \frac{p}{p-q} \frac{L( |b_n| )}{\ell^\star(|b_n|)} \quad \text{as } n\to +\infty\, .\]
Moreover we have that (recall $\ell^{\star}(n)\to 0$)
\[\sum_{k=1}^{n} \frac{1}{k} \bP(S_k > 0) \simn - \frac{p}{p-q} \log  \ell^\star(|b_n|) \quad \text{as } n\to +\infty \, .\]
The case $p<q$ is symmetric.
%
\end{lemma}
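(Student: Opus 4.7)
The plan is to mirror the proof of Lemma~\ref{lem:P<0}, replacing $\ell(\cdot)$ by $\ell^\star(\cdot)$ throughout and using the asymptotic expansion of $\mu(t)$ at infinity rather than the asymptotic of $\mu(t)$ itself. I will treat only $p>q$; the case $p<q$ then follows by applying the result to $(-S_n)_{n\ge 0}$.

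First I would pin down the asymptotic of $b_n$. The paper already records that in the finite-mean case $\mu-\mu(t)\sim (p-q)\ell^\star(t)$ as $t\to+\infty$; with $\mu=0$ this gives $\mu(a_n)\sim -(p-q)\ell^\star(a_n)$, and hence $b_n = n\mu(a_n)\sim -(p-q) n\ell^\star(a_n)$. Since $a_n\sim nL(a_n)$ and $\ell^\star(t)/L(t)\to+\infty$, we have $|b_n|/a_n\to +\infty$, so in particular $b_n\to -\infty$. Now Theorem~\ref{thm:largedev} applies and yields
\[ \bP(S_n>0) = \bP(S_n - b_n > |b_n|) \sim n p\, L(|b_n|)\,|b_n|^{-1}. \]
Combining with $\ell^\star(a_n)\sim \ell^\star(|b_n|)$ (the remark after Lemma~\ref{lem:ell}), hence $|b_n|\sim (p-q) n\ell^\star(|b_n|)$, yields the first asymptotic $\bP(S_n>0)\sim \frac{p}{p-q}\, L(|b_n|)/\ell^\star(|b_n|)$.

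For the sum, the summand $k^{-1}\bP(S_k>0)$ is regularly varying of index $-1$ by the first claim, so it can be compared with the corresponding integral. I would introduce a smooth increasing $\tilde\beta(t)$ with $\tilde\beta(t)\sim |b_t|/(p-q)$ and $\tilde\beta'(t)\sim \ell^\star(\tilde\beta(t))$ (built exactly as $\tilde b_t$ in \eqref{def:btilde}, with $\ell$ replaced by $\ell^\star$), and change variables via $u=\tilde\beta(t)$, $du\sim \ell^\star(u)\,dt$, $t\sim u/\ell^\star(u)$, which reduces the sum to $\frac{p}{p-q}\int^{|b_n|} \frac{L(u)}{u\,\ell^\star(u)}\,du$. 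Since $(\ell^\star)'(u)=-L(u)/u$, this last integral evaluates to $-\log\ell^\star(|b_n|)+O(1)$, and the $O(1)$ is absorbed because $-\log\ell^\star(|b_n|)\to+\infty$. This delivers the second asymptotic.

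The only mildly delicate step will be the integral--sum comparison, which requires the summand to be asymptotically monotone; this is standard since it is regularly varying of negative index, and is handled exactly as in the proof of Lemma~\ref{lem:P<0}.
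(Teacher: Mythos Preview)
Your proposal is correct and follows essentially the same route as the paper, which simply says the proof is identical to that of Lemma~\ref{lem:P<0} with Lemma~\ref{lem:sumell}-(ii) replacing Lemma~\ref{lem:sumell}-(i). The only cosmetic difference is that in the last step you evaluate $\int^{|b_n|} \frac{L(u)}{u\,\ell^\star(u)}\,du$ directly via $(\ell^\star)'(u)=-L(u)/u$, whereas the paper would phrase this as an application of Lemma~\ref{lem:sumell}-(ii) with $f(t)=1/t$; these are the same computation.
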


We skip the proof here since it is identical to that of Lemma~\ref{lem:P<0}, using Lemma~\ref{lem:sumell}-(ii) in place of  Lemma~\ref{lem:sumell}-(i).
We now turn to the proof of Theorem~\ref{thm:ladder2}, which is very similar to that of Theorem~\ref{thm:ladder}, the key identity being the Wiener-Hopf factorization \eqref{eq:Feller}.

\smallskip
\textit{(i) The case $p>q$.}
Denoting $f(s)$ as in \eqref{def:f}, Lemma~\ref{lem:P<0bis} and Corollary~1.7.3 in \cite{cf:BGT} give analogously to \eqref{eq:ff'1} 
 \begin{equation}
 \label{eq:ff'2}
 f(s) \sim  \frac{- p}{p-q} \log \ell^\star(|b_{1/(1-s)}|) \, , \quad f'(s) \sim  \frac{p}{p-q} \frac{1}{1-s} \frac{L(|b_{1/(1-s)}|)}{ \ell^\star(|b_{1/(1-s)}|)}  \qquad \text{ as } s\uparrow 1 \, .
 \end{equation}
Therefore,
\[p'(s) = f'(s) e^{f(s)} =  \frac{ L(|b_{1/(1-s)}|) }{1-s} \Big( \ell^\star\big (|b_{1/(1-s)}| \big) \Big)^{ -\frac{p}{p-q} -1 +o(1)} \quad \text{as } s\uparrow 1 \, ,\]
which can be turned into $p_n = n^{-1} L(|b_n|)  (\ell^\star(b_n))^{-1- \frac{p}{p-q} +o(1)}$ as done in Section~\ref{sec:p<q}. Again, as in Section~\ref{sec:p<q}, the term $\ell^\star(|b_n|)^{o(1)}$ can be replaced by $\tilde L (\ell^\star (|b_n|))$ for some slowly varying function $\tilde L$, under the assumption V1-V2.

\smallskip
\textit{(ii) The case $p<q$.}
Here also, the method of Section~\ref{sec:p>q} is easily adapted. Denote $h(s)$ as in \eqref{def:h}. Then Lemma~\ref{lem:P<0bis} gives that $\sum_{k=1}^n k^{-1} \bP(S_k <0)\sim - \frac{q}{q-p} \log \ell^{\star}(b_n)$ as $n\to+\infty$, so that  Corollary~1.7.3 in \cite{cf:BGT}  gives that $h(s) \sim  -\frac{q}{q-p} \log \ell^\star(b_{1/(1-s)})$ as $s\uparrow1$. Therefore, we get
\[ p(s) =\frac{e^{-h(s)}}{1-s} = \frac{1}{1-s} \Big( \ell^\star(b_{1/(1-s)}) \Big)^{q/(q-p) + o(1)} \, ,\] 
which can be turned into $p_n =  (\ell^\star(b_n))^{q/(q-p) +o(1)}$ as done in Section~\ref{sec:p>q}. Here again, $\ell^\star(|b_n|)^{o(1)}$ can be replaced by $\bar L (\ell^\star (|b_n|))$ for some slowly varying function $\bar L$, under the assumption V1-V2.

\smallskip
\textit{(iii) The case $p=q$.} In the case $\lim_{n\to+\infty} b_n/a_n =b\in\mathbb{R}$, the proof is identical to that of Section~\ref{sec:pqbalanced}: we get that
\[ \lim_{n\to+\infty} \frac{1}{n} \sum_{k=1}^n \bP(S_k>0) = \bP(Y> -b) =:\rho \in(0,1),  \]
with $\rho = \frac12 +\frac{1}{\pi} \arctan ( 2b/\pi )$. This is Spitzer's condition, and \cite[Thm.~8.9.12]{cf:BGT} implies that $T_-$ is in the domain of attraction of a positive stable random variable with index $\rho$, so that there exists a slowly varying function $\varphi(\cdot)$ such that
\begin{equation}
\label{eq:pqmu0}
\bP(T_- >n) \simn \gp(n) n^{-\rho}.
\end{equation}

The case $\lim_{n\to+\infty} b_n/a_n = +\infty$ (or $-\infty$) can be treated similarly to Section~\ref{sec:p=q}: we get the same conclusion as in  \eqref{eq:T-T+}.

\section{Renewal theorems: proof of Theorems \ref{thm:renewcentered}-\ref{thm:renewal}-\ref{thm:renewalfinite}}
\label{sec:renewals}

\subsection{The case $\ga=1$ with infinite mean}

\subsubsection{The case $\lim_{n\to+\infty} b_n/a_n =b \in \bbR$}
Recall that we have $p=q=1/2$ in that case.
Let us set $k_x$ an integer such that $a_{k_x}\sim x$. We fix $\gep>0$ such that $\sup_n |b_n|/a_n \le 1/\sqrt{\gep}$, and we write
\begin{align}
\label{eq:splitG}
G(x) := \Big(  \sum_{k \le \frac 1\gep k_x}  + \sum_{k> \tfrac1\gep k_x}  \Big) \bP(S_k =x).
\end{align}
The first term is estimated thanks to Theorem \ref{thm:localLD}, which gives that there is a constant $C$ such that for any $x\ge 1$ and any $k$, $\bP(S_k =x) \le C (a_k)^{-1}  k L(x)x^{-1}$, so that 
\begin{equation}
\label{eq:partG1}
\sum_{k \le  \frac1 \gep k_x} \bP(S_k=x)  \le C' \Big(\frac{k_x}{\gep} \Big)^2  (a_{k_x/\gep})^{-1} L(x) x^{-1} \le  \frac{C''}{ \gep}  L(x)^{-1} .
\end{equation}
where we used that $a_{k_x/\gep} \sim \gep^{-1} a_{k_x}$ and that $k_x \sim a_{k_x} L(a_{k_x})^{-1} \sim x L(x)^{-1}$.

The second term is in fact the main one.
Thanks to the local limit theorem \eqref{eq:LLT}, for any $\eta>0$ there is some $k_0$ such that for any $k\ge k_0$,
\[ g\big( (x- b_k)/a_k \big) - \eta \le a_k \bP(S_k =x) \le   g\big( (x-b_k)/a_k \big) +\eta \, .\]
Then for any $\gep>0$ fixed, $|(x- b_k)/a_k -b| \le |x/a_k| + |b_{k}/a_k -b|\le 5\gep$, provided that  $k > \gep^{-1} k_x$ and $x$ is large enough. 
Indeed, since $a_k$ is regularly varying with index $-1$, for $k> \gep^{-1}k_x$ we have $a_{k} \ge \frac12 \gep^{-1} a_{k_x} \ge x/(4\gep)$. Since $|b_k/a_k -b| \le \gep$ for $k$ large enough, we obtain the above claim. Therefore, by continuity of $g$ we get that, provided  $\gep$ is small and $x$ is large enough
\[ g( -b ) - 2\eta \le a_k \bP(S_k =x) \le   g( -b ) +2\eta \qquad \text{ for all } k\ge  \gep^{-1} k_x .\]
Since we are in the symmetric case, with $p=q=1/2$, and by our definition \eqref{def:an} of $a_n$, $g(\cdot)$ is the density of a symmetric Cauchy($1/2$) distribution, so that $g(-b) =\frac{2}{\pi(1+(2b)^2)}$.

Hence, for any $\eta'>0$ and provided that $\gep$ is small enough and $x$ large enough (so that $\gep^{-1} k_x$ is large), the second sum in \eqref{eq:splitG} is
\[\frac{2(1-\eta')}{\pi(1+(2b)^2) } \sum_{k>\frac1\gep k_x} \frac{1}{a_k} \le\sum_{k> \tfrac1\gep k_x}  \bP(S_k =x)  \le  \frac{2(1+\eta')}{\pi(1+(2b)^2) } \sum_{k>\frac1\gep k_x} \frac{1}{a_k} .\]
We then estimate the last sum thanks to a comparison with the following (convergent) integral
\[ \int_{ k_x/\gep}^{+\infty} \frac{dt}{a_t}  \sim \int_{a_{ k_x/\gep}}^{+\infty} \frac{du}{uL(u)}  \sim \int_{x/\gep}^{+\infty} \frac{du}{u L(u)}\quad \text{ as } x\to+\infty. \]
We used a change of variable $u=a_t$ so that $t\sim u/L(u)$ (see \eqref{def:an}) and $dt= L(u)^{-1}du$, and then used that $a_{ k_x/\gep } \sim \gep^{-1} a_{k_x} \sim \gep^{-1} x$.
Since $v\mapsto \int_{v}^{+\infty} \frac{du}{uL(u)}$ is a  slowly varying function (vanishing as $v\to+\infty$), we get that for  $\gep>0$ small enough  and $x$ large enough  (how large depends on $\gep$)
\begin{equation}
\label{eq:partG2}
\frac{2 (1-2\eta')}{\pi(1+(2b)^2)} \sum_{n>x} \frac{1}{nL(n)} \le \sum_{k> \tfrac1\gep k_x}  \bP(S_k =x) \le  \frac{2 (1+2\eta')}{\pi(1+(2b)^2)} \sum_{n>x} \frac{1}{nL(n)} .
\end{equation}

In conclusion, combining \eqref{eq:partG1} and \eqref{eq:partG2}, and since $ L(x)^{-1}=o\big(\sum_{n>x} \frac{1}{nL(n)}  \big)$ and $\eta'$ is arbitrary, we get that
\[G(x) \sim \frac{2}{\pi(1+(2b)^2)}   \sum_{n>x} \frac{1}{n L(n)} \quad \text{as } x\to+\infty\, .\]

\subsubsection{The case $p>q$}
Let  $k_x$ to be a solution of $b_{k_x} = k_x\mu(a_{k_x}) =x$ (in the following, we assume for simplicity of notation that $k_x$ is an integer).
Then we identify the range of $k$'s for which we may apply the local limit theorem \eqref{eq:LLT} to $\bP(S_k =x)$: they are the $k$'s such that $x- b_k$ is of order $a_k$,  and we find that they are in the range $k = k_x + \Theta \big( a_{k_x} /\mu(a_{k_x}) \big)$.
Let us mention the results of \cite{cf:AA,cf:HR} where this heuristic is confirmed: if $N_x$ the number of renewals before reaching $x$, it is shown that $(a_{k_x}/\mu(a_{k_x}))^{-1}(N_x-k_x) $ converges in distribution. 

Let us stress right away that $\mu(a_{k_x}) \sim \mu(b_{k_x})= \mu(x)$. Indeed, since $p>q$ we have that $\mu(x) \sim (p-q) \ell(x)$, and Lemma~\ref{lem:ell}  gives that $\ell(a_n) \sim \ell(b_n)$.

We fix $\gep>0$ and decompose $G(x)$ into five sums
\begin{align}
\label{eq:splitG5}
G(x) &= \Big( \sum_{k < \tfrac12  k_x} + \sum_{k =\tfrac12 k_x}^{k_x-  \frac{a_{k_x}}{\gep \mu(x)}}  + \sum_{k=k_x- \frac{a_{k_x}}{\gep \mu(x)} }^{k_x + \frac{a_{k_x}}{\gep \mu(x)} } + \sum_{k_x +\frac{a_{k_x}}{\gep \mu(x)} }^{2k_x} + \sum_{k>2k_x} \Big) \bP(S_k=x) \notag\\
 &=:  \quad \ {\rm I}  \ \quad + \quad {\rm  II}\quad\  +\ \quad  {\rm III}  \quad\ \ +\quad {\rm   IV}\quad  +\quad {\rm  V} \, .
\end{align}
The main contributions are the sums III and V, so we start by estimating those  terms.
\smallskip

{\bf Term III.}
By the local limit theorem \eqref{eq:LLT}, we get that as $x\to +\infty$ (so $k_x\to +\infty$)
\begin{align*}
{\rm III } &  = (1+o(1))   \sum_{k =  k_x - \frac{a_{k_x}}{\gep \mu(x)} }^{k_x+ \frac{a_{k_x}}{\gep \mu(x)} }  \frac{1}{a_{k}}   g\Big(  \frac{x- k\mu(a_k)}{a_k}\big) \, .
\end{align*}
Then, we use the fact that $a_{k_x}$ is negligible compared to $b_{k_x} =k_x \mu(a_{k_x}) \sim k_x \mu(x) $: we get that uniformly for the $k$'s in the range considered, we have $k=(1+o(1))k_x$ so that $a_k  =(1+o(1)) a_{k_x}$. Setting $j=k-k_x$, we also have that for the range of $k$ considered (using also $\mu(a_{k_x})\sim \mu(x)$), since $x=k_x \mu(a_{k_x})$
\begin{equation}
\label{eq:x-bk}
\frac{x-k \mu(a_k)}{a_{k}} = (1+o(1)) \frac{j \mu(x)}{a_{k_x}} + \frac{k_{x}  \big( \mu(a_{k_x}) - \mu(a_k) \big) }{(1+o(1)) a_{k_x}}  =(1+o(1)) \frac{j \mu(x)}{a_{k_x}} +o(1). 
\end{equation}
For the second identity, we used Claim \ref{claim:mu} to get that $|\mu(a_{k_x}) - \mu(a_k)| = o(L(a_{k_x})) = o(a_{k_x}/k_x)$.
In the end, and since $g$ is continuous, we get that
\begin{align}
{\rm  III } & = (1+o(1))  \sum_{j= - \frac{a_{k_x}}{\gep \mu(x)}}^{ \frac{a_{k_x}}{\gep \mu(x)}} \frac{1}{a_{k_x}} g\Big( j \times  \frac{\mu(x)}{a_{k_x}}\Big) = \frac{1+o(1)}{\mu(x)} \int_{-1/\gep}^{1/\gep} g(u) du  \, ,
\end{align}
where we used a Riemann sum approximation in the last identity.
Since $\int_{-\infty}^{+\infty} g(u) du =1$, we then get that for any $\eta>0$ we can choose $\gep>0$ such that for all sufficiently large $x$ (how large depend on $\gep$)
\begin{equation}
\label{termIII}
\frac{1-\eta}{\mu(x)}  \le {\rm  III } \le \frac{1+\eta}{\mu(x)} \, .
\end{equation}

\smallskip
{\bf Term V.}
For the last term in \eqref{eq:splitG5}, we use \eqref{hyp:localtail2}: Theorem \ref{thm:localDoney} gives that for $k\ge 2 k_x$
\[\bP(S_k=x) = \bP(S_k -b_k = x - b_k) \le C k L(b_k) b_k^{-(1+\ga)} , \]
where we used that $b_k \ge b_{2k_x} \ge \frac32 b_{k_x} = \frac32 x $ provided that $x$ is large enough, so that $|x-b_k| \ge  \frac12 b_k \gg a_k$.
Then we get that (we have $\ga=1$)
\begin{equation}
\label{termVa}
{\rm V} \le C  \int_{k_x}^{+\infty} \frac{u  L(b_u)}{b_u^2} {\rm d} u \le C' \int_{x}^{+\infty}  \frac{L(t)}{t \ell(t)^2} {\rm d} t  = \frac{C'}{\ell(x)} \, ,
\end{equation}
where  we used a change of variable $t=b_u \sim (p-q) u \ell(b_u)$ (using also $dt \sim  (p-q)\ell(b_u) du$ and $b_{k_x}=x$), and then Lemma~\ref{lem:sumell}-(i).

If one has additionally that $\bP(X_1=-x) \sim q L(x) x^{-2}$, then we write for any  $\gep \in (0,1/2)$
\[{\rm V} =\sum_{k=2k_x}^{ \frac1\gep k_x} \bP(S_k=x) + \sum_{k > \frac1\gep k_x } \bP(S_k=x)  \, .\]
As above, the first term is comparable to $\int_{2k_x}^{ k_x/\gep}  \frac{u  L(b_u)}{b_u^2} {\rm d} u\le C \int_{x}^{ x/\gep}  \frac{  L(t)}{t \ell(t)^2} {\rm d} t$, which is $o(1/\ell(x))$ because of Lemma \ref{lem:sumell} and since $\ell(x)$ is slowly varying (so  $\ell( x/\gep) \sim \ell(x)$).
For the second term, we use Theorem \ref{thm:localDoney} which gives that 
for any $\eta>0$, and provided that $\gep$ is fixed small enough and that $x$ is large enough, we have for all $k\ge \gep^{-1} k_x$ (so $x= b_{k_x} \le \frac{1}{2 \gep} b_k$)
\[ \bP(S_k =x) = \bP(S_{k}- b_k = x- b_k) 
\begin{cases}
\le (q+\eta) k L(b_k) b_k^{-2}\, , \\
\ge (q-\eta) k L(b_k) b_k^{-2}\, .
\end{cases}
\]
Moreover,  by a change of variable $t = b_u$ ($dt \sim (p-q)\ell(b_u) du$) we get
\begin{equation}
\label{restsum}
\int_{ k_x/\gep}^{+\infty} \frac{u L(b_u) }{b_u^2} {\rm d} u \sim  \int_{ x/\gep}^{+\infty}  \frac{1}{(p-q)^2} \frac{L(t) }{t \ell(t)^2}  {\rm d} t  =  \frac{1}{(p-q)^2} \frac{1}{\ell( x/\gep)} \sim \frac{1}{(p-q) \mu(x)} ,
\end{equation}
where we used that $b_{ k_x/\gep} \sim  b_{k_x}/\gep = x/\gep$ and that $\ell(\cdot)$ is slowly varying, with $\mu(x)\sim (p-q) \ell(x)$.
In the end, and since $\eta$ is arbitrary, we get that as $x\to+\infty$,
\begin{equation}
\label{termV}
{\rm V} = (1+o(1)) \frac{q}{p-q} \frac{1}{\mu(x)} \, .
\end{equation}

To conclude the proof of the statement, we need to show that the terms I, II and IV are negligible compared to $1/\mu(x)$ (or $1/\ell(x)$).

\smallskip
{\bf Term I.} Thanks to \eqref{hyp:localtail1} and Theorem \ref{thm:localDoney}, we obtain that there is a constant $C>0$ such that for any $k\le  k_x/2$ (so that $x \ge \frac23 b_{k} \gg a_k$ provided that $x$  is large engouh)  we have $\bP(S_k =x) \le C  k L(x) x^{-2}$.
Then the first term in \eqref{eq:splitG5} is bounded by a constant times
\begin{equation}
\label{termI}
 \sum_{k=1}^{ k_x /2 } k L(x) x^{-2} \le  k_x^2 L(x) x^{-2} \le \frac{c}{\ell(x)} \frac{L(x)}{\ell(x)}.
\end{equation}
For the last inequality, we used that $k_x\sim \frac{1}{p-q} x\ell(x)^{-1}$ as $x\to+\infty$. (Indeed $x=b_{k_x} \sim (p-q) k_x \ell(b_{k_x})$.)
Now, because $L(x)/\ell(x) \to 0$, we get that ${\rm I} = o(1/\ell(x))$.

\smallskip
{\bf Term II.}
We set $j=k_x-k$. Then, the range of $k$ considered corresponds to $j \in \big[ \frac{a_{k_x}}{\gep \mu(x)}, \frac12 k_x \big]$, and for that range we have similarly to \eqref{eq:x-bk}
\begin{equation}
\label{eq:x-bk2}
\frac{x-b_k}{a_k} =  \frac{k_x \mu(a_{k_x})  - (k_x -j)\mu(a_{k_x -j}) }{ a_{k}} \ge \frac{ k_x  }{ a_{k}} \big| \mu(a_{k_x}) - \mu(a_{k_x -j})  \big| + j \frac{ \mu(x)}{2 a_{k}} \, .
\end{equation}
We used that $\mu(a_{k_x -j} ) \sim \mu(a_{k_x}) \sim \mu(x)$ (since $j\le k_x/2$ and $\mu(\cdot)$ is slowly varying).
Then, we may use Claim \ref{claim:mu} to get that 
\[| \mu(a_{k_x}) - \mu(a_{k_x -j})| \le c L(a_{k_x}) \log \big(a_{k_x} / a_{k_x-j} \big) \le c' L(a_{k_x})  j/k_x  \, ,\]
where in the second inequality we used that there is a constant $c>0$ such that uniformly for $j\in[k/2,k]$, $a_k /a_{k-j} \le 1 + c j /k$ (using  that $a_k$ is regularly varying with index $1$). Plugging this in \eqref{eq:x-bk2}, and since $L(a_{k_x}) = o(\mu(a_{k_x})) = o(\mu(x))$, we get that
\begin{equation}
\label{eq:boundx-bk}
x-b_k \ge \frac14 j \mu(x) , \qquad \text{ for } j := k_x-k\in \Big[ \frac{a_{k_x}}{\gep \mu(x)} , \frac12 k_x \Big] \, .
\end{equation}
Then, since  we have here $j \mu(x) \ge \gep^{-1} a_{k_x}$ we get that $x-b_k \ge \frac14 j\mu(x) \ge a_k$ ($k\ge k_x/2$), so that we may apply Theorem \ref{thm:localDoney}.
We therefore obtain, for $j:=k_x-k \in \big[ \frac{a_{k_x}}{\gep \mu(x)}, \frac12 k_x \big]$
\begin{align}
\bP(S_k=x) = \bP(S_k -b_k = x-b_k)& \le C k L(x-b_k) (x-b_k)^{-2}\notag \\
&  \le C k_x   L( j \mu(x) ) (j \mu(x))^{-2}\, .
\label{applyDoney}
\end{align}
Overall, we get that 
\begin{align*}
{\rm  II} & = \sum_{k=\frac12 k_x}^{ k_x - \frac{a_{k_x}}{\gep \mu(x)}}  \bP(S_k= x) \le C k_x \int_{\frac{a_{k_x}}{\gep \mu(x)}}^{ \frac12  k_x}   \frac{ L(j \mu(x) ) }{  (j \mu(x) )^{2} }  {\rm d}j\\
 & \le C  \frac{ k_x}{\mu(x)} \int_{  \gep^{-1} a_{k_x} }^{+\infty} \frac{L(t )}{ t^2} {\rm d}t \le C \frac{k_x}{\mu(x)} \frac{L( \gep^{-1}a_{k_x})}{ \gep^{-1} a_{k_x}} \, ,
\end{align*}
where in the second inequality we made a change of variable $t = j\mu(x)$.
Now, since $L(\cdot)$ is slowly varying, and because of the definition \eqref{def:an} of $a_n$, we get that $k_x L(\gep^{-1} a_{k_x}) /a_{k_x} \to 1$ as $x\to+\infty$.
Hence, there exists a constant $C>0$ (independent of $\gep$) such that for $x$ sufficiently large (how large depends on $\gep$) we have
\begin{equation}
\label{termII}
{\rm II } \le \frac{C \gep}{\mu(x)} \, .
\end{equation}

\smallskip
{\bf Term IV.} It is treated similarly to the term II.
Setting $j= k -k_x$, one gets exactly as in \eqref{eq:boundx-bk} that provided that $x$ is large enough,
\begin{equation}
\label{eq:boundx-bk2}
x-b_k \le  - \frac14 j \mu(x)  \, , \qquad \text{ for } j := k-k_x\in \Big[  \frac{a_{k_x}}{\gep\mu(x)} , 2k_x \Big] \, .
\end{equation}
Then, we may apply Theorem~\ref{thm:localDoney}  (we have that $|x-b_k| \ge a_k$ for the range considered) to get analogously to \eqref{applyDoney} that there is some $C$ such that for any $j:=k-k_x \in \big[  \frac{a_{k_x}}{\gep\mu(x)} , 2k_x \big]$
\[\bP(S_k = x) = \bP(S_k -b_k = x-b_k)  \le C k_x L(j \mu(x)) \big( j\mu(x) \big)^2\,. \]
Therefore, we can bound the term IV
\begin{align*}
{\rm IV} = \sum_{k=k_x + \frac{a_{k_x}}{\gep\mu(x)} }^{2k_x} \bP(S_k=x) \le C k_x \sum_{j=\frac{a_{k_x}}{\gep\mu(x)}}^{2 k_x} \frac{L(j \mu(x))}{ \big( j \mu(x) \big)^2} \, .
\end{align*}
Then as for the term II, we get that there is a constant $C$ (independent of $\gep$) such that for $x$ sufficiently large 
\begin{equation}
\label{termIV}
{\rm IV} \le \frac{C \gep}{\mu(x)} \, .
\end{equation}
 
\noindent
{\bf Conclusion.}
Assuming \eqref{hyp:localtail1}-\eqref{hyp:localtail2}, we get from  \eqref{termIII}-\eqref{termVa} and \eqref{termI}-\eqref{termII}-\eqref{termIV} that  there is a constant $C$ such that 
$G(x) \le C/\mu(x)$
holds for $x$ sufficiently large.

If we additionally assume that $\bP(X_1 = -x) \sim q L(x) x^{-1}$ as $x\to+\infty$, we can use \eqref{termV} instead of \eqref{termVa}.
According to \eqref{termIII}-\eqref{termII}-\eqref{termIV} and to \eqref{termV}, we find that for every $\eta>0$, we can choose $\gep>0$ small enough, such that for $x$ sufficiently large we get 
\begin{equation}
\frac{1- \eta}{\mu(x)} + (1+o(1))\frac{q}{p-q} \frac{1}{\mu(x)}  \le G(x)  \le \frac{1+ 3\eta}{\mu(x)} +  (1+o(1))\frac{q}{p-q}  \frac{1}{\mu(x)} .
\end{equation}
Since $\eta>0$ is arbitrary, we get \eqref{eq:renewalthm}.

\subsubsection{The case $p<q$}

Here, we have that $b_k =k \mu(a_k) \to -\infty$ as a regularly varying function, since $\mu(k)\sim (p-q) \ell(k)$.
We define  $k_x$ to verify $b_{k_x} =-x$.
Let us fix $\gep>0$, and split the sum in $G(x)$ into two parts this time.
\begin{equation}
\label{spliG3}
G(x) = \Big( \sum_{k=1}^{ \frac 1\gep k_x} + \sum_{k> \frac 1\gep k_x} \Big) \bP(S_k=x) =: {\rm I} + {\rm II}.
\end{equation}

\noindent
{\bf Term I.} 
Since $x-b_k \ge -b_k= |b_k|$ with $|b_k|\ge a_k$, we may  use \eqref{hyp:localtail1} to get thanks to Theorem~\ref{thm:localDoney} that
\begin{equation}
\label{eq:boundDoney}
\bP(S_k= x) = \bP(S_k - b_k = x-b_k) \le C  k \frac{L(x -b_k)}{(x-b_k)^2}\, .
\end{equation}
Then, since $x-b_k\ge x$ (except possibly for finitely many $k$'s for which $b_k$ is positive), we get that
$\bP(S_k=x) \le C k L(x) x^{-2}$ for all $k\le \gep^{-1} k_x$ (and $k$ larger than a constant).
Hence, we get that the term I in \eqref{spliG3} is bounded by
\begin{equation}
{\rm I} \le C   \big(k_x/\gep\big)^2 L(x)x^{-2}  \leq C'  \gep^{-2}\frac{L(x)}{ \ell(x)^2}\, ,
\end{equation}
where we used for the second inequality that $k_x \sim \frac{1}{q-p} x\ell(x)^{-1}$ (as for~\eqref{termI}).
Then, since $L(x)/\ell(x) \to 0$ as $x\to +\infty$, we get that
${\rm I} = o(1/\ell(x))$.

\smallskip
\noindent
{\bf Term II.}
Again, \eqref{eq:boundDoney} is valid. Here, we use that $x-b_k \ge |b_k|$ to get that $\bP(S_k =x) \le C k L(|b_k|) |b_k|^2$.
Then, we obtain that
\begin{equation}
{\rm II} \le C \sum_{k\ge \frac1\gep k_x} k \frac{ L(|b_k|) }{|b_k|^{2}} \le \frac{C}{\ell(x )} 
\end{equation}
where we used the same calculation as in \eqref{restsum}, with $|b_k| \sim (q-p) k \ell(|b_k|)$. 

Hence we proved that also in that case there is some $C>0$ such that $G(x) \le C/|\mu(x)|$ (since $|\mu(x)| \sim -(q-p)\ell(x) $). 

Let us now obtain the right asymptotic equivalence, assuming \eqref{hyp:doney}.  We may use Theorem~\ref{thm:localDoney} to obtain that for any $\eta>0$, we can choose $\gep>0$ so that if $x$ is large enough we get for any $k\ge \gep^{-1} k_x$ we have $|b_k| \le x-b_{k} \le (1+2\gep) |b_k|$ and then
\[
\bP(S_k =x) = \bP(S_k =b_k = x-b_k) 
\begin{cases}
\le (p+\eta) k L(|b_k|) |b_k|^{-2}  \, , \\
\ge (p-\eta) k L(|b_k|) |b_k|^{-2} \, .
\end{cases}
\]
The same calculation as in \eqref{restsum} then gives that
\begin{equation}
\sum_{k> \frac1\gep k_x} \frac{k L(|b_k|)}{|b_k|^2} \sim \frac{1}{(q-p) |\mu(x)|},
\end{equation}
using that $|b_k| \sim (q-p) k \ell(|b_k|)$ and $|\mu(x)| \sim (q-p) \ell(x)$.
Since $\eta$ is arbitrary, we get that as $x\to+\infty$
\begin{equation}
{\rm V}= \sum_{k>\gep^{-1} k_x} \bP(S_k =x) \stackrel{x\to+\infty}{\sim} \frac{p}{q-p} \frac{1}{|\mu(x)|} \, .
\end{equation}

\subsection{The finite mean case: proof of Theorem~\ref{thm:renewalfinite}}
\label{sec:finitemeanrenewal}

We only have to treat the case $\mu<0$. Recall that $S^* = \sup_{i \ge 0} S_i$, and for $x>0$ let $\tau_x= \inf\{n\geq 1, S_n\geq x\}$.
Then, using Markov's property, we have that
\begin{align*}
G(x) = \sum_{k=1}^{+\infty} \sum_{y=x}^{+\infty} \bP(\tau_x =k, S_{\tau_x} =y) G(y,x)
\end{align*}
with $G(y,x)=\sum_{n=0}^{+\infty} \bP(y+S_n =x) =G(x-y)$. The renewal theorem (Theorem~\ref{thm:renewalfinite}-(i)) gives that $G(x-y) \to 1/|\mu|$ as $x-y \to -\infty$, and also $G(y-x)\le C$ for all $y\ge x$.
Then, we take some $\gep_x>0$ with $\gep_x\to 0$ and $x\gep_x \to+\infty$, and we split the sum according to whether $y\in [x,(1+\gep_x)x]$ or $y>(1+\gep_x)x$. The first term is
\begin{align*}
\sum_{k=1}^{+\infty} \sum_{y\in [x, (1+\gep_x) x]} \bP(\tau_x =k, S_{\tau_x} =y) G(x-y) & \le C\sum_{k=1}^{+\infty} \sum_{y\in [x, (1+\gep_x) x]} \bP(\tau_x =k, S_{\tau_x} =y) \\
& = C \bP\big( S^* \in [x,(1+\gep)x]\big)\,.
\end{align*}
The second term deals with $y>(1+\gep_x)x$, so $x-y\to -\infty$: it is
\begin{align*}
\sum_{k=1}^{+\infty} \sum_{y> (1+\gep_x) x} \bP(\tau_x =k, S_{\tau_x} =y) G(x-y) & = \sum_{k=1}^{+\infty}  \sum_{y> (1+\gep_x) x} \bP(\tau_x =k, S_{\tau_x} =y) \frac{1+o(1)}{|\mu|}\\
& = \frac{1+o(1)}{|\mu|}\bP\big( S^* > (1+\gep_x) x\big)\,.
\end{align*}
As a consequence, we get that
\begin{equation}
G(x) = (1+o(1)) \frac{1}{|\mu|}\bP\big( S^* >  x\big) + O\Big( \bP(S^*>x) - \bP\big( S^* >(1+\gep_x)x\big)\Big).
\end{equation}
Since $\bP(S^* >x)$ is subexponential, the second term is negligible compared to the first one.\qed

\bigskip
\noindent
{\bf Acknowledgements:} I am most grateful to Vitali Wachtel for his comments and his suggestions for the improvement of Theorem \ref{thm:ladder}, and also to I. Kortchemski and L. Richier for attracting my attention to some subtleties of Theorem \ref{thm:laddermu0} (and to their article \cite{cf:KR}).
I~thank the referees for their remarks and suggestions, in particular for pointing out references (and an elementary proof) for Theorem~\ref{thm:renewalfinite}.
I am grateful to Thomas Duquesne and Francesco Caravenna for the many discussions we had on this (and related) topic(s), and I also thank Cyril Marzouk for telling me about the reference~\cite{cf:BCM}.

\end{document}